%% LyX 2.3.6.2 created this file.  For more info, see http://www.lyx.org/.
%% Do not edit unless you really know what you are doing.
\documentclass[reqno,english]{amsart}
\usepackage[T1]{fontenc}
\usepackage[latin9]{inputenc}
\usepackage{geometry}
\geometry{verbose,lmargin=3.5cm,rmargin=3.5cm}
\setcounter{secnumdepth}{2}
\setcounter{tocdepth}{2}
\usepackage{babel}
\usepackage{array}
\usepackage{refstyle}
\usepackage{float}
\usepackage{mathrsfs}
\usepackage{mathtools}
\usepackage{enumitem}
\usepackage{multirow}
\usepackage{amstext}
\usepackage{amsthm}
\usepackage{amssymb}
\usepackage{graphicx}
\usepackage[all]{xy}
\PassOptionsToPackage{normalem}{ulem}
\usepackage{ulem}
\usepackage[unicode=true,pdfusetitle,
 bookmarks=true,bookmarksnumbered=false,bookmarksopen=false,
 breaklinks=false,pdfborder={0 0 0},pdfborderstyle={},backref=false,colorlinks=false]
 {hyperref}
\hypersetup{
 colorlinks=true,citecolor=blue,linkcolor=blue,linktocpage=true}

\makeatletter

%%%%%%%%%%%%%%%%%%%%%%%%%%%%%% LyX specific LaTeX commands.

\AtBeginDocument{\providecommand\secref[1]{\ref{sec:#1}}}
\AtBeginDocument{\providecommand\defref[1]{\ref{def:#1}}}
\AtBeginDocument{\providecommand\lemref[1]{\ref{lem:#1}}}
\AtBeginDocument{\providecommand\thmref[1]{\ref{thm:#1}}}
\AtBeginDocument{\providecommand\corref[1]{\ref{cor:#1}}}
\AtBeginDocument{\providecommand\remref[1]{\ref{rem:#1}}}
\AtBeginDocument{\providecommand\figref[1]{\ref{fig:#1}}}
\AtBeginDocument{\providecommand\exaref[1]{\ref{exa:#1}}}
\AtBeginDocument{\providecommand\subsecref[1]{\ref{subsec:#1}}}
\newcommand{\lyxmathsym}[1]{\ifmmode\begingroup\def\b@ld{bold}
  \text{\ifx\math@version\b@ld\bfseries\fi#1}\endgroup\else#1\fi}

%% Because html converters don't know tabularnewline
\providecommand{\tabularnewline}{\\}
\RS@ifundefined{subsecref}
  {\newref{subsec}{name = \RSsectxt}}
  {}
\RS@ifundefined{thmref}
  {\def\RSthmtxt{theorem~}\newref{thm}{name = \RSthmtxt}}
  {}
\RS@ifundefined{lemref}
  {\def\RSlemtxt{lemma~}\newref{lem}{name = \RSlemtxt}}
  {}

%%%%%%%%%%%%%%%%%%%%%%%%%%%%%% Textclass specific LaTeX commands.
\numberwithin{equation}{section}
\numberwithin{figure}{section}
\numberwithin{table}{section}
      % auxiliary length 
\theoremstyle{plain}
\newtheorem{thm}{\protect\theoremname}[section]
\theoremstyle{definition}
\newtheorem{defn}[thm]{\protect\definitionname}
\theoremstyle{plain}
\newtheorem{lem}[thm]{\protect\lemmaname}
\theoremstyle{remark}
\newtheorem{rem}[thm]{\protect\remarkname}
\theoremstyle{plain}
\newtheorem{cor}[thm]{\protect\corollaryname}
\theoremstyle{definition}
\newtheorem{example}[thm]{\protect\examplename}
\theoremstyle{plain}
\newtheorem{prop}[thm]{\protect\propositionname}

%%%%%%%%%%%%%%%%%%%%%%%%%%%%%% User specified LaTeX commands.

\providecommand{\MR}[1]{}

\usepackage{bbm}
\allowdisplaybreaks
\usepackage{needspace}
\usepackage{refstyle}

\newref{lem}{refcmd={Lemma \ref{#1}}}
\newref{thm}{refcmd={Theorem \ref{#1}}}
\newref{cor}{refcmd={Corollary \ref{#1}}}
\newref{sec}{refcmd={Section \ref{#1}}}
\newref{subsec}{refcmd={Section \ref{#1}}}
\newref{chap}{refcmd={Chapter \ref{#1}}}
\newref{prop}{refcmd={Proposition \ref{#1}}}
\newref{exa}{refcmd={Example \ref{#1}}}
\newref{tab}{refcmd={Table \ref{#1}}}
\newref{rem}{refcmd={Remark \ref{#1}}}
\newref{def}{refcmd={Definition \ref{#1}}}
\newref{fig}{refcmd={Figure \ref{#1}}}
\newref{claim}{refcmd={Claim \ref{#1}}}

\@ifundefined{showcaptionsetup}{}{%
 \PassOptionsToPackage{caption=false}{subfig}}
\usepackage{subfig}
\AtBeginDocument{
  
}

\makeatother

\providecommand{\corollaryname}{Corollary}
\providecommand{\definitionname}{Definition}
\providecommand{\examplename}{Example}
\providecommand{\lemmaname}{Lemma}
\providecommand{\propositionname}{Proposition}
\providecommand{\remarkname}{Remark}
\providecommand{\theoremname}{Theorem}

\begin{document}
\title[]{Dual pairs of operators, harmonic analysis of singular non-atomic
measures and Krein-Feller diffusion}
\author{Palle E.T. Jorgensen}
\address{(Palle E.T. Jorgensen) Department of Mathematics, The University of
Iowa, Iowa City, IA 52242-1419, U.S.A.}
\email{palle-jorgensen@uiowa.edu}
\author{James Tian}
\address{(James F. Tian) Mathematical Reviews, 416 4th Street Ann Arbor, MI
48103-4816, U.S.A.}
\email{jft@ams.org}
\begin{abstract}
We show that a Krein-Feller operator is naturally associated to a
fixed measure $\mu$, assumed positive, $\sigma$-finite, and non-atomic.
Dual pairs of operators are introduced, carried by the two Hilbert
spaces, $L^{2}\left(\mu\right)$ and $L^{2}\left(\lambda\right)$,
where $\lambda$ denotes Lebesgue measure. An associated operator
pair consists of two specific densely defined (unbounded) operators,
each one contained in the adjoint of the other. This then yields a
rigorous analysis of the corresponding $\mu$-Krein-Feller operator
as a closable quadratic form. As an application, for a given measure
$\mu$, including the case of fractal measures, we compute the associated
diffusion, semigroup, Dirichlet forms, and $\mu$-generalized heat
equation.
\end{abstract}

\subjclass[2000]{Primary: 47B32, 47B25, 47E05, 46N20. Secondary: 46E22, 46N30, 46N50,
60G15.}
\keywords{Hilbert space, reproducing kernel Hilbert space, harmonic analysis,
Gaussian free fields, transforms, covariance, generalized Ito integration,
Krein-Feller operators, unbounded operators, dual pairs, semigroup,
diffusion, Ito lemma, Cantor measures, iterated function systems,
fractals, selfadjoint extensions, generalized Brownian motion, Dirichlet
forms.}
\maketitle

\section{Introduction}

Recently there have been several advances to an harmonic analysis
of Krein-Feller operators for classes of singular measures. (Intuitively,
a Krein-Feller operator is an analogue of a Laplacian for classical
domains, as they arise in diffusion problems and in potential theory.)

In fact there are recent papers which cover the theory from the point
of fractal analysis, see e.g., \cite{MR2407555,MR2017701,MR1204707,MR4184588,MR4025934,MR3110587};
as well as applications to physics and to signal processing, e.g.,
the papers \cite{MR2213587,MR1408681,MR885633,MR849517,MR2973393,MR1660800,MR933819}.
Our present approach to Krein-Feller operators is motivated by both
of these new trends; but our approach is based on a new duality. It
combines a new transform theory based on the theory of reproducing
kernel Hilbert spaces (RKHSs), and a new technology introduced here,
based on pairs of unbounded densely defined operators, each one contained
in the adjoint of the other.

A word about the terminology \textquotedblleft Krein-Feller operator\textquotedblright{}
(details are cited inside the paper): Mark Krein has pioneered a number
of powerful Hilbert space-based tools which have found numerous applications,
and the present problem is a case in point. Krein's operator theory
(cited below) forms the foundation in our approach to problems for
unbounded operators with dense domain in Hilbert space. Sections \ref{sec:DP}
and \ref{sec:rkhs} below will elaborate on this. William Feller,
in the name \textquotedblleft Krein-Feller operator\textquotedblright{}
refers to the role of the KF-operator in the study of diffusion. Indeed,
W. Feller was one of the pioneers in our understanding of diffusion,
diffusion-semigroups, and their analysis. Hence later authors have
adopted the name \textquotedblleft Krein-Feller operators\textquotedblright{}
for the associated semigroup generators. There are interesting connections
to inverse problems, and prediction theory, see \cite{MR0448523}.
A nice presentation of this, and early work of Krein and Feller, is
\cite[chapter 5]{MR0448523}. We shall include additional details
on this point in \secref{KF} below.

Starting with a fixed positive non-atomic Borel measure $\mu$ (with
support contained in $\mathbb{R}$), then, informally, the associated
\emph{Krein-Feller operator} (denoted $K_{F}=K_{F}^{\left(\mu\right)}$)
is $K_{F}=\frac{d}{d\mu}\frac{d}{dx}$. The meaning of ``$d/d\mu$''
will be made precise. If $x>0$, set $g_{\mu}\left(x\right)=\mu\left(\left[0,x\right]\right)$,
i.e., the cumulative distribution. For $\psi\in C^{1}$, we have $\frac{d}{d\mu}\left(\psi\circ g_{\mu}\right)=\psi'\circ g_{\mu}$.
A key step in our consideration is a rigorous study of $K_{F}$ as
an unbounded (symmetric) operator in $L^{2}\left(\mu\right)$.

\textbf{Organization}: We begin in \secref{DP} with the framework
for our dual pair analysis. This is presented in the rather general
setting of pairs of Hilbert spaces, and associated pairs of densely
defined (unbounded) operators. Particular choices of dual pairs of
operators are then applied to a rigorous analysis of Krein-Feller
operators in \secref{KF}. The framework for these considerations
is a fixed measure $\mu$, assumed positive, sigma-finite, and non-atomic.
Hence, our starting point is a specified and fixed measure $\mu$
(generally singular, e.g., a Cantor measure). The two Hilbert spaces
for the corresponding dual pair of unbounded operators will then be
$L^{2}(\mu)$ and $L^{2}(\lambda)$, where $\lambda$ denotes Lebesgue
measure, or its restriction to a chosen interval. The rest of \secref{KF}
will deal with an analysis of the associated diffusion, Markov process,
semigroup, and a corresponding $\mu$-generalized heat equation. \secref{class}
studies Stieltjes measures $df$ globally. For this purpose, we introduce
a Hilbert space $\mathscr{H}_{\text{class}}$ of \textquotedblleft sigma
functions\textquotedblright{} as a Hilbert space of certain equivalence
classes. Starting with a fixed Stieltjes measure $df$, we then identify
its pairwise mutually singular components with corresponding orthogonal
\textquotedblleft pieces\textquotedblright{} in the Hilbert space
$\mathscr{H}_{\text{class}}$.

In a general framework, these settings correspond to suitably specified
Dirichlet forms; the subject of \secref{rkhs}. An application to
iterated function system (IFS) measures is also included in \secref{rkhs}.
Our analysis of specific applications relies on several new tools;
one in particular derives from consideration of associated reproducing
kernel Hilbert spaces (RKHSs), and Gaussian fields. This is covered
in \secref{sc}.

\section{\label{sec:DP}Dual pairs of operators in Hilbert space}

The notion of \emph{dual pairs} we shall need here is in \defref{sp}
below. But the operators in question will act between two Hilbert
spaces to be specified. Hence, we shall first need to recall some
properties of \emph{unbounded operators} with specified dense domains;
especially the precise definition (Def \ref{def:B1}) of the adjoints
of such operators. With this accomplished, the dual pair definition
(Def \ref{def:sp}) for a pair of densely defined operators amounts
to the assertion that each operator in the pair be contained in the
dual of the other (\lemref{ST}). We shall need this in our analysis
of classes of \emph{Krein-Feller operators} introduced in \secref{KF}
below. For a given measure $\mu$ and associated Krein-Feller operator
$K_{F}$, we shall then identify a dual pair which provides a factorization
of this Krein-Feller operator $K_{F}$. Of course, $K_{F}$ is an
unbounded operator, symmetric and semibounded; so our dual pair factorization
will present us with a canonical selfadjoint extension, see \lemref{ST}.
Background references for this include \cite{MR0231220,MR0442564,MR1009163,MR65809,MR1335452,MR2953553}.

Let $\mathscr{H}_{1}$ and $\mathscr{H}_{2}$ be complex Hilbert spaces.
If $\mathscr{H}_{1}\xrightarrow{\;T\;}\mathscr{H}_{2}$ represents
a linear operator from $\mathscr{H}_{1}$ into $\mathscr{H}_{2}$,
we shall denote 
\begin{equation}
dom\left(T\right)=\left\{ \varphi\in\mathscr{H}_{1}\mid\mbox{\ensuremath{T\varphi} is well-defined}\right\} ,\label{eq:in1}
\end{equation}
the domain of $T$, and 
\begin{equation}
ran\left(T\right)=\left\{ T\varphi\mid\varphi\in dom\left(T\right)\right\} ,\label{eq:in2}
\end{equation}
the range of $T$. The closure of $ran\left(T\right)$ will be denoted
$\overline{ran\left(T\right)}$.
\begin{defn}
\label{def:B1}Let $T:\mathscr{H}_{1}\rightarrow\mathscr{H}_{2}$
be a densely defined operator, and let
\begin{align}
dom(T^{*})= & \Big\{ h_{2}\in\mathscr{H}_{2}\mid\mbox{\ensuremath{\exists C=C_{h_{2}}<\infty,} s.t. \ensuremath{\left|\left\langle h_{2},T\varphi\right\rangle _{2}\right|\leq C\left\Vert \varphi\right\Vert _{1}}}\nonumber \\
 & \quad\mbox{holds for \ensuremath{\forall\varphi\in dom\left(T\right)}}\Big\}.\label{eq:in3}
\end{align}
By Riesz' theorem, there is a unique $\eta\in\mathscr{H}_{1}$ for
which
\begin{equation}
\left\langle \eta,\varphi\right\rangle _{1}=\left\langle h_{2},T\varphi\right\rangle _{2},\quad h_{2}\in dom(T^{*}),\;\varphi\in dom\left(T\right),\label{eq:4}
\end{equation}
and the adjoint operator is defined as $T^{*}h_{2}=\eta$. See the
diagram below:

\[
\xymatrix{\mathscr{H}_{1}\ar@/^{1pc}/[rr]^{T} &  & \mathscr{H}_{2}\ar@/^{1pc}/[ll]^{T^{*}}}
\]
\end{defn}

\begin{defn}
The \emph{graph} of $T:\mathscr{H}_{1}\rightarrow\mathscr{H}_{2}$
is 
\begin{equation}
G_{T}:=\left\{ \begin{bmatrix}\varphi\\
T\varphi
\end{bmatrix}\mid\varphi\in dom\left(T\right)\right\} \subset\text{\ensuremath{\mathscr{H}}}_{1}\oplus\text{\ensuremath{\mathscr{H}}}_{2},\label{eq:in6}
\end{equation}
where $\mathscr{H}_{1}\oplus\mathscr{H}_{2}$ is a Hilbert space under
the natural inner product
\begin{equation}
\left\langle \begin{bmatrix}\varphi_{1}\\
\varphi_{2}
\end{bmatrix},\begin{bmatrix}\psi_{1}\\
\psi_{2}
\end{bmatrix}\right\rangle :=\left\langle \varphi_{1},\psi_{1}\right\rangle _{\mathscr{H}_{1}}+\left\langle \varphi_{2},\psi_{2}\right\rangle _{\mathscr{H}_{2}}.
\end{equation}
\end{defn}

\begin{defn}
Let $T:\mathscr{H}_{1}\rightarrow\mathscr{H}_{2}$ be a linear operator. 
\begin{enumerate}
\item \label{enu:g1}$T$ is \emph{closed} if $G_{T}$ is closed in $\text{\ensuremath{\mathscr{H}}}_{1}\oplus\text{\ensuremath{\mathscr{H}}}_{2}$.
\item \label{enu:g2} $T$ is \emph{closable} if $\overline{\ensuremath{G_{T}}}$
is the graph of an operator. 
\item If (\ref{enu:g2}) holds, the operator corresponding to $\overline{G_{T}}$,
denoted $\overline{T}$, is called the \emph{closure}, i.e., 
\begin{equation}
\overline{G_{T}}=G_{\overline{T}}.\label{eq:in7}
\end{equation}
\end{enumerate}
\end{defn}

We shall need the following two results for unbounded operators, see
e.g., \cite{MR1009163,MR2953553,MR1157815}. To clarify notation,
and for the benefit of the reader, we have included them below in
the form they are needed.
\begin{thm}
\label{thm:in1}Let $T:\mathscr{H}_{1}\rightarrow\mathscr{H}_{2}$
be a densely defined operator. Then
\begin{enumerate}
\item \label{enu:b1}$T^{*}$ is closed;
\item \label{enu:b2}$T$ is closable $\Longleftrightarrow$ $dom\left(T^{*}\right)$
is dense;
\item \label{enu:b3}$T$ is closable $\Longrightarrow$ $(\overline{T})^{*}=T^{*}$. 
\end{enumerate}
\end{thm}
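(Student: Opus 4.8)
The plan is to reduce all three statements to a single graph identity, the von~Neumann ``flip'' description of the adjoint. Introduce the unitary
\[
V:\mathscr{H}_{1}\oplus\mathscr{H}_{2}\longrightarrow\mathscr{H}_{2}\oplus\mathscr{H}_{1},\qquad V\begin{bmatrix}\varphi\\\psi\end{bmatrix}=\begin{bmatrix}-\psi\\\varphi\end{bmatrix}.
\]
Since $V$ is a surjective isometry, it carries orthogonal complements to orthogonal complements and commutes with closures. The first step is to observe that a vector $h_{2}\oplus\eta\in\mathscr{H}_{2}\oplus\mathscr{H}_{1}$ is orthogonal to $VG_{T}=\{(-T\varphi)\oplus\varphi:\varphi\in dom(T)\}$ precisely when $\left\langle \eta,\varphi\right\rangle _{1}=\left\langle h_{2},T\varphi\right\rangle _{2}$ for all $\varphi\in dom(T)$; by \defref{B1} this says exactly that $h_{2}\in dom(T^{*})$ and $\eta=T^{*}h_{2}$. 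Hence
\[
G_{T^{*}}=\left(VG_{T}\right)^{\perp}\quad\text{in }\mathscr{H}_{2}\oplus\mathscr{H}_{1}.
\]
I should record at this point that $T^{*}$ really is single-valued: if $0\oplus\eta\in G_{T^{*}}$ then $\eta\perp dom(T)$, and density of $dom(T)$ forces $\eta=0$. This is the only place the hypothesis that $T$ is densely defined enters, and it is what makes $T^{*}$ a bona fide operator.

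With the identity in hand, (1) and (3) are immediate. For (1), an orthogonal complement is always a closed subspace, so $G_{T^{*}}$ is closed, i.e.\ $T^{*}$ is closed. For (3), assume $T$ is closable, so that $\overline{T}$ exists with $G_{\overline{T}}=\overline{G_{T}}$. Because the orthogonal complement of a set equals that of its closure, and because $V$ is a homeomorphism (so $V\overline{G_{T}}=\overline{VG_{T}}$), one gets
\[
G_{(\overline{T})^{*}}=\left(VG_{\overline{T}}\right)^{\perp}=\left(V\overline{G_{T}}\right)^{\perp}=\left(\overline{VG_{T}}\right)^{\perp}=\left(VG_{T}\right)^{\perp}=G_{T^{*}},
\]
whence $(\overline{T})^{*}=T^{*}$.

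The substantive part is (2), and here I would take orthogonal complements once more. Applying $V^{-1}$ to the identity and using $V^{-1}(M^{\perp})=(V^{-1}M)^{\perp}$ gives $V^{-1}G_{T^{*}}=G_{T}^{\perp}$, and taking complements yields $\overline{G_{T}}=(V^{-1}G_{T^{*}})^{\perp}$, since $(G_{T}^{\perp})^{\perp}=\overline{G_{T}}$. Now $T$ is closable exactly when $\overline{G_{T}}$ is a graph, i.e.\ when the only $\psi$ with $0\oplus\psi\in\overline{G_{T}}$ is $\psi=0$. Computing $V^{-1}G_{T^{*}}=\{(T^{*}h_{2})\oplus(-h_{2}):h_{2}\in dom(T^{*})\}$, the condition $0\oplus\psi\perp V^{-1}G_{T^{*}}$ reduces to $\left\langle \psi,h_{2}\right\rangle _{2}=0$ for all $h_{2}\in dom(T^{*})$, i.e.\ $\psi\perp dom(T^{*})$. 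Thus $\overline{G_{T}}$ is a graph if and only if $dom(T^{*})^{\perp}=\{0\}$, which is precisely the density of $dom(T^{*})$. The only obstacle worth flagging is bookkeeping rather than depth: one must keep straight which summand order ($\mathscr{H}_{1}\oplus\mathscr{H}_{2}$ versus $\mathscr{H}_{2}\oplus\mathscr{H}_{1}$) each graph inhabits and track the sign in $V$ carefully, since a reversed convention silently converts the orthogonality relation into the wrong one.
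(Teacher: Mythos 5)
Your proof is correct and complete. Note, however, that the paper does not prove this theorem at all: it states it as a recalled standard fact, with a pointer to the literature (\textquotedblleft We shall need the following two results for unbounded operators, see e.g., ...\textquotedblright ), so there is no in-paper argument to compare against. What you have supplied is precisely the classical von Neumann graph argument that those references contain: the single identity $G_{T^{*}}=\left(VG_{T}\right)^{\perp}$, with $V$ the unitary flip $\varphi\oplus\psi\mapsto(-\psi)\oplus\varphi$, from which (\ref{enu:b1}) follows because orthogonal complements are closed, (\ref{enu:b3}) follows because $M^{\perp}=\overline{M}^{\perp}$ and $V$ commutes with closures, and (\ref{enu:b2}) follows by taking complements once more to get $\overline{G_{T}}=\left(V^{-1}G_{T^{*}}\right)^{\perp}$ and then reading off that the vertical slice $\left\{ \psi:0\oplus\psi\in\overline{G_{T}}\right\} $ equals $dom\left(T^{*}\right)^{\perp}$. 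Your bookkeeping of the two summand orders and the sign is right, and you correctly isolate the one place where density of $dom\left(T\right)$ is used (single-valuedness of $T^{*}$). One micro-step you leave implicit: matching your orthogonality relation with Definition \ref{def:B1} requires observing that the existence of $\eta$ with $\left\langle \eta,\varphi\right\rangle _{1}=\left\langle h_{2},T\varphi\right\rangle _{2}$ for all $\varphi\in dom\left(T\right)$ implies the defining bound $\left|\left\langle h_{2},T\varphi\right\rangle _{2}\right|\leq\left\Vert \eta\right\Vert \left\Vert \varphi\right\Vert _{1}$ by Cauchy--Schwarz (the converse being Riesz); this is a one-line remark, not a gap.
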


\begin{thm}[von Neumann, polar decomposition/factorization, \cite{MR1009163}]
\label{thm:vN}Let $\mathscr{H}_{i}$, $i=1,2$, be two Hilbert spaces,
and let $T$ be a closed operator from $\mathscr{H}_{1}$ into $\mathscr{H}_{2}$
having dense domain in $\mathscr{H}_{1}$; then $T^{*}T$ is selfadjoint
in $\mathscr{H}_{1}$, $TT^{*}$ is selfadjoint in $\mathscr{H}_{2}$,
both with dense domains.

Moreover, there is a partial isometry $J:\mathscr{H}_{1}\rightarrow\mathscr{H}_{2}$
such that 
\begin{equation}
T=J\left(T^{*}T\right)^{\frac{1}{2}}=\left(TT^{*}\right)^{\frac{1}{2}}J\label{eq:vn1}
\end{equation}
holds on $dom\left(T\right)$. (Equation (\ref{eq:vn1}) is called
the polar decomposition of $T$.)
\end{thm}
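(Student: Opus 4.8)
The plan is to run von Neumann's classical argument, whose engine is an orthogonal decomposition of the product space $\mathscr{H}_1\oplus\mathscr{H}_2$ driven by the \emph{closed} graph $G_T$. First I would introduce the unitary ``flip'' $W:\mathscr{H}_1\oplus\mathscr{H}_2\to\mathscr{H}_2\oplus\mathscr{H}_1$, $W(x,y)=(-y,x)$, and check straight from \defref{B1} that the orthogonal complement of $W(G_T)$ is exactly $G_{T^*}$: a pair $(y,z)$ annihilates every $(-Tx,x)$ iff $\langle y,Tx\rangle_2=\langle z,x\rangle_1$ for all $x\in dom(T)$, which is precisely the statement $y\in dom(T^*)$, $T^*y=z$. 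Since $T$ is closed, $G_T$ and hence $W(G_T)$ is closed, so I obtain the orthogonal direct sum $\mathscr{H}_2\oplus\mathscr{H}_1=W(G_T)\oplus G_{T^*}$.

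Decomposing the vector $(0,h)$ with $h\in\mathscr{H}_1$ as $(0,h)=(-Tx,x)+(y,T^*y)$ then forces $y=Tx$ and $h=(I+T^*T)x$, so that $B:h\mapsto x$ is an everywhere-defined inverse of $I+T^*T$. A one-line estimate, $\|Bh\|^2\le\|x\|^2+\|Tx\|^2=\langle h,x\rangle_1\le\|h\|\,\|Bh\|$, shows $B$ is a contraction; the adjoint relation gives $\langle Bh,k\rangle_1=\langle x,x'\rangle_1+\langle Tx,Tx'\rangle_2$, symmetric in its two arguments, so $B$ is bounded, selfadjoint, injective, with $0\le B\le I$. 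Consequently $T^*T=B^{-1}-I$ is selfadjoint with dense domain $ran(B)$, settling the first assertion; applying the same argument to the closed densely defined operator $T^*$ (using $T^{**}=T$, which follows from \thmref{in1}) yields that $TT^*$ is selfadjoint with dense domain in $\mathscr{H}_2$.

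For the factorization I would pass to the square root $P:=(T^*T)^{1/2}$ via the spectral theorem and exploit the norm identity $\|Px\|^2=\langle T^*Tx,x\rangle_1=\|Tx\|^2$, which holds on the core $dom(T^*T)$ and, by closedness of $P$ and $T$, propagates to $dom(P)=dom(T)$ (so $\ker P=\ker T$). Since $Px\mapsto Tx$ is then isometric, it extends to a partial isometry $J$ with initial space $\overline{ran(P)}$ and final space $\overline{ran(T)}$ (extended by $0$ on $\ker T$), giving $T=JP=J(T^*T)^{1/2}$ on $dom(T)$. The second identity follows formally by taking adjoints, $T^*=PJ^*$, hence $TT^*=JP^2J^*$, whence the functional calculus gives $(TT^*)^{1/2}=JPJ^*$ and finally $(TT^*)^{1/2}J=JP(J^*J)=JP=T$, using that $J^*J$ is the projection onto $\overline{ran(P)}$, the subspace on which $P$ is supported.

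I expect the real content to sit in two places. The genuine theorem is the decomposition of the first paragraph: this is where closedness of $T$ is indispensable, and where all of the selfadjointness and density statements are manufactured at once. The fiddliest \emph{bookkeeping}, by contrast, is in the final step, where matching the two square roots $(T^*T)^{1/2}$ and $(TT^*)^{1/2}$ through $J$ requires tracking the reducing projections $J^*J$ and $JJ^*$ and justifying the domain containments behind the formal manipulation $(TT^*)^{1/2}=JPJ^*$. Everything else (symmetry, the bound $0\le B\le I$, the spectral calculus, and density of the range of an injective selfadjoint operator) is routine.
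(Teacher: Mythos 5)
The paper never proves this statement: it is imported as von Neumann's classical theorem with a citation to \cite{MR1009163}, and your argument is exactly the standard proof found in that literature --- the graph-rotation decomposition $\mathscr{H}_{2}\oplus\mathscr{H}_{1}=W\left(G_{T}\right)\oplus G_{T^{*}}$, the everywhere-defined contraction $B=\left(I+T^{*}T\right)^{-1}$ yielding selfadjointness, and the partial isometry generated by the isometric correspondence $Px\mapsto Tx$ with $P=\left(T^{*}T\right)^{1/2}$. Your outline is correct; the one step you assert rather than prove is that $dom\left(T^{*}T\right)$ is a core for $T$ (needed to propagate $\left\Vert Px\right\Vert =\left\Vert Tx\right\Vert $ from $dom\left(T^{*}T\right)$ to all of $dom\left(T\right)$ in both directions), but this follows in one line from the surjectivity of $I+T^{*}T$ you already established: if $\left(x,Tx\right)$ is orthogonal in $G_{T}$ to all $\left(u,Tu\right)$ with $u\in dom\left(T^{*}T\right)$, then $\left\langle x,\left(I+T^{*}T\right)u\right\rangle _{1}=0$ for all such $u$, forcing $x=0$, so the restricted graph is dense in $G_{T}$.
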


\begin{defn}[symmetric pair]
\label{def:sp} For $i=1,2$, let $\mathscr{H}_{i}$ be two Hilbert
spaces, and suppose $\mathscr{D}_{i}\subset\mathscr{H}_{i}$ are given
dense subspaces. 

We say that a pair of operators $\left(S,T\right)$ forms a \emph{symmetric
pair} if $dom\left(T\right)=\mathscr{D}_{1}$, and $dom\left(S\right)=\mathscr{D}_{2}$;
and moreover, 
\begin{equation}
\left\langle Tu,v\right\rangle _{\mathscr{H}_{2}}=\left\langle u,Sv\right\rangle _{\mathscr{H}_{1}}\label{eq:sp1}
\end{equation}
holds for $\forall u\in\mathscr{D}_{1}$, $\forall v\in\mathscr{D}_{2}$.
See the diagram below: 
\[
\xymatrix{\mathscr{H}_{1}\ar@/^{1.2pc}/[rr]^{T} &  & \mathscr{H}_{2}\ar@/^{1.2pc}/[ll]^{S}}
\]
\end{defn}

\begin{lem}[Dual Pair \cite{MR3552934}]
\label{lem:ST}Let $\left(S,T\right)$ be the pair of operators specified
in (\ref{eq:sp1}). Then, we have
\begin{equation}
T\subset S^{*},\quad S\subset T^{*}\label{eq:B10}
\end{equation}
(containment of graphs.) Moreover, the two operators $S^{*}\overline{S}$
and $T^{*}\overline{T}$ are selfadjoint.
\end{lem}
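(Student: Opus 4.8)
The plan is to read off the two graph containments (\ref{eq:B10}) straight from the definition of the adjoint in \defref{B1}, and then to deduce selfadjointness of $S^{*}\overline{S}$ and $T^{*}\overline{T}$ by feeding the closures $\overline{S},\overline{T}$ into von Neumann's factorization theorem (\thmref{vN}). The only genuine computation is the first containment; everything after that is bookkeeping with the three facts collected in \thmref{in1}.

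First I would show $T\subset S^{*}$. Fix $u\in\mathscr{D}_{1}=dom(T)$. For every $v\in\mathscr{D}_{2}=dom(S)$ the symmetric-pair identity (\ref{eq:sp1}) gives $\langle u,Sv\rangle_{\mathscr{H}_{1}}=\langle Tu,v\rangle_{\mathscr{H}_{2}}$, so Cauchy--Schwarz yields $\left|\langle u,Sv\rangle_{\mathscr{H}_{1}}\right|\leq\left\Vert Tu\right\Vert _{\mathscr{H}_{2}}\left\Vert v\right\Vert _{\mathscr{H}_{2}}$. With $C=\left\Vert Tu\right\Vert _{\mathscr{H}_{2}}<\infty$ this is exactly the boundedness condition defining $dom(S^{*})$, so $u\in dom(S^{*})$. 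The defining relation (\ref{eq:4}) for the adjoint then reads $\langle S^{*}u,v\rangle_{\mathscr{H}_{2}}=\langle u,Sv\rangle_{\mathscr{H}_{1}}=\langle Tu,v\rangle_{\mathscr{H}_{2}}$ for all $v\in\mathscr{D}_{2}$, and since $\mathscr{D}_{2}$ is dense in $\mathscr{H}_{2}$ this forces $S^{*}u=Tu$. Hence $T\subset S^{*}$. The reverse inclusion $S\subset T^{*}$ is the identical argument applied to the conjugated identity $\langle Sv,u\rangle_{\mathscr{H}_{1}}=\langle v,Tu\rangle_{\mathscr{H}_{2}}$, i.e.\ to the symmetric pair with the roles of $(T,\mathscr{H}_{1})$ and $(S,\mathscr{H}_{2})$ interchanged.

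Next I would record closability. Because $dom(S)=\mathscr{D}_{2}$ is dense and $S\subset T^{*}$, the domain $dom(T^{*})$ is dense, so $T$ is closable by \thmref{in1}(\ref{enu:b2}); symmetrically $S$ is closable. Thus $\overline{T}$ and $\overline{S}$ exist as closed, densely defined operators (dense, since $T\subset\overline{T}$ and $dom(T)$ is already dense), and \thmref{in1}(\ref{enu:b3}) identifies their adjoints as $(\overline{T})^{*}=T^{*}$ and $(\overline{S})^{*}=S^{*}$.

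Finally, selfadjointness is immediate from von Neumann. Applying \thmref{vN} to the closed densely defined operator $\overline{T}$ shows that $(\overline{T})^{*}\overline{T}$ is selfadjoint in $\mathscr{H}_{1}$, and substituting $(\overline{T})^{*}=T^{*}$ gives that $T^{*}\overline{T}$ is selfadjoint; the same argument applied to $\overline{S}$ gives selfadjointness of $S^{*}\overline{S}$. I expect the main (indeed essentially the only) subtle point to be the first step, where one must invoke Cauchy--Schwarz to land inside $dom(S^{*})$ and, separately, the density of $\mathscr{D}_{2}$ to pin down the value $S^{*}u=Tu$; once the two containments are secured, the cited parts of \thmref{in1} together with \thmref{vN} do all the remaining work.
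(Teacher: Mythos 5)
Your proof is correct. Note that the paper itself gives no proof of this lemma at all---it is quoted from the reference [MR3552934]---so there is nothing internal to compare against; your argument (Cauchy--Schwarz plus the symmetric-pair identity to land in $dom(S^{*})$ and $dom(T^{*})$, density to pin down the adjoint values, then \thmref{in1} and von Neumann's theorem applied to $\overline{S}$ and $\overline{T}$ with the identifications $(\overline{S})^{*}=S^{*}$, $(\overline{T})^{*}=T^{*}$) is exactly the standard argument the citation points to, and it correctly fills in every step, including the closability remark the paper makes immediately after the lemma.
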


It is immediate from (\ref{eq:B10}) that both $S$ and $T$ are \emph{closable}. 
\begin{defn}
We say that a symmetric pair is \emph{maximal} if 
\[
\overline{T}=S^{*},\quad\text{and}\quad\overline{S}=T^{*}.
\]
\end{defn}

With the starting point, a fixed positive non-atomic Borel measure
$\mu$ with support on an interval, we now show how the operator theoretic
framework of dual pairs (\defref{sp}) offers an explicit setting
for the study of spectral theory of the associated Krein-Feller operator.
In particular, we show in the subsequent sections how key features
of our dual pair framework from the discussion above serves to yield
explicit new results for the Krein-Feller operator, for example \thmref{D2},
\lemref{C9}, \corref{D3}.

\section{\label{sec:KF}Krein-Feller operators, and their properties }

For a given measure $\mu$ we shall offer several tools in our analysis
of the associated Krein-Feller operator $K_{F}$. One will make use
of an appropriate dual pair of operators (see Sec \ref{sec:DP}, and
\thmref{D2} and \corref{D3} below). The other is more direct; it
is sketched in the present section. In \thmref{C1}, we present the
inverse of $K_{F}$ as an explicit integral operator. This will be
especially useful in our analysis of the spectrum of diverse \emph{selfadjoint
extensions} of $K_{F}$. Background references for this include \cite{MR2966130,MR2793121,MR2384473,MR562914,MR0345224}.
For basics on Stieltjes measures, fractals, and transformation rules
for measures, readers may wish to consult \cite{MR3616046,MR0442564,MR625600,MR735967,MR0214150,MR0047744,MR1157815,MR2558684,MR3275999}.

Perhaps it is appropriate to add a comment on the role of W. Feller,
in the name \textquotedblleft Krein-Feller operator.\textquotedblright{}
Feller was one of the pioneers in the study of diffusion, diffusion-semigroups,
and their analysis. Hence later authors have adopted the name \textquotedblleft Krein-Feller
operators\textquotedblright{} for the associated semigroup generators.
We shall elaborate this point in the next section. A list of references
which covers this viewpoint is long, but it includes the following,
\cite{MR65809,MR87254,MR234524}.

\textbf{Terminology convention.} Fixing a measure $\mu$ as specified,
then formally, the notation $\nabla_{\mu}$ (see (\ref{eq:C2})) and
$T_{\mu}$ stand for the same operation, but in the theorem below,
we are referring to a specific pair of Hilbert spaces, and the notation
$T_{\mu}$ is used to stress this point. The conclusion of \thmref{D2}
is that the Krein-Feller operator $K_{F}$ then has a symmetric dual-pair
realization in the sense of \defref{sp}.

If $f$ is a function on $\mathbb{R}$ (or defined on a subinterval),
assumed to be locally of bounded variation, then we shall denote by
$df$ the corresponding Stieltjes measure. (Recall $df$ is defined
first on intervals $(x,y]$ by $df\left((x,y]\right):=f\left(y\right)-f\left(x\right)$,
and then extended to the Borel $\sigma$-algebra $\mathscr{B}$ by
the usual $\sigma$-algebra-completion procedure.) If $\mu$ is a
fixed positive Borel measure, we then consider the corresponding Radon-Nikodym
derivative, denoted 
\begin{equation}
f^{\left(\mu\right)}=\nabla^{\left(\mu\right)}f=df/d\mu.
\end{equation}
It is determined by, 
\begin{equation}
f\left(y\right)-f\left(x\right)=\int_{x}^{y}\left(\nabla^{\left(\mu\right)}f\right)d\mu;\label{eq:rC2}
\end{equation}
abbreviated $\left(\nabla^{\left(\mu\right)}f\right)d\mu=df$.

The the Krein-Feller operator $K_{F}$ is defined as 
\begin{equation}
K_{F}=\frac{d}{d\mu}\frac{d}{dx}=\nabla_{\mu}\frac{d}{dx}.
\end{equation}

In what follows, we denote by $J$ the unit interval $\left[0,1\right]$. 
\begin{thm}[A symmetric pair for $\nabla_{\mu}$]
\label{thm:D2}If $\varphi\in C_{c}^{\infty}\left(J\right)$ then
\begin{equation}
-\int\varphi'f\,dx=\int_{J}\varphi\left(T_{\mu}f\right)d\mu;\label{eq:cc29}
\end{equation}
so we obtain the dual pair of operators: 
\[
\xymatrix{\mathscr{L}^{2}\left(\mu\right)\ar@/^{1.3pc}/[rr]^{T_{\mu}} &  & L^{2}\left(\mu\right)\ar@/^{1.3pc}/[ll]^{D=-\frac{d}{dx}}}
\]
Here, 
\begin{equation}
\mathscr{L}^{2}\left(\mu\right):=\overline{dom\left(T_{\mu}\right)}^{L^{2}\left(\lambda\right)},
\end{equation}
i.e., the $L^{2}\left(\lambda\right)$-closure of $dom\left(T_{\mu}\right)$;
see (\ref{eq:Dmax}).
\end{thm}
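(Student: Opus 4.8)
The plan is to recognize the asserted identity~(\ref{eq:cc29}) as an instance of Stieltjes integration by parts and then to read off the dual-pair conclusion from \lemref{ST}. The starting observation is that, by the definition of $T_{\mu}=\nabla_{\mu}$ as the Radon--Nikodym derivative $df/d\mu$ (cf.~(\ref{eq:rC2})), one has $(T_{\mu}f)\,d\mu=df$ as (signed) Borel measures on $J$. Hence the right-hand side of~(\ref{eq:cc29}) is a pure Stieltjes integral,
\[
\int_{J}\varphi\,(T_{\mu}f)\,d\mu=\int_{J}\varphi\,df,
\]
and the whole theorem reduces to proving $\int_{J}\varphi\,df=-\int_{J}\varphi'f\,dx$ for $\varphi\in C_{c}^{\infty}(J)$.

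To prove this last identity I would substitute $\varphi(x)=\int_{0}^{x}\varphi'(t)\,dt$ and apply Fubini's theorem. Since $f\in dom(T_{\mu})$ is locally of bounded variation with $df\ll\mu$, and $\mu$ is non-atomic, the measure $df$ has no atoms; thus $f$ is continuous, $df$ is a finite signed measure on the compact interval $J$, and---together with the boundedness of $\varphi'$---the interchange of integration order is justified. Explicitly,
\[
\int_{J}\varphi\,df=\int_{J}\!\Big(\int_{0}^{x}\varphi'(t)\,dt\Big)\,df(x)=\int_{J}\varphi'(t)\,df\big([t,1]\big)\,dt=\int_{J}\varphi'(t)\big(f(1)-f(t)\big)\,dt,
\]
where non-atomicity makes the distinction between $[t,1]$ and $(t,1]$ immaterial, so that $df\big([t,1]\big)=f(1)-f(t)$. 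The term $f(1)\int_{J}\varphi'\,dt$ vanishes because $\varphi$ has compact support in the open interval, giving $\int_{J}\varphi'\,dt=\varphi(1)-\varphi(0)=0$; what remains is exactly $-\int_{J}\varphi'f\,dx$, which is~(\ref{eq:cc29}).

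With the identity established, the dual pair is immediate from \defref{sp} and \lemref{ST}. Set $\mathscr{H}_{1}=\mathscr{L}^{2}(\mu)$ (carrying the $L^{2}(\lambda)$-inner product), $\mathscr{H}_{2}=L^{2}(\mu)$, $\mathscr{D}_{1}=dom(T_{\mu})$, and $\mathscr{D}_{2}=C_{c}^{\infty}(J)$. The subspace $\mathscr{D}_{2}$ is dense in $L^{2}(\mu)$ because $\mu$ is a $\sigma$-finite non-atomic Borel measure (so the endpoints of $J$ carry no mass), while $\mathscr{D}_{1}$ is dense in $\mathscr{H}_{1}$ by the very definition of $\mathscr{L}^{2}(\mu)$ as the $L^{2}(\lambda)$-closure of $dom(T_{\mu})$. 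Reading~(\ref{eq:cc29}) as
\[
\langle T_{\mu}f,\varphi\rangle_{L^{2}(\mu)}=\langle f,D\varphi\rangle_{\mathscr{L}^{2}(\mu)},\qquad D=-\tfrac{d}{dx},
\]
for real $\varphi$, and extending to complex $\varphi$ by conjugate-linearity (using $\overline{\varphi}\in C_{c}^{\infty}(J)$), shows that $(D,T_{\mu})$ is a symmetric pair in the sense of \defref{sp}. Then \lemref{ST} yields the containments $T_{\mu}\subset D^{*}$ and $D\subset T_{\mu}^{*}$, the closability of both operators, and the selfadjointness of $T_{\mu}^{*}\overline{T_{\mu}}$ and $D^{*}\overline{D}$.

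The main obstacle I anticipate is not the algebra but the measure-theoretic bookkeeping that makes this reduction legitimate for \emph{every} $f$ in the maximal domain $dom(T_{\mu})$ of~(\ref{eq:Dmax}). One must check that membership in $dom(T_{\mu})$ indeed forces $df\ll\mu$ with Radon--Nikodym density lying in $L^{2}(\mu)$, that the resulting $f$ admits a continuous representative (hence is unambiguous as a Stieltjes integrator), and that both sides of~(\ref{eq:cc29}) are finite for $\varphi\in C_{c}^{\infty}(J)$. Once the domain is pinned down in this way, continuity of $f$ on the compact support of $\varphi'$ renders every integral above absolutely convergent and the Fubini interchange fully justified, so no further analytic difficulty remains.
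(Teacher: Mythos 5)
Your proof is correct, and it follows the same skeleton as the paper's: rewrite the right-hand side of (\ref{eq:cc29}) as the Stieltjes integral $\int_{J}\varphi\,df$, then integrate by parts against the compactly supported $\varphi$. The difference is in how the two steps are justified. The paper runs both through Riemann--Stieltjes partition sums,
\[
\int_{J}\varphi\left(T_{\mu}f\right)d\mu\;\simeq\;\sum_{i}\varphi\left(x_{i}\right)\int_{x_{i}}^{x_{i+1}}f^{\left(\mu\right)}d\mu\;=\;\sum_{i}\varphi\left(x_{i}\right)df\left(\left[x_{i},x_{i+1}\right]\right)\;\simeq\;\int\varphi\,df,
\]
and then cites Stieltjes integration by parts; the limiting arguments behind the ``$\simeq$'' signs are left implicit. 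You instead make the first step exact---the identity $\left(T_{\mu}f\right)d\mu=df$ as Borel measures, which is precisely the content of \lemref{C1} applied to $f\in\mathscr{D}_{1}$ of (\ref{eq:Dmax})---and you prove the integration-by-parts formula from scratch by writing $\varphi\left(x\right)=\int_{0}^{x}\varphi'\left(t\right)dt$ and applying Fubini to the finite product measure, with non-atomicity of $\mu$ giving $df\left(\left[t,1\right]\right)=f\left(1\right)-f\left(t\right)$ and compact support killing the $f\left(1\right)$ term. What your route buys is rigor and self-containment: the domain bookkeeping you flag ($df\ll\mu$ with $L^{2}\left(\mu\right)$ density, finiteness of $df$, continuity of $f$) is exactly what legitimizes the Fubini interchange, so no approximation scheme needs to be controlled. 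You also spell out the final operator-theoretic step---density of $C_{c}^{\infty}\left(J\right)$ in $L^{2}\left(\mu\right)$ and of $dom\left(T_{\mu}\right)$ in $\mathscr{L}^{2}\left(\mu\right)$, followed by \defref{sp} and \lemref{ST} to obtain $T_{\mu}\subset D^{*}$, $D\subset T_{\mu}^{*}$ and the selfadjoint operators $T_{\mu}^{*}\overline{T_{\mu}}$, $D^{*}\overline{D}$---which the paper leaves implicit after establishing (\ref{eq:cc29}). One shared imprecision (inherited from the paper, not introduced by you): for singular $\mu$ the function $-\varphi'$ need not lie in the subspace $\mathscr{L}^{2}\left(\mu\right)$, so strictly $D\varphi$ should be read as the orthogonal projection of $-\varphi'$ onto $\mathscr{L}^{2}\left(\mu\right)$ in $L^{2}\left(\lambda\right)$; since the pairing in (\ref{eq:sp1}) only ever tests $D\varphi$ against elements of $\mathscr{L}^{2}\left(\mu\right)$, this changes nothing in the argument.
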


\begin{proof}
One checks that 
\begin{equation}
\int\varphi\,df=-\int\varphi'f\,dx\label{eq:D4}
\end{equation}
holds for all $\varphi\in C_{c}^{\infty}\left(J\right)$, using integration
by parts. 

Details: Let $f$ and $\varphi$ be as specified, $\varphi\in C_{c}^{1}\left(J\right)$,
$f$ locally bounded variation s.t. $f^{\left(\mu\right)}=T_{\mu}f\in L_{loc}^{2}\left(\mu\right)$
is well defined. For the integral $\int_{J}\varphi\left(T_{\mu}f\right)d\mu$,
we therefore get the following approximation via choices of partitions
in the interval $J$: $x_{0}<x_{1}<\cdots$: 
\begin{eqnarray*}
\int_{J}\varphi\left(T_{\mu}f\right)d\mu & \simeq & \sum_{i}\int_{x_{i}}^{x_{i+1}}\varphi\left(T_{\mu}f\right)d\mu\\
 & \simeq & \sum_{i}\varphi\left(x_{i}\right)\int_{x_{i}}^{x_{i+1}}f^{\left(\mu\right)}d\mu\\
 & \underset{\left(\text{\ref{eq:rC2}}\right)}{=} & \sum_{i}\varphi\left(x_{i}\right)\underset{\text{the Stieltjes measure \ensuremath{df}}}{\underbrace{df\text{\ensuremath{\left(\left[x_{i},x_{i+1}\right]\right)}}}}\\
 & \simeq & \int\varphi df=-\int_{J}\varphi'\left(x\right)f\left(x\right)dx.
\end{eqnarray*}
 
\end{proof}

\subsection{\label{subsec:tl}Realization of $T_{\mu}$ as a skew-symmetric operator
with dense domain in $L^{2}\left(\mu\right)$}

Fix a non-atomic measure $\mu$ on $\left[0,1\right]$. Let 
\begin{equation}
\mathscr{D}_{1}:=\left\{ f:f\left(x\right)=f\left(0\right)+\int_{0}^{x}f^{\left(\mu\right)}d\mu,\;f^{\left(\mu\right)}\in L^{2}\left(\mu\right),\:\text{for all \ensuremath{x}}\right\} .\label{eq:Dmax}
\end{equation}
Then $\mathscr{D}_{1}\subset L^{2}\left(\mu\right)\cap C\left(\left[0,1\right]\right)$. 

Define 
\begin{equation}
\nabla_{\mu}f=f^{\left(\mu\right)},\quad\forall f\in\mathscr{D}_{1}.\label{eq:C2}
\end{equation}

In the lemma below we express eq (\ref{eq:C2}) for the operator $\nabla_{\mu}$
(acting on functions $f$) in terms of associated Stieltjes measures.
This point is summarized best in eq (\ref{eq:ca3}) in \lemref{C1}
below, where $df$ then denotes the \emph{Stieltjes measure} corresponding
to some function $f$. In the sequel we shall reserve the notation
$df$ for Stieltjes measure, (not to be confused with notions of differential.)
Recall that for the Stieltjes measure $df$ to make sense, the function
$f$ must be assumed to be locally of bounded variation.
\begin{lem}
\label{lem:C1}Let $f$ be a function on $\mathbb{R}$, assumed to
be locally of bounded variation, so that the Stieltjes measure $df$
is well defined. Let $\mu$ be a positive measure defined on the Borel
$\sigma$-algebra $\mathscr{B}$, and assume that 
\begin{equation}
df\ll\mu,\label{eq:ca1}
\end{equation}
i.e., that the implication (\ref{eq:ca2}) below holds: 
\begin{equation}
\mu\left(B\right)=0\Longrightarrow df\left(B\right)=0.\label{eq:ca2}
\end{equation}
Let $f^{\left(\mu\right)}$ be the corresponding Radon-Nikodym derivative
(also denoted $f^{\left(\mu\right)}=\nabla_{\mu}f$), then 
\begin{equation}
df=f^{\left(\mu\right)}d\mu.\label{eq:ca3}
\end{equation}
\end{lem}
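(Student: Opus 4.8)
The plan is to read equation (\ref{eq:ca3}) as a direct instance of the Radon--Nikodym theorem, so that the only real work is to verify that its hypotheses are met and that the resulting density coincides with $f^{(\mu)}$ as it was specified by (\ref{eq:rC2}). First I would record the structural fact that makes the statement meaningful: because $f$ is locally of bounded variation, the Stieltjes set function $df\bigl((x,y]\bigr):=f(y)-f(x)$ extends to a genuine signed, locally finite (hence $\sigma$-finite) Borel measure on $\mathscr{B}$. Writing $df=(df)_{+}-(df)_{-}$ for its Jordan decomposition and $|df|=(df)_{+}+(df)_{-}$ for the total variation, the hypothesis (\ref{eq:ca2}), namely $\mu(B)=0\Rightarrow df(B)=0$, applied to every Borel subset $A\subseteq B$ (for which $\mu(A)=0$ as well), gives $|df|(B)=0$; thus (\ref{eq:ca2}) is equivalent to $|df|\ll\mu$ in the usual sense, i.e.\ $df$ is absolutely continuous with respect to $\mu$.

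Next, since $\mu$ is positive and $\sigma$-finite, and $df$ is a $\sigma$-finite signed measure with $df\ll\mu$, the Radon--Nikodym theorem furnishes a $\mu$-measurable function $g$, unique up to $\mu$-null sets, such that $df(B)=\int_{B}g\,d\mu$ for every $B\in\mathscr{B}$. This identity is exactly the assertion $df=g\,d\mu$, so the content of the lemma reduces to the identification $g=f^{(\mu)}$.

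Finally I would pin $g$ down by testing against the generating $\pi$-system of half-open intervals: for each $(x,y]$ we have $\int_{(x,y]}g\,d\mu=df\bigl((x,y]\bigr)=f(y)-f(x)=\int_{x}^{y}f^{(\mu)}\,d\mu$, where the last equality is the defining relation (\ref{eq:rC2}). Since the half-open intervals generate $\mathscr{B}$ and the two measures $g\,d\mu$ and $f^{(\mu)}\,d\mu$ agree on this intersection-stable generating family, the uniqueness clause of the Radon--Nikodym theorem (equivalently, uniqueness of extension of measures) forces $g=f^{(\mu)}$ $\mu$-almost everywhere, which yields (\ref{eq:ca3}).

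The point demanding the most care is the very first step: one must confirm that ``locally of bounded variation'' is precisely the regularity guaranteeing that $df$ is a $\sigma$-finite \emph{signed} measure, so that both the Jordan decomposition and the $\sigma$-finite form of Radon--Nikodym are legitimately available. Once that regularity is secured, the absolute-continuity translation and the interval-testing identification are routine, and the remainder is the standard uniqueness-of-measures argument.
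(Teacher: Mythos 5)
Your proposal is correct and follows essentially the same route as the paper's own proof: both reduce the measure identity (\ref{eq:ca3}) to its restriction to intervals, where it becomes the defining relation (\ref{eq:rC2})/(\ref{eq:Dmax}) of $f^{\left(\mu\right)}$, and both rely on the fact that a locally finite (signed) Borel measure is determined by its values on intervals. Your write-up merely makes explicit the scaffolding the paper leaves implicit --- the Jordan decomposition showing $\left|df\right|\ll\mu$, the invocation of Radon--Nikodym to produce the density, and the $\pi$-system uniqueness argument identifying it with $f^{\left(\mu\right)}$.
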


\begin{proof}
The assertion in (\ref{eq:ca3}) amounts to the identity 
\begin{equation}
df\left(B\right)=\int_{B}f^{\left(\mu\right)}d\mu,\label{eq:ca4}
\end{equation}
for all $B\in\mathscr{B}$. But since $f$ is locally of bounded variation,
(\ref{eq:ca4}) follows from the corresponding assumption for intervals,
i.e., $B=\left[x,y\right]$ for all $x<y$; so 
\begin{equation}
f\left(y\right)-f\left(x\right)=\int_{x}^{y}f^{\left(\mu\right)}d\mu.\label{eq:ca5}
\end{equation}
Condition (\ref{eq:ca5}) in turn is equivalent to the definition
of $f^{\left(\mu\right)}=\nabla_{\mu}f$ given in (\ref{eq:Dmax})
above.
\end{proof}
\begin{rem}
Let $f$ be a locally bounded variation function, and let $\mu$ be
a positive Borel measure. Suppose that the two measures $df$ and
$\mu$ are mutually singular; we then set $\nabla_{\mu}f=0$. See
eq (\ref{eq:ca10}) below for justification.
\end{rem}

\begin{rem}
We can decompose the restriction $df\ll\mu$ in the definition of
$\nabla_{\mu}f$ as follows: 
\begin{enumerate}
\item[(a)] Let $f$ and $\mu$ be as stated, and pass to the Jordan-decomposition
of the signed measure $df$ (as a Stieltjes measure). Then 
\begin{equation}
df=\left(\nabla_{\mu}f\right)d\mu+\left(df\right)_{s}\label{eq:ca10}
\end{equation}
where the term $\left(df\right)_{s}$ in (\ref{eq:ca10}) is mutually
singular w.r.t. $\mu$.
\item[(b)]  In section \ref{sec:class}, we shall consider a more detailed and
global analysis of (\ref{eq:ca10}) for a given Stieltjes measure
$df$. Indeed, when $df$ is given, then the second term on the RHS
in (\ref{eq:ca10}) will typically contain contributions from other
measures $\nu$, mutually singular, and each $\nu$ relatively singular
w.r.t. $\mu$.
\end{enumerate}
\end{rem}

\begin{lem}
\label{lem:T1}For all $f,g\in\mathscr{D}_{1}$, it holds that 
\begin{equation}
\nabla_{\mu}\left(fg\right)=f\nabla_{\mu}g+\left(\nabla_{\mu}f\right)g,\:\text{Leibnitz' rule}\label{eq:leibniz}
\end{equation}
and
\begin{equation}
\left(fg\right)\left(1\right)-\left(fg\right)\left(0\right)=\left\langle \nabla_{\mu}f,g\right\rangle _{L^{2}\left(\mu\right)}+\left\langle f,\nabla_{\mu}g\right\rangle _{L^{2}\left(\mu\right)}.\label{eq:gr1}
\end{equation}
\end{lem}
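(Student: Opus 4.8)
The plan is to reduce both assertions to a single integration-by-parts identity for the $\mu$-primitives that constitute $\mathscr{D}_{1}$, and then to read off the Leibnitz rule from it via \lemref{C1}. Recall that, by (\ref{eq:Dmax}) together with \lemref{C1}, membership $h\in\mathscr{D}_{1}$ is equivalent to the statement that the Stieltjes measure $dh$ is absolutely continuous with respect to $\mu$, with $dh=(\nabla_{\mu}h)\,d\mu$; concretely, $h(y)-h(x)=\int_{x}^{y}\nabla_{\mu}h\,d\mu$ for all $x<y$. Since $\mu$ is non-atomic, the $\mu$-primitive of an $L^{2}(\mu)\subset L^{1}(\mu)$ density is continuous, so (as already noted) $\mathscr{D}_{1}\subset C([0,1])$ and every $f\in\mathscr{D}_{1}$ is bounded on $[0,1]$; I will use this boundedness below.

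The core step is to prove, for $f,g\in\mathscr{D}_{1}$, the identity
\begin{equation}
f(y)g(y)-f(x)g(x)=\int_{x}^{y}\bigl(f\,\nabla_{\mu}g+g\,\nabla_{\mu}f\bigr)\,d\mu,\qquad 0\le x<y\le 1.\label{eq:plan-ibp}
\end{equation}
I would establish (\ref{eq:plan-ibp}) directly by Fubini. Writing $f(t)=f(x)+\int_{x}^{t}\nabla_{\mu}f\,d\mu$ and $g(t)=g(x)+\int_{x}^{t}\nabla_{\mu}g\,d\mu$ and substituting into the right-hand side, the ``constant'' parts produce $f(x)\bigl(g(y)-g(x)\bigr)+g(x)\bigl(f(y)-f(x)\bigr)$, while the two remaining double integrals, after swapping the order of integration in one of them, combine into the integral of $\nabla_{\mu}f(s)\,\nabla_{\mu}g(t)$ over the two triangles $\{x\le s\le t\le y\}$ and $\{x\le t\le s\le y\}$. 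This is the one place where the hypothesis on $\mu$ is essential: the two triangles cover the full square $[x,y]^{2}$ and overlap only on the diagonal $\{s=t\}$, which is $(\mu\times\mu)$-null precisely because $\mu$ is non-atomic. Hence the two double integrals add to $\bigl(\int_{x}^{y}\nabla_{\mu}f\,d\mu\bigr)\bigl(\int_{x}^{y}\nabla_{\mu}g\,d\mu\bigr)=\bigl(f(y)-f(x)\bigr)\bigl(g(y)-g(x)\bigr)$, and collecting terms yields exactly $f(y)g(y)-f(x)g(x)$. (Equivalently one may invoke the classical integration-by-parts formula for continuous functions of bounded variation, whose jump-correction term vanishes by continuity; the Fubini computation is the self-contained version, and it isolates where non-atomicity enters.)

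With (\ref{eq:plan-ibp}) in hand both conclusions follow quickly. For (\ref{eq:leibniz}) I first note that $fg\in\mathscr{D}_{1}$: since $f,g$ are continuous, hence bounded, on $[0,1]$, the function $f\,\nabla_{\mu}g+g\,\nabla_{\mu}f$ lies in $L^{2}(\mu)$, and (\ref{eq:plan-ibp}) exhibits it as the $\mu$-density of the Stieltjes measure $d(fg)$. Thus by \lemref{C1} (or directly by the defining property (\ref{eq:Dmax})) we read off $\nabla_{\mu}(fg)=f\,\nabla_{\mu}g+g\,\nabla_{\mu}f$. For the boundary identity (\ref{eq:gr1}) I would simply set $x=0$, $y=1$ in (\ref{eq:plan-ibp}) and recognize the right-hand side as the sum of the two $L^{2}(\mu)$-pairings $\left\langle \nabla_{\mu}f,g\right\rangle _{L^{2}(\mu)}+\left\langle f,\nabla_{\mu}g\right\rangle _{L^{2}(\mu)}$, the identification being immediate for the real-valued functions at hand.

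The main obstacle—indeed essentially the only nontrivial point—is the cross-term analysis in (\ref{eq:plan-ibp}): one must correctly produce the second-order contribution $\bigl(f(y)-f(x)\bigr)\bigl(g(y)-g(x)\bigr)$ and see that it is exactly the mass of $\nabla_{\mu}f\otimes\nabla_{\mu}g$ on the square $[x,y]^{2}$ with the diagonal removed. Everything else is bookkeeping, but this is precisely the step that would fail for a measure with atoms, where a genuine jump term $\sum\Delta f\,\Delta g$ would survive and the clean product rule (\ref{eq:leibniz}) would break down.
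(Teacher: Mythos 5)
Your proof is correct, and its core computation is essentially the paper's own second proof of \lemref{T1}: both rest on expanding the $\mu$-primitive representations of $f$ and $g$, applying Fubini, and splitting the square into the two triangles $\left\{ s\leq t\right\} $ and $\left\{ t\leq s\right\} $. The differences are organizational, and they work in your favor. First, you prove the two-point identity on an arbitrary subinterval $\left[x,y\right]$ rather than only at the endpoints $0$ and $1$; this lets you read off the Leibnitz rule (\ref{eq:leibniz}) directly from the definition (\ref{eq:Dmax}) (after checking $f\nabla_{\mu}g+g\nabla_{\mu}f\in L^{2}\left(\mu\right)$ via boundedness), whereas the paper's second proof only yields the boundary identity (\ref{eq:gr1}), and the paper obtains (\ref{eq:leibniz}) from its first proof, the Stieltjes integration-by-parts argument that you mention parenthetically as the alternative route. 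Second, you make explicit where non-atomicity enters: the diagonal is $\left(\mu\times\mu\right)$-null, so the two closed triangles contribute exactly the full square $\left(f\left(y\right)-f\left(x\right)\right)\left(g\left(y\right)-g\left(x\right)\right)$. The paper's second proof uses this silently when it writes $\int_{0}^{1}=\int_{0}^{t}+\int_{t}^{1}$, which would double-count $\left\{ t\right\} $ if $\mu\left(\left\{ t\right\} \right)>0$. One minor remark: like the paper's statement, your version of (\ref{eq:gr1}) ignores complex conjugation in the $L^{2}\left(\mu\right)$ inner products; your restriction to real-valued functions handles this, and the conjugate appears in the paper only later, in the proof of \thmref{Tadj}.
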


\begin{proof}
In our considerations below we make use of (\ref{eq:Dmax}), and the
definition (\ref{eq:C2}) for the new \textquotedblleft $\mu$-derivative.\textquotedblright{}
And we further make use of basic facts for the corresponding Stieltjes
measures; in particular the integration by parts formula for Stieltjes
measures. 

Details: If $f,g\in\mathscr{D}_{1}$ (see (\ref{eq:Dmax})), then
\begin{align*}
\int_{0}^{x}f\nabla_{\mu}g\,d\mu & =\int_{0}^{x}fdg\\
 & =fg\big|_{0}^{x}-\int_{0}^{x}g\,df\\
 & =fg\big|_{0}^{x}-\int_{0}^{x}g\nabla_{\mu}f\,d\mu.
\end{align*}
That is, 
\[
f\left(x\right)g\left(x\right)-f\left(0\right)g\left(0\right)=\int_{0}^{x}\left(f\nabla_{\mu}g+g\nabla_{\mu}f\right)d\mu,
\]
so that 
\[
\nabla_{\mu}\left(fg\right)=f\nabla_{\mu}g+\left(\nabla_{\mu}f\right)g,
\]
and (\ref{eq:gr1}) also follows.
\end{proof}
The proof above relies on key facts for Stieltjes integrals which
might perhaps not be widely known. For the benefit of readers, we
have therefore included the following alternative proof:
\begin{proof}[Second proof of \lemref{T1}]
Let $f,g\in\mathscr{D}_{1}$ as above, then 
\begin{align*}
f\left(1\right)g\left(1\right) & =\left(f\left(0\right)+\int_{0}^{1}\nabla_{\mu}fd\mu\right)\left(g\left(0\right)+\int_{0}^{1}\nabla_{\mu}gd\mu\right)\\
 & =f\left(0\right)g\left(0\right)+f\left(0\right)\int_{0}^{1}\nabla_{\mu}gd\mu+g\left(0\right)\int_{0}^{1}\nabla_{\mu}fd\mu\\
 & \quad+\left(\int_{0}^{1}\nabla_{\mu}fd\mu\right)\left(\int_{0}^{1}\nabla_{\mu}gd\mu\right),
\end{align*}
where
\begin{eqnarray*}
 &  & \left(\int_{0}^{1}\nabla_{\mu}fd\mu\right)\left(\int_{0}^{1}\nabla_{\mu}gd\mu\right)\\
 & = & \int_{0}^{1}\int_{0}^{1}\left(\nabla_{\mu}f\right)\left(s\right)\left(\nabla_{\mu}g\right)\left(t\right)\mu\left(ds\right)\mu\left(dt\right)\\
 & = & \int_{0}^{1}\left[\int_{0}^{t}\left(\nabla_{\mu}f\right)\left(s\right)\mu\left(ds\right)+\int_{t}^{1}\left(\nabla_{\mu}f\right)\left(s\right)\mu\left(ds\right)\right]\left(\nabla_{\mu}g\right)\left(t\right)\mu\left(dt\right)\\
 & = & \int_{0}^{1}\left[f\left(t\right)-f\left(0\right)\right]\left(\nabla_{\mu}g\right)\left(t\right)\mu\left(dt\right)+\int_{0}^{1}\left[\int_{t}^{1}\left(\nabla_{\mu}f\right)\left(s\right)\mu\left(ds\right)\right]\left(\nabla_{\mu}g\right)\left(t\right)\mu\left(dt\right)\\
 & = & \int_{0}^{1}\left[f\left(t\right)-f\left(0\right)\right]\left(\nabla_{\mu}g\right)\left(t\right)\mu\left(dt\right)+\int_{0}^{1}\left(\nabla_{\mu}f\right)\left(s\right)\left(g\left(s\right)-g\left(0\right)\right)\mu\left(ds\right).
\end{eqnarray*}
Thus, 
\[
\left(fg\right)\left(1\right)-\left(fg\right)\left(0\right)=\int_{0}^{1}f\nabla_{\mu}gd\mu+\int_{0}^{1}g\nabla_{\mu}fd\mu
\]
which is (\ref{eq:gr1}).
\end{proof}
\needspace{1em}
\begin{rem}
~
\begin{enumerate}
\item[(a)] In a $C^{*}$-algebraic framework, operators with dense domain and
satisfying a general Leibnitz rule of the form (\ref{eq:leibniz})
occur under the name \textquotedblleft unbounded derivations.\textquotedblright{}
They arise in a wider applied context, beyond that of fractal analysis,
and have been extensively studied. They play an important role in
dynamics, see e.g., \cite{MR887100}.
\item[(b)] A non-atomic measure $\mu$ is fixed, and we assume that $\mu$ is
supported in the unit interval $\left[0,1\right]$. We now turn to
the corresponding boundary-value problem for the operator $\nabla_{\mu}$,
see (\ref{eq:Tex}). Its operator theory will be identified relative
to the Hilbert space $L^{2}\left(\mu\right)$. To emphasize choice
of Hilbert space, we shall use the terminology $T_{\mu}$ for the
operator, and then add subscripts to indicate domains. The theory
of von Neumann (see \cite{MR1009163}) of selfadjoint extensions of
symmetric operators will be used. Only, for convenience, we shall
use the equivalent formulation in the form where we consider instead
skew-adjoint extensions of a fixed (minimal) skew-symmetric operator
with dense domain.
\end{enumerate}
\end{rem}

\begin{defn}
Set $T_{\mu,0}:L^{2}\left(\mu\right)\rightarrow L^{2}\left(\mu\right)$
by 
\begin{equation}
T_{\mu,0}=\nabla_{\mu}\big|_{\mathscr{D}_{0}}\label{eq:Tmin}
\end{equation}
where 
\begin{align}
\mathscr{D}_{0} & =C_{c}\left(J\right)\cap\mathscr{D}_{1}.\label{eq:Dmin}
\end{align}
\end{defn}

\begin{thm}
\label{thm:Tadj}The operator $T_{\mu,0}$ from (\ref{eq:Tmin})--(\ref{eq:Dmin})
is skew-symmetric, densely defined in the complex Hilbert space $L^{2}\left(\mu\right)$.
Moreover, $T_{\mu,0}$ has deficiency indices $\left(1,1\right)$,
and the corresponding skew-adjoint extensions are specified by 
\begin{equation}
dom\left(T_{\mu,\alpha}\right)=\left\{ f\in\mathscr{D}_{1}:f\left(1\right)=\alpha f\left(0\right)\right\} ,\;\left|\alpha\right|=1,\label{eq:Dex}
\end{equation}
and 
\begin{equation}
T_{\mu,\alpha}=\nabla_{\mu}\big|_{dom\left(T_{\mu,\alpha}\right)}.\label{eq:Tex}
\end{equation}
\end{thm}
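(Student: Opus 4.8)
The plan is to run the von Neumann deficiency-space theory in its skew-adjoint normalization, as flagged in Remark (b) above, in the order: skew-symmetry and density, identification of the adjoint, the index count, and finally the boundary conditions. First I would dispose of skew-symmetry and density. Skew-symmetry is read off directly from \lemref{T1}: for $f,g\in\mathscr{D}_0$ the endpoint values vanish, so the left-hand side of \eqref{eq:gr1} is zero and $\langle T_{\mu,0}f,g\rangle_{L^2(\mu)}=-\langle f,T_{\mu,0}g\rangle_{L^2(\mu)}$, i.e. $T_{\mu,0}\subset -T_{\mu,0}^{*}$. Density of $\mathscr{D}_0$ in $L^2(\mu)$ is where non-atomicity of $\mu$ enters: since $g_\mu(x)=\mu([0,x])$ is continuous, for $0<a<b<1$ one produces $f\in\mathscr{D}_0$ of the form $f(x)=\int_0^x h\,d\mu$ with $h\in L^2(\mu)$ supported in $(0,1)$ and $\int h\,d\mu=0$, approximating $\mathbf{1}_{[a,b]}$ in $L^2(\mu)$; such step functions span a dense subspace.

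Next I would identify the adjoint as $T_{\mu,0}^{*}=-\nabla_\mu$ on the maximal domain $\mathscr{D}_1$ of \eqref{eq:Dmax}. The inclusion $\mathscr{D}_1\subset dom(T_{\mu,0}^{*})$ is again \eqref{eq:gr1}: for $g\in\mathscr{D}_1$ and $f\in\mathscr{D}_0$ the boundary term drops and $\langle g,T_{\mu,0}f\rangle_{L^2(\mu)}=\langle-\nabla_\mu g,f\rangle_{L^2(\mu)}$. The reverse inclusion is the main obstacle. Given $g\in dom(T_{\mu,0}^{*})$ with $T_{\mu,0}^{*}g=\eta$, the defining identity is $\langle g,\nabla_\mu f\rangle_{L^2(\mu)}=\langle\eta,f\rangle_{L^2(\mu)}$ for all $f\in\mathscr{D}_0$. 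Because $\int_0^1\nabla_\mu f\,d\mu=f(1)-f(0)=0$, the elements $\nabla_\mu f$ fill out a dense subspace of the $\mu$-mean-zero hyperplane of $L^2(\mu)$; a Fubini rearrangement then rewrites the identity as $\langle g,h\rangle_{L^2(\mu)}=\langle E,h\rangle_{L^2(\mu)}$ with $E(x)=-\int_0^x\eta\,d\mu$ for all such $h$, forcing $g-E$ to be $\mu$-a.e. constant. This du Bois-Reymond--type regularity step places $g$ in $\mathscr{D}_1$ with $\nabla_\mu g=-\eta$, so $T_{\mu,0}^{*}=-\nabla_\mu\big|_{\mathscr{D}_1}$.

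With the adjoint in hand I would compute the deficiency spaces $N_\pm=\ker(T_{\mu,0}^{*}\mp I)$. The equation $T_{\mu,0}^{*}g=\pm g$ becomes $\nabla_\mu g=\mp g$, i.e. the integral equation $g(x)=g(0)\mp\int_0^x g\,d\mu$; since $\mu$ is non-atomic the chain rule $\nabla_\mu(\psi\circ g_\mu)=\psi'\circ g_\mu$ with $\psi(t)=e^{\mp t}$ exhibits the unique solution $g(x)=g(0)\,e^{\mp g_\mu(x)}$. Hence $\dim N_+=\dim N_-=1$, so $T_{\mu,0}$ has deficiency indices $(1,1)$; by von Neumann's extension theory \cite{MR1009163} the skew-adjoint extensions exist and form a single circle.

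Finally I would translate this circle into the boundary conditions \eqref{eq:Dex}. Applying \eqref{eq:gr1} to $f$ and $\overline{g}$ yields the boundary form
\[
\omega(f,g):=\langle\nabla_\mu f,g\rangle_{L^2(\mu)}+\langle f,\nabla_\mu g\rangle_{L^2(\mu)}=f(1)\overline{g(1)}-f(0)\overline{g(0)},\qquad f,g\in\mathscr{D}_1,
\]
a nondegenerate Hermitian form of signature $(1,1)$ on the boundary data $(f(0),f(1))\in\mathbb{C}^2$, whose radical is $dom(\overline{T_{\mu,0}})=\{f\in\mathscr{D}_1:f(0)=f(1)=0\}$. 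Skew-adjoint extensions then correspond exactly to maximal $\omega$-isotropic (Lagrangian) subspaces of $\mathbb{C}^2$, which are precisely the lines $\{(a_0,a_1):a_1=\alpha a_0\}$ with $|\alpha|=1$; this reproduces \eqref{eq:Dex}--\eqref{eq:Tex} and simultaneously shows that each $T_{\mu,\alpha}$ is skew-adjoint (substituting $f(1)=\alpha f(0)$ and $g(1)=\alpha g(0)$ into $\omega$ gives $0$) and that these exhaust all extensions, consistently with the index $(1,1)$. I expect the two genuine difficulties to be the regularity step of the second paragraph and the verification that the minimal domain is exactly the radical of $\omega$; both are standard \textquotedblleft boundary triple\textquotedblright{} facts but must be rechecked against the non-smooth, $\mu$-adapted derivative $\nabla_\mu$ used here.
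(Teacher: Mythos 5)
Your proof is correct, and it takes a genuinely more complete route than the paper's own argument, though both pivot on the same identity. The paper's entire proof of \thmref{Tadj} is the sesquilinear Green's identity $\left\langle \nabla_{\mu}f,g\right\rangle _{L^{2}\left(\mu\right)}+\left\langle f,\nabla_{\mu}g\right\rangle _{L^{2}\left(\mu\right)}=\left(\overline{f}g\right)\left(1\right)-\left(\overline{f}g\right)\left(0\right)$ on $\mathscr{D}_{1}$, together with the remark that the right-hand side vanishes iff $f,g$ satisfy (\ref{eq:Dex}); that is your final maximal-isotropic step in compressed form, and nothing else is proved there. The components you supply are exactly what the paper leaves elsewhere or omits: (i) the identification $T_{\mu,0}^{*}=-\nabla_{\mu}\big|_{\mathscr{D}_{1}}$ appears in the paper only as a separate lemma after the spectral theorem, proved by essentially your Fubini/du Bois--Reymond argument; (ii) density of $\mathscr{D}_{0}$ is never addressed there; and (iii) the deficiency-index claim $\left(1,1\right)$ is never actually verified in the paper for singular $\mu$ --- \corref{C13} computes the deficiency spaces only under the hypothesis $\mu\ll\lambda$, as $span\left\{ \exp\left(\pm\int M\,dx\right)\right\} $. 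Your explicit deficiency vectors $e^{\mp g_{\mu}\left(x\right)}$, obtained from the chain rule $\nabla_{\mu}\left(\psi\circ g_{\mu}\right)=\psi'\circ g_{\mu}$, are precisely the generalization of that corollary to arbitrary non-atomic $\mu$, and they are what makes the index count --- hence the existence and circle-parametrization of the skew-adjoint extensions --- rigorous rather than asserted. Two small points to tighten: uniqueness for the Volterra equation $g\left(x\right)=g\left(0\right)\mp\int_{0}^{x}g\,d\mu$ is cleanest via the Leibniz rule (\ref{eq:leibniz}), since it gives $\nabla_{\mu}\left(ge^{\pm g_{\mu}}\right)=0$ and hence that $ge^{\pm g_{\mu}}$ is constant; and (\ref{eq:gr1}) is stated in the paper without complex conjugates, so in the complex Hilbert space you should quote it in the conjugated form above, as the paper itself does inside its proof. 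Of your two flagged ``boundary triple'' verifications, the regularity step is the one you already prove in full, and $dom\left(\overline{T_{\mu,0}}\right)=\left\{ f\in\mathscr{D}_{1}:f\left(0\right)=f\left(1\right)=0\right\} $ then follows from that same adjoint computation together with surjectivity of $f\mapsto\left(f\left(0\right),f\left(1\right)\right)$ on $\mathscr{D}_{1}$, witnessed for instance by the constants and by $g_{\mu}$ itself.
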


\begin{proof}
From the identity 
\[
\left\langle \nabla_{\mu}f,g\right\rangle _{L^{2}\left(\mu\right)}+\left\langle f,\nabla_{\mu}g\right\rangle _{L^{2}\left(\mu\right)}=\left(\overline{f}g\right)\left(1\right)-\left(\overline{f}g\right)\left(0\right),
\]
which is valid for all $f,g\in\mathscr{D}_{1}$, it follows that the
right-hand side vanishes if and only if $f,g$ are in $dom\left(T_{\mu,\alpha}\right)$,
see (\ref{eq:Dex}). 
\end{proof}
For more details on \emph{extensions of skew-symmetric operators},
we refer to \cite{MR1009163,MR2953553}.

We now turn to the \emph{unitary one-parameter groups} which are generated
by the skew-adjoint extension operators above, from \thmref{Tadj},
eq. (\ref{eq:Tex}).

Consider the two unitary one parameter groups with the respective
skew adjoint generators, and periodic boundary condition $f\left(0\right)=f\left(1\right)$.
\begin{equation}
L^{2}\left(\left[0,1\right],\lambda\right)\ni\psi\xrightarrow{\;U_{\lambda}\left(t\right)\;}\psi\left(\left[\cdot+t\right]_{F}\right)\in L^{2}\left(\left[0,1\right],\lambda\right)
\end{equation}
where $\left[\cdot\right]_{F}$ denotes the fractional part of a real
number. 

Let $\mu$ be a non-atomic Borel measure on $\left[0,1\right]$, and
let 
\begin{equation}
g\left(x\right)=\mu\left(\left[0,x\right]\right).\label{eq:tm1-1}
\end{equation}

\begin{figure}[H]
\includegraphics[width=0.4\paperwidth]{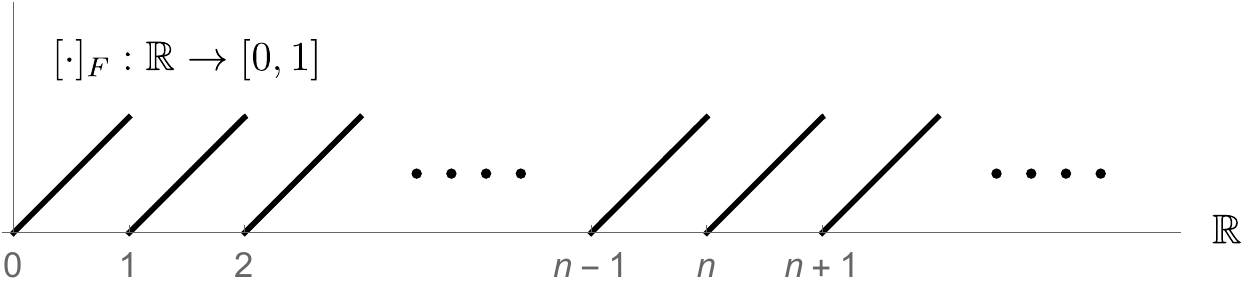}

\caption{$\left[\cdot\right]_{F}:\mathbb{R}\rightarrow\left[0,1\right]$}
\end{figure}

\begin{equation}
\xymatrix{ &  & \psi\left(\left[\cdot+t\right]_{F}\right)\ar[rr]^{W_{\mu}} &  & \psi\left(\left[g\left(\cdot\right)+t\right]_{F}\right)\\
\psi\ar@/^{1.3pc}/[rru]^{U_{\lambda}\left(t\right)}\ar@/_{1.3pc}/[rrd]_{W_{\mu}}\\
 &  & \psi\circ g\ar@/_{1.3pc}/[rruu]_{U_{\mu}\left(t\right)}
}
\end{equation}
We have 
\begin{eqnarray}
U_{\mu}\left(t\right)\left(\psi\circ g\right)\left(\cdot\right) & = & \psi\left(\left[g\left(\cdot\right)+t\right]_{F}\right)\nonumber \\
 & \Updownarrow\\
U_{\mu}\left(t\right)W_{\mu} & = & W_{\mu}U_{\lambda}\left(t\right),\nonumber 
\end{eqnarray}
and summarized in the diagram below:
\[
\xymatrix{U_{\lambda}\left(t\right)\psi\in L^{2}\left(\lambda\right)\ar[rr]^{W_{\mu}} &  & U_{\mu}\left(t\right)f\in L^{2}\left(\mu\right)\\
\psi\in L^{2}\left(\lambda\right)\ar[u]^{U_{\lambda}\left(t\right)}\ar[rr]_{W_{\mu}} &  & f\in L^{2}\left(\mu\right)\ar[u]_{U_{\mu}\left(t\right)}
}
\]

\begin{lem}
\label{lem:tc}Fix $\mu$ and $g$, and set  $W_{\mu}\psi=\psi\circ g$.
Then TFAE:
\begin{enumerate}
\item $W_{\mu}U_{\lambda}\left(t\right)=U_{\mu}\left(t\right)W_{\mu}:L^{2}\left(\lambda\right)\rightarrow L^{2}\left(\mu\right)$
\item $U_{\mu}\left(t\right)=W_{\mu}U_{\lambda}\left(t\right)W_{\mu}^{*}:L^{2}\left(\mu\right)\rightarrow L^{2}\left(\mu\right)$
\item $U_{\lambda}\left(t\right)=W_{\mu}^{*}U_{\mu}\left(t\right)W_{\mu}:L^{2}\left(\lambda\right)\rightarrow L^{2}\left(\lambda\right)$
\end{enumerate}
\end{lem}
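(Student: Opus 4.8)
The plan is to reduce all three assertions to the single structural fact that $W_{\mu}\colon L^{2}(\lambda)\to L^{2}(\mu)$, $W_{\mu}\psi=\psi\circ g$, is a \emph{unitary} operator, with $W_{\mu}^{*}=W_{\mu}^{-1}$. Once this is in hand the equivalence is purely algebraic, since each of the three statements is a conjugation form of the same intertwining relation. Indeed, condition (2), $U_{\mu}(t)=W_{\mu}U_{\lambda}(t)W_{\mu}^{*}$, is obtained from condition (1), $W_{\mu}U_{\lambda}(t)=U_{\mu}(t)W_{\mu}$, by right multiplication by $W_{\mu}^{*}$ together with $W_{\mu}W_{\mu}^{*}=I_{L^{2}(\mu)}$; conversely (1) follows from (2) by right multiplication by $W_{\mu}$ and $W_{\mu}^{*}W_{\mu}=I_{L^{2}(\lambda)}$. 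Symmetrically, condition (3), $U_{\lambda}(t)=W_{\mu}^{*}U_{\mu}(t)W_{\mu}$, comes from (1) by left multiplication by $W_{\mu}^{*}$ and $W_{\mu}^{*}W_{\mu}=I$, and (1) is recovered from (3) by left multiplication by $W_{\mu}$ and $W_{\mu}W_{\mu}^{*}=I$. I would therefore organize the write-up as the cycle $(1)\Rightarrow(2)\Rightarrow(1)$ and $(1)\Rightarrow(3)\Rightarrow(1)$, all contingent on the unitarity of $W_{\mu}$.

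First I would establish that $W_{\mu}$ is an isometry, which is the change-of-variables identity $g_{*}\mu=\lambda$: since $\mu$ is non-atomic, $g(x)=\mu([0,x])$ is continuous and monotone and transports $\mu$ to Lebesgue measure on $[0,1]$. Hence for $\psi,\phi\in L^{2}(\lambda)$,
\[
\left\langle W_{\mu}\psi,W_{\mu}\phi\right\rangle_{L^{2}(\mu)}=\int_{0}^{1}\overline{\psi(g(x))}\,\phi(g(x))\,d\mu(x)=\int_{0}^{1}\overline{\psi(u)}\,\phi(u)\,d\lambda(u)=\left\langle \psi,\phi\right\rangle_{L^{2}(\lambda)},
\]
so that $W_{\mu}^{*}W_{\mu}=I_{L^{2}(\lambda)}$.

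Second --- and this is the step I expect to be the main obstacle --- I would prove that $W_{\mu}$ is onto, equivalently that $W_{\mu}W_{\mu}^{*}=I_{L^{2}(\mu)}$. The range of the isometry $W_{\mu}$ is exactly $L^{2}$ of the sub-$\sigma$-algebra $\sigma(g)$ generated by $g$, so surjectivity amounts to showing that $g$ is essentially injective on $\mathrm{supp}(\mu)$, i.e. that $\sigma(g)$ exhausts the Borel algebra modulo $\mu$-null sets. The only coincidences $g(x_{1})=g(x_{2})$ with $x_{1}<x_{2}$ both in $\mathrm{supp}(\mu)$ arise when $(x_{1},x_{2})$ is one of the at most countably many gaps of the support, that is, when $x_{1},x_{2}$ are gap-endpoints; by non-atomicity this countable set is $\mu$-null, while off it $g$ is strictly increasing and hence injective. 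Thus $W_{\mu}$ is unitary. This is precisely where the hypothesis that $\mu$ is non-atomic (not merely $\sigma$-finite) enters: for a measure with atoms the gap-endpoints could carry positive mass, $W_{\mu}$ would be only a proper isometry, and the reverse implications $(2)\Rightarrow(1)$ and $(3)\Rightarrow(1)$ would fail. With unitarity secured, the elementary operator identities of the first paragraph close the argument, and one reads off for free that $U_{\mu}(t)=W_{\mu}U_{\lambda}(t)W_{\mu}^{*}$ is a unitary one-parameter group, consistent with its being generated by the periodic skew-adjoint extension of \thmref{Tadj}.
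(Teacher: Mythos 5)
Your proof is correct and takes the same route the paper itself takes implicitly: the paper gives no separate proof of this lemma, relying instead on the assertion (in the paragraph immediately following it, together with $\mu\circ g^{-1}=\lambda$ and the disintegration formula for $W_{\mu}^{*}$) that $W_{\mu}$ is an isometric isomorphism of $L^{2}(\lambda)$ onto $L^{2}(\mu)$, which is exactly the unitarity you establish before carrying out the elementary conjugation algebra. You in fact supply more detail than the paper does, notably the surjectivity argument via essential injectivity of $g$ on $\mathrm{supp}(\mu)$ (your side remark that atoms would leave $W_{\mu}$ a ``proper isometry'' is the only imprecision: with atoms $g_{*}\mu\neq\lambda$, so $W_{\mu}$ would not even be isometric).
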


Starting with $\mu$, specified as before, we then set $g=g_{\mu}$,
$g\left(x\right):=\mu\left(\left[0,x\right]\right)$, the ``\emph{cumulative
distribution}''. It follows that then the operator $W_{\mu}$, given
by $W_{\mu}\psi:=\psi\circ g$, will be an isometric isomorphism of
$L^{2}\left(\lambda\right)$ onto $L^{2}\left(\mu\right)$, with adjoint
$W_{\mu}^{*}:L^{2}\left(\mu\right)\rightarrow L^{2}\left(\lambda\right)$,
given by (\ref{eq:tc3}). We add that a detailed analysis of this
operator $W_{\mu}$, and its applications, will be undertaken below. 
\begin{cor}[Time-change]
 We have 
\begin{align}
\left(U_{\mu}\left(t\right)f\right)\left(x\right) & =W_{\mu}U_{\lambda}\left(t\right)W_{\mu}^{*}f\left(x\right)\\
 & =W_{\mu}^{*}f\left(\left[g\left(x\right)+t\right]_{F}\right),\label{eq:tc2}
\end{align}
where
\begin{equation}
W_{\mu}\psi\left(\cdot\right)=\psi\circ g,
\end{equation}
and
\begin{align}
\left(W_{\mu}^{*}f\right)\left(y\right) & =\int_{g^{-1}\left(\left\{ y\right\} \right)}f\,d\rho_{y},\label{eq:tc3}
\end{align}
and so
\begin{equation}
W_{\mu}^{*}f\left(\left[g\left(x\right)+t\right]_{F}\right)=\int_{g^{-1}\left(\left\{ \left[g\left(x\right)+t\right]_{F}\right\} \right)}fd\rho_{\left[g\left(x\right)+t\right]_{F}}.
\end{equation}
\end{cor}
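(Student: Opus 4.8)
The plan is to read the Corollary as a consequence of two facts: that $W_{\mu}$ is unitary, together with \lemref{tc}, plus an explicit disintegration of $\mu$ along $g$. Throughout I normalize $\mu\left(\left[0,1\right]\right)=1$, so that $g\left(x\right)=\mu\left(\left[0,x\right]\right)$ is continuous (by non-atomicity of $\mu$) and nondecreasing with $g\left(0\right)=0$ and $g\left(1\right)=1$; this is what makes the period-$1$ translation picture close up.

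First I would prove the pushforward identity $g_{*}\mu=\lambda$. For $y\in\left[0,1\right]$, monotonicity and continuity of $g$ give $g^{-1}\left(\left[0,y\right]\right)=\left[0,x_{y}\right]$ with $g\left(x_{y}\right)=y$, whence $\left(g_{*}\mu\right)\left(\left[0,y\right]\right)=\mu\left(\left[0,x_{y}\right]\right)=g\left(x_{y}\right)=y$; since such intervals generate $\mathscr{B}$, this yields $g_{*}\mu=\lambda$. The change-of-variables formula then gives $\left\Vert W_{\mu}\psi\right\Vert _{L^{2}\left(\mu\right)}^{2}=\int\left|\psi\circ g\right|^{2}d\mu=\int\left|\psi\right|^{2}d\lambda=\left\Vert \psi\right\Vert _{L^{2}\left(\lambda\right)}^{2}$, so $W_{\mu}$ is isometric and $W_{\mu}^{*}W_{\mu}=I$ on $L^{2}\left(\lambda\right)$.

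Next I would establish surjectivity, hence unitarity and $W_{\mu}W_{\mu}^{*}=I$. The nondegenerate fibers $g^{-1}\left(\left\{ y\right\} \right)$ are exactly the maximal intervals on which $g$ is constant, i.e., the gaps in the support of $\mu$; being pairwise disjoint open intervals, there are at most countably many, so the set of such $y$ is $\lambda$-null and their union is $\mu$-null. Thus for $\lambda$-a.e.\ $y$ the fiber is a single point, $g$ is injective off a $\mu$-null set, the sub-$\sigma$-algebra $g^{-1}\left(\mathscr{B}\right)$ agrees with $\mathscr{B}$ modulo $\mu$-null sets, and every $f\in L^{2}\left(\mu\right)$ equals some $\psi\circ g$ $\mu$-a.e.; hence $ran\left(W_{\mu}\right)=L^{2}\left(\mu\right)$. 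For the adjoint I would invoke the disintegration of $\mu$ along $g$: a measurable family of probability measures $\left\{ \rho_{y}\right\} $ with $\rho_{y}$ carried by $g^{-1}\left(\left\{ y\right\} \right)$ and $\int f\,d\mu=\int_{0}^{1}\left(\int_{g^{-1}\left(\left\{ y\right\} \right)}f\,d\rho_{y}\right)dy$. Since $\psi\circ g\equiv\psi\left(y\right)$ on the fiber over $y$, pairing gives $\left\langle W_{\mu}\psi,f\right\rangle _{L^{2}\left(\mu\right)}=\int_{0}^{1}\psi\left(y\right)\overline{\int_{g^{-1}\left(\left\{ y\right\} \right)}f\,d\rho_{y}}\,dy=\left\langle \psi,W_{\mu}^{*}f\right\rangle _{L^{2}\left(\lambda\right)}$, which reads off (\ref{eq:tc3}); for a.e.\ $y$ the fiber is a point and $\rho_{y}$ a Dirac mass, so $W_{\mu}^{*}f$ is a genuine ``restriction to the fiber.''

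Finally, with $W_{\mu}$ unitary I set $U_{\mu}\left(t\right):=W_{\mu}U_{\lambda}\left(t\right)W_{\mu}^{*}$, a strongly continuous unitary group conjugate to $U_{\lambda}\left(t\right)$; by \lemref{tc} this is the first displayed equality and is equivalent to the intertwining $U_{\mu}\left(t\right)W_{\mu}=W_{\mu}U_{\lambda}\left(t\right)$. Unwinding the composition, $\left(U_{\lambda}\left(t\right)W_{\mu}^{*}f\right)\left(y\right)=\left(W_{\mu}^{*}f\right)\left(\left[y+t\right]_{F}\right)$, and applying $W_{\mu}$ (precomposition with $g$) evaluates this at $y=g\left(x\right)$, giving $\left(U_{\mu}\left(t\right)f\right)\left(x\right)=\left(W_{\mu}^{*}f\right)\left(\left[g\left(x\right)+t\right]_{F}\right)$; substituting (\ref{eq:tc3}) at the point $\left[g\left(x\right)+t\right]_{F}$ produces the last display. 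The main obstacle I anticipate is the surjectivity/essential-injectivity step together with a rigorous disintegration: one must confirm that the flat pieces of $g$ contribute a $\lambda$-null (and $\mu$-null) set, and verify measurability of $y\mapsto\rho_{y}$, so that (\ref{eq:tc3}) is well-defined $\lambda$-a.e.\ and the pairing above is justified.
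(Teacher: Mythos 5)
Your proposal is correct and takes essentially the same route as the paper, which presents this corollary without further argument as an immediate consequence of the unitarity of $W_{\mu}$ (asserted in the text preceding the corollary), the intertwining relations of \lemref{tc}, and the disintegration $\mu\left(\cdot\right)=\int_{J}\rho_{y}\left(\cdot\right)dy$ cited right after it. Your additions---the pushforward identity $\mu\circ g^{-1}=\lambda$, surjectivity of $W_{\mu}$ via the $\mu$-nullity and countability of the flat pieces of $g$, and the explicit adjoint computation through the disintegration---simply supply the details that the paper leaves to the reader and to the cited literature.
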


\begin{figure}
\includegraphics[width=0.35\columnwidth]{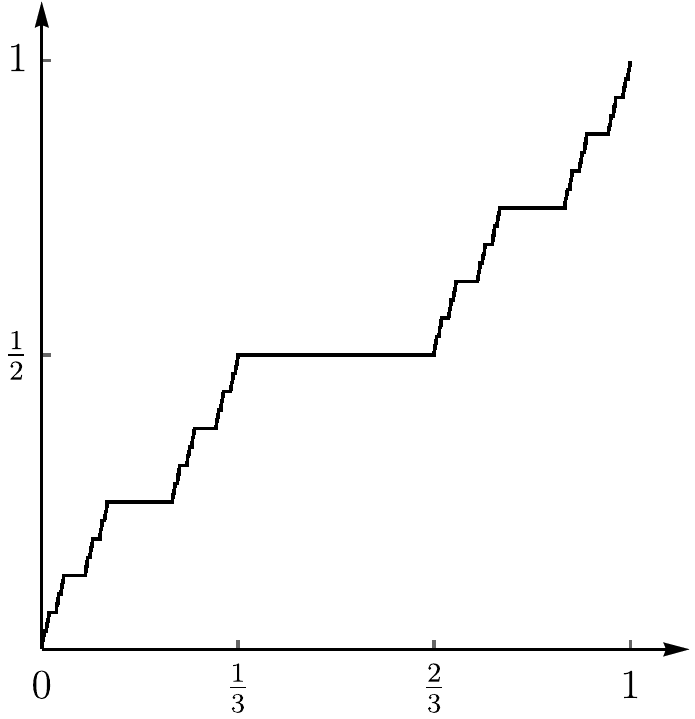}

\caption{\label{fig:devil}$g_{\mu_{3}}\left(\cdot\right)$ the \emph{middle
third Cantor measure} as a Stieltjes measure. See \remref{sm}.}
\end{figure}

In (\ref{eq:tc3}), $d\rho_{y}$ denote conditional measures. Since
$\mu\circ g^{-1}=\lambda$ we have conditional measures $\left\{ \rho_{y}\right\} _{y\in J}$
subject to the partition $\left\{ g^{-1}\left(\left\{ y\right\} \right)\right\} _{y\in J}$.
By the disintegration theorem, applied to $\mu$, we therefore get
the representation:
\begin{equation}
\mu\left(\cdot\right)=\int_{J}\rho_{y}\left(\cdot\right)dy.\label{eq:DT1}
\end{equation}
For details regarding (\ref{eq:DT1}), we refer readers to the literature,
e.g., \cite[ch2.  pg 13-21]{MR3793614} and \cite{MR0030584,MR0047744,MR0228654,MR0168697}. 
\begin{rem}[The middle-third Cantor measure]
\label{rem:sm}~
\begin{enumerate}
\item[(a)] For a detailed account of harmonic analyses on fractals, see \cite{MR3838440,MR3996038};
as well as the papers cited there. 

Below we discuss the special case when $\mu=\mu_{3}$ is assumed to
be the middle-third Cantor measure. In details, if the pair $\sigma_{0}$,
$\sigma_{1}$ denotes $\sigma_{0}\left(x\right)=\frac{x}{3}$, $\sigma_{1}\left(x\right)=\frac{x+2}{3}$,
$x\in\mathbb{R}$; then $\mu_{3}$ is the corresponding normalized
IFS-measure fixed by 
\begin{equation}
\mu_{3}=\frac{1}{2}\left(\mu_{3}\circ\sigma_{0}^{-1}+\mu_{3}\circ\sigma_{1}^{-1}\right),\label{eq:CAN1}
\end{equation}
supported in the Cantor set. The cumulative distribution function
of $\mu_{3}$ is the function $g\left(x\right):=\mu_{3}\left(\left[0,x\right]\right)$.
It is sketched in \figref{devil}.
\item[(b)]  We show in section \ref{subsec:aifs} (see especially \thmref{D4},
eq. (\ref{eq:Dd2}), and \figref{subd}) that the Cantor-measure $\mu$
solving equation (\ref{eq:CAN1}) arises as a special case of a wider
class of self-similar measures, also called IFS-measures. In the general
case of IFS-measures, the corresponding equation is (\ref{eq:Dd2}).
It is defined from a finite system of endomorphisms $\sigma_{i}$
(called function systems), and the corresponding iterated function
system (IFS)-measure $\mu$ will then arise as a Markov chain-average
(\figref{subd}) of its corresponding $\sigma_{i}$ transforms. In
this general case, the solution $\mu$ can be found, see \thmref{D4}:
Every IFS-measure $\mu$ allows a representation (\ref{eq:Dd8}),
defined as a pull-back of an infinite-product measure (\ref{eq:Dd5}).
\item[(c)]  Note that the function $g$ defined this way (see (\ref{eq:tm1-1}))
will have the following properties: It is monotone (increasing, or
more precisely non-decreasing). Moreover, by standard measure theory,
it follows that the initial measure $\mu$ will then agree with the
corresponding Stieltjes measure (here denoted $dg$), i.e., we have
$\mu=dg$. Hence it is possible to define branches of an inverse to
the function $g$, i.e., $g^{-1}$ defined a.e., w.r.t. $\mu$. Intuitively
we think of the function $g$ as a time-change, see (\ref{eq:tc2}). 
\item[(d)]  In the case when $\mu$ is the standard middle third Cantor measure,
then the corresponding function $g$ is illustrated in \figref{devil}
(often called \textquotedblleft the Devil\textquoteright s staircase.\textquotedblright )
At each iteration in the construction of $g$, we take it to be constant
on the omitted middle-third intervals.

In more details, let $\sigma_{0},\sigma_{1}:\left[0,1\right]\rightarrow\left[0,1\right]$
be given by 
\begin{equation}
\sigma_{0}\left(x\right)=\frac{x}{3},\quad\sigma_{1}\left(x\right)=\frac{x+2}{3}.\label{eq:CAN2}
\end{equation}
Then, for the Cantor set $C_{3}$, we have 
\[
\sigma_{0}\left(C_{3}\right)\dot{\cup}\sigma_{1}\left(C_{3}\right)=C_{3}.
\]
Using bit representations, we get 
\[
g^{-1}\left(\sum_{n=1}^{\infty}\frac{b_{n}}{2^{n}}\right)=\sum_{n=1}^{\infty}\frac{2b_{n}}{3^{n}},\quad b_{n}\in\left\{ 0,1\right\} .
\]
To see this, recall that every $x\in C_{3}$ has the following representation:
\begin{align*}
x & =\sum_{n=1}^{\infty}\frac{a_{n}}{3^{n}}\qquad\left(a_{n}\in\left\{ 0,2\right\} \right)\\
 & =\frac{a_{1}}{3}+\sum_{n=2}^{\infty}\frac{a_{n}}{3^{n}}\\
 & =\frac{a_{1}}{3}+\frac{1}{3}\underset{\text{shift of bits}}{\underbrace{\left(\frac{a_{2}}{3}+\frac{a_{3}}{3^{2}}+\cdots\right)}}.
\end{align*}

\item[(e)]  Let $\mu$ $(=\mu_{3})$ be the Cantor measure of (\ref{eq:CAN1})
above. We then get the following transformation of pairs of Borel
measures on the unit-interval $J$: $\mu\circ g^{-1}=\lambda_{1}$,
where $\lambda_{1}$ is Lebesgue measure on $J$.

Note, the defining properties of the Cantor measure $\mu$, $supp\left(\mu\right)=C_{3}$,
and the Lebesgue measure $\lambda$ are as follows: 
\begin{alignat*}{2}
\text{Cantor} &  & \quad & \int\varphi\,d\mu=\frac{1}{2}\left(\int\varphi\circ\sigma_{0}\,d\mu+\int\varphi\circ\sigma_{1}\,d\mu\right),\;\forall\varphi\in C;\:\text{see \ensuremath{\left(\ref{eq:CAN1}\right)}-\ensuremath{\left(\ref{eq:CAN2}\right)} above;}\\
\text{Lebesgue} &  &  & \int\varphi\,d\lambda=\frac{1}{2}\left(\int\varphi\left(\frac{x}{2}\right)d\lambda\left(x\right)+\int\varphi\left(\frac{x+1}{2}\right)d\lambda\left(x\right)\right),\;\forall\varphi\in C.
\end{alignat*}

\end{enumerate}
\end{rem}

\subsection{A symmetric pair of operators for $L^{2}(\mu)$ and $L^{2}(\nu)$}

Let $\mu$, $\nu$ be two non-atomic measures on $J=\left[0,1\right]$. 
\begin{lem}
\label{lem:gp}For $f\in L^{2}\left(\nu\right)\cap dom\left(T_{\mu}\right)$
and $g\in L^{2}\left(\mu\right)\cap dom\left(T_{\nu}\right)$, we
have
\[
\left(fg\right)\left(x\right)-\left(fg\right)\left(0\right)=\int_{0}^{x}\left(\nabla_{\mu}f\right)gd\mu+\int_{0}^{x}f\left(\nabla_{\nu}g\right)d\nu.
\]
\end{lem}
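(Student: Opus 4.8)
The plan is to follow the self-contained second proof of \lemref{T1} (the double-integral/Tonelli argument) and adapt it to the two-measure setting, the only genuinely new ingredient being the handling of the product measure $\mu\times\nu$. The single-measure Stieltjes integration-by-parts route also works verbatim --- substitute $df=\left(\nabla_{\mu}f\right)d\mu$ and $dg=\left(\nabla_{\nu}g\right)d\nu$ from \lemref{C1} into the identity $f\left(x\right)g\left(x\right)-f\left(0\right)g\left(0\right)=\int_{0}^{x}f\,dg+\int_{0}^{x}g\,df$ --- but I will organize the argument so as not to lean on the less familiar Stieltjes formula.

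First I would record the integral representations afforded by the domain hypotheses: since $f\in dom\left(T_{\mu}\right)$ and $g\in dom\left(T_{\nu}\right)$, equation (\ref{eq:Dmax}) gives $f\left(x\right)=f\left(0\right)+\int_{0}^{x}\left(\nabla_{\mu}f\right)d\mu$ with $\nabla_{\mu}f\in L^{2}\left(\mu\right)$, and $g\left(x\right)=g\left(0\right)+\int_{0}^{x}\left(\nabla_{\nu}g\right)d\nu$ with $\nabla_{\nu}g\in L^{2}\left(\nu\right)$. Expanding the product $f\left(x\right)g\left(x\right)$ as in the second proof of \lemref{T1}, every term is immediate except the product of the two integrals, which I rewrite as a double integral over the square $\left[0,x\right]^{2}$ against $\mu\times\nu$:
\[
\left(\int_{0}^{x}\nabla_{\mu}f\,d\mu\right)\left(\int_{0}^{x}\nabla_{\nu}g\,d\nu\right)=\int_{0}^{x}\int_{0}^{x}\left(\nabla_{\mu}f\right)\left(s\right)\left(\nabla_{\nu}g\right)\left(t\right)\,d\mu\left(s\right)d\nu\left(t\right).
\]
I would then split the square along the diagonal $\left\{ s=t\right\} $. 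The key new observation is that this diagonal is $\mu\times\nu$-null: its measure is $\int\nu\left(\left\{ s\right\} \right)d\mu\left(s\right)=0$ because $\nu$ is non-atomic. On the triangle $\left\{ s<t\right\} $ the inner $s$-integral produces $f\left(t\right)-f\left(0\right)$, and on $\left\{ s>t\right\} $ the inner $t$-integral produces $g\left(s\right)-g\left(0\right)$, exactly mirroring the split used for a single measure. Recombining these two contributions with the remaining cross terms $f\left(0\right)\int_{0}^{x}\nabla_{\nu}g\,d\nu$ and $g\left(0\right)\int_{0}^{x}\nabla_{\mu}f\,d\mu$ collapses the right-hand side to $\int_{0}^{x}\left(\nabla_{\mu}f\right)g\,d\mu+\int_{0}^{x}f\left(\nabla_{\nu}g\right)d\nu$, which is the claim.

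The step requiring the most care --- and the one that pins down the precise hypotheses --- is the justification of the Tonelli interchange and the finiteness of every integral appearing. Since $\mu$ and $\nu$ are finite on $\left[0,1\right]$ and $\nabla_{\mu}f\in L^{2}\left(\mu\right)$, $\nabla_{\nu}g\in L^{2}\left(\nu\right)$, Cauchy--Schwarz gives $\nabla_{\mu}f\in L^{1}\left(\mu\right)$ and $\nabla_{\nu}g\in L^{1}\left(\nu\right)$, so the factorized integrand lies in $L^{1}\left(\mu\times\nu\right)$ and Tonelli applies. The finiteness of the two target integrals $\int_{0}^{x}\left(\nabla_{\mu}f\right)g\,d\mu$ and $\int_{0}^{x}f\left(\nabla_{\nu}g\right)d\nu$ is exactly where the cross conditions $g\in L^{2}\left(\mu\right)$ and $f\in L^{2}\left(\nu\right)$ enter: a further application of Cauchy--Schwarz, against $\nabla_{\mu}f\in L^{2}\left(\mu\right)$ respectively $\nabla_{\nu}g\in L^{2}\left(\nu\right)$, renders both finite. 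Thus the two $L^{2}$-cross hypotheses in the statement are not incidental --- they are precisely what is needed for the right-hand side to be well defined.
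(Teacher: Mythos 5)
Your proof is correct, but it takes a different route from the paper's own proof of this lemma. The paper proves \lemref{gp} by the very Stieltjes computation you chose to set aside: it substitutes $df=\left(\nabla_{\mu}f\right)d\mu$ and $dg=\left(\nabla_{\nu}g\right)d\nu$ (\lemref{C1}) into the integration-by-parts formula for Stieltjes integrals, $\int_{0}^{x}g\,df=fg\big|_{0}^{x}-\int_{0}^{x}f\,dg$, and is done in three lines. What you do instead is transplant the paper's \emph{second} proof of \lemref{T1} (given there only for $\nu=\mu$) to the genuinely two-measure setting: expand $f\left(x\right)g\left(x\right)$ via the representations from (\ref{eq:Dmax}), write the cross term as a double integral against the product measure $\mu\times\nu$, split at the diagonal, and interchange orders by Tonelli. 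Your two additions to that template are exactly the right ones: the diagonal $\left\{ s=t\right\} $ is $\left(\mu\times\nu\right)$-null because the measures are non-atomic (a point the paper leaves implicit even in the $\nu=\mu$ case when it splits $\int_{0}^{1}=\int_{0}^{t}+\int_{t}^{1}$), and the Cauchy--Schwarz bookkeeping that licenses Tonelli and renders every integral finite. The trade-off: the paper's argument is shorter but leans on the Stieltjes integration-by-parts formula, which the paper itself concedes ``might perhaps not be widely known''; yours is self-contained measure theory and makes the role of each hypothesis explicit. One mild over-claim at the end: the cross conditions $f\in L^{2}\left(\nu\right)$ and $g\in L^{2}\left(\mu\right)$ are not strictly forced by finiteness of the right-hand side, since elements of $dom\left(T_{\mu}\right)$ and $dom\left(T_{\nu}\right)$ are continuous, hence bounded, on $\left[0,1\right]$, so these memberships are automatic once $\mu$ and $\nu$ are finite; this affects only your closing remark, not the validity of the proof.
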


\begin{proof}
A direct computation: 
\begin{eqnarray*}
\int_{0}^{x}\left(\nabla_{\mu}f\right)gd\mu & = & \int_{0}^{x}gdf\\
 & = & gf\big|_{0}^{x}-\int_{0}^{x}fdg\\
 & = & gf\big|_{0}^{x}-\int_{0}^{x}f\left(\nabla_{\nu}g\right)d\nu\\
 & \Downarrow\\
\left(fg\right)\left(x\right)-\left(fg\right)\left(0\right) & = & \int_{0}^{x}\left(\nabla_{\mu}f\right)gd\mu+\int_{0}^{x}f\left(\nabla_{\nu}g\right)d\nu\\
 & \Updownarrow\\
d\left(fg\right) & = & \left(\nabla_{\mu}f\right)gd\mu+f\left(\nabla_{\nu}g\right)d\nu.
\end{eqnarray*}
See also the proof of \lemref{T1}, and that of \thmref{D2}.
\end{proof}
\begin{cor}
Given a pair of non-atomic measures $\mu$ and $\nu$ as described;
with \lemref{gp} and the arguments in sect \ref{sec:DP}, and \ref{subsec:tl},
we then arrive at the following dual-pair realization for the associated
operators:
\begin{equation}
\xymatrix{\overline{dom\left(T_{\mu}\right)\cap L^{2}\left(\nu\right)}^{L^{2}\left(\nu\right)}\ar@/^{1.7pc}/[rr]^{T_{\mu}} &  & \overline{dom\left(T_{\nu}\right)\cap L^{2}\left(\mu\right)}^{L^{2}\left(\mu\right)}\ar@/^{1.5pc}/[ll]^{-T_{\nu}}}
\label{eq:munu}
\end{equation}
\begin{equation}
T_{\mu}\subset-T_{\nu}^{*},\quad-T_{\nu}\subset T_{\mu}^{*}.
\end{equation}
\end{cor}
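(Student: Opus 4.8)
The plan is to recognize the two displayed containments as a direct instance of \lemref{ST}, so that the actual work is to exhibit $\left(S,T\right)=\left(-T_{\nu},T_{\mu}\right)$ as a symmetric pair in the sense of \defref{sp}, with $\mathscr{H}_{1}=\overline{dom\left(T_{\mu}\right)\cap L^{2}\left(\nu\right)}^{L^{2}\left(\nu\right)}$ and $\mathscr{H}_{2}=\overline{dom\left(T_{\nu}\right)\cap L^{2}\left(\mu\right)}^{L^{2}\left(\mu\right)}$ as in the diagram (\ref{eq:munu}). First I would fix the dense subspaces $\mathscr{D}_{i}$ of \defref{sp}: following the minimal-operator device of \subsecref{tl} (cf. (\ref{eq:Dmin})), take $\mathscr{D}_{1}$ to be the compactly supported members of $dom\left(T_{\mu}\right)\cap L^{2}\left(\nu\right)$ and $\mathscr{D}_{2}$ the compactly supported members of $dom\left(T_{\nu}\right)\cap L^{2}\left(\mu\right)$. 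Since the elements of $\mathscr{D}_{1}$ are continuous, compact support in the open interval forces $f\left(0\right)=f\left(1\right)=0$, and likewise for $\mathscr{D}_{2}$.

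The central identity (\ref{eq:sp1}) is then read off from \lemref{gp}. Evaluating that lemma at $x=1$, after replacing $f$ by $\overline{f}$ and using $\nabla_{\mu}\overline{f}=\overline{\nabla_{\mu}f}$ (legitimate because $\mu$ is positive, so conjugation commutes with the Radon--Nikodym derivative), gives
\[
\left(\overline{f}g\right)\left(1\right)-\left(\overline{f}g\right)\left(0\right)=\int_{0}^{1}\overline{\left(\nabla_{\mu}f\right)}\,g\,d\mu+\int_{0}^{1}\overline{f}\left(\nabla_{\nu}g\right)d\nu.
\]
For $f\in\mathscr{D}_{1}$ and $g\in\mathscr{D}_{2}$ the left-hand side vanishes, so rearranging the right-hand side yields
\[
\left\langle T_{\mu}f,g\right\rangle _{L^{2}\left(\mu\right)}=\left\langle f,-T_{\nu}g\right\rangle _{L^{2}\left(\nu\right)},
\]
which is precisely (\ref{eq:sp1}) for the pair $\left(S,T\right)=\left(-T_{\nu},T_{\mu}\right)$ (using the convention $\left\langle a,b\right\rangle =\int\overline{a}b$ of the proof of \thmref{D2}).

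With the symmetric-pair identity established, \lemref{ST} applies verbatim and produces $T\subset S^{*}$ and $S\subset T^{*}$, which unwind via $\left(-T_{\nu}\right)^{*}=-T_{\nu}^{*}$ to the two asserted containments $T_{\mu}\subset-T_{\nu}^{*}$ and $-T_{\nu}\subset T_{\mu}^{*}$. As a bonus the same lemma yields closability of both $T_{\mu}$ and $T_{\nu}$ (as operators between these closures) together with the selfadjointness of $T_{\mu}^{*}\overline{T_{\mu}}$ and $T_{\nu}^{*}\overline{T_{\nu}}$.

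The one step I expect to be a genuine obstacle, rather than a transcription, is the bookkeeping around the boundary term $\left(\overline{f}g\right)\left(1\right)-\left(\overline{f}g\right)\left(0\right)$: one must choose $\mathscr{D}_{1},\mathscr{D}_{2}$ small enough that this term vanishes, yet still dense in the two $L^{2}$-closures appearing in (\ref{eq:munu}). Securing that density requires the standard truncation argument---multiplying by continuous cutoffs $\chi_{n}$ supported in $\left(0,1\right)$ and equal to $1$ away from the endpoints, chosen within the $\nabla_{\mu}$-domain so that the Leibniz rule (\ref{eq:gr1}) keeps $\chi_{n}f\in dom\left(T_{\mu}\right)$ while $\chi_{n}f\to f$ in $L^{2}\left(\nu\right)$, and symmetrically for $g$. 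Once this approximation is in place, controlling the $L^{2}\left(\mu\right)$- and $L^{2}\left(\nu\right)$-norms of $\nabla_{\mu}f$ and $\nabla_{\nu}g$ under the cutoffs, everything else is a direct translation of \lemref{gp} into the language of \defref{sp} and \lemref{ST}.
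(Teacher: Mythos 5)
Your proposal is correct and takes essentially the same route as the paper: the paper gives no separate proof of this corollary, deriving it exactly as you do, from the integration-by-parts identity of \lemref{gp} with the boundary term $\left(fg\right)\left(1\right)-\left(fg\right)\left(0\right)$ killed by restricting to the compactly supported (minimal) domains as in \subsecref{tl}, and then invoking \defref{sp} and \lemref{ST} to obtain $T_{\mu}\subset-T_{\nu}^{*}$ and $-T_{\nu}\subset T_{\mu}^{*}$. Your closing discussion of the density of those restricted domains in the two $L^{2}$-closures (with cutoffs chosen inside the $\nabla_{\mu}$-domain, which for singular $\mu$ must be built from the cumulative distribution $g_{\mu}$ rather than generic piecewise-linear functions) addresses a point the paper leaves entirely implicit.
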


\begin{rem}
Let $K_{F}=\frac{d}{d\mu}\frac{d}{dx}$ be as before. Note it has
a quadratic form representation as follows: 
\begin{equation}
\left\langle \varphi,K_{F}\psi\right\rangle _{L^{2}\left(\mu\right)}=-\left\langle \varphi',\psi'\right\rangle _{L^{2}\left(\lambda\right)}
\end{equation}
where $\varphi'=\frac{d\varphi}{dx}$, $\psi'=\frac{d\psi}{dx}$.
This follows from the dual pair $d/dx$, $d/d\mu$ in \thmref{D2}.
Details: 
\begin{equation}
\int\varphi\left(\frac{d}{d\mu}\frac{d}{dx}\psi\right)d\mu=-\int\frac{d}{dx}\varphi\frac{d}{dx}\psi d\lambda=-\int\varphi'\psi'd\lambda.
\end{equation}

Then we get the selfadjoint operator 
\begin{equation}
T_{\mu}T_{\mu}^{*}=V\left(T_{\mu}^{*}T_{\mu}\right)V^{*}
\end{equation}
where $V$ is a partial isometry, and $T_{\mu}T_{\mu}^{*}$ has dense
domain in $L^{2}\left(\mu\right)$. (See \thmref{vN} and \defref{sp})
So $T_{\mu}T_{\mu}^{*}$ is a selfadjoint operator extension for the
quadratic form $QF\left(K_{F}\right)$, where 
\begin{equation}
QF\left(K_{F}\right)\left(\varphi,\psi\right)=-\int\varphi'\psi'd\lambda.
\end{equation}

Similarly, in (\ref{eq:munu}) we get 
\begin{equation}
QF\left(K_{F}\right)\subset T_{\mu}T_{\mu}^{*}.
\end{equation}
\end{rem}

The notion of \textquotedblleft extension\textquotedblright{} of a
closable positive quadratic form $Q$ is made precise in, for example
\cite{MR1335452}. It is a quadratic form-version of the analogous
extension of Friedrichs (see e.g., \cite{MR1009163}). If $K$ is
such a selfadjoint extension operator for $Q$, then the requirement
is that the domain of $Q$ be contained in the domain of the square
root of $K$.
\begin{example}[see also \figref{devil},  (\ref{eq:CAN2}), and \exaref{DE1} (\ref{enu:de2})]
Let $\mu=\mu_{3}$, the middle $1/3$ Cantor measure, 
\begin{equation}
\mu_{3}=\frac{1}{2}\left(\mu_{3}\circ\sigma_{0}^{-1}+\mu_{3}\circ\sigma_{1}^{-1}\right),
\end{equation}
supported on the Cantor set 
\begin{equation}
C_{3}=\left[0,1\right]\backslash\bigcup\left\{ \text{middle intervals}\right\} ,
\end{equation}
so that $\lambda\left(C_{3}\right)=0$. Let $g_{3}\left(x\right)=\mu_{3}\left(\left[0,x\right]\right)$.
Set $K_{F}^{\left(3\right)}=\frac{d}{d\mu_{3}}\frac{d}{dx}$. Then
\begin{equation}
K_{F}^{\left(3\right)}\left(\psi\circ g_{3}\right)=\frac{d}{d\mu}\left(\psi'\circ g\right)\left(\frac{d}{dx}g_{3}\right)=0
\end{equation}
since $\frac{d}{dx}g_{3}=0$ in the sense of distribution, i.e, $g'_{3}=0$
a.e. $\lambda$. 
\end{example}

\subsection{The $L^{2}\left(\mu\right)$-boundary value problem}

We now turn to a detailed harmonic analysis of the skew-adjoint extension
operators introduced in \thmref{Tadj}. Recall that, when $\alpha$
is fixed (on the complex circle), then the corresponding skew-adjoint
extension operator generates a \emph{unitary one-parameter group}
(depending on $\alpha$) of operators $U(t)$ acting in $L^{2}(\mu)$.
The harmonic analysis of this unitary one-parameter group was presented
in detail above in \lemref{tc}. The following result offers a complete
spectral picture.
\begin{lem}
\label{lem:T9}Set 
\begin{equation}
v_{x}\left(y\right):=\mu\left(\left[0,y\wedge x\right]\right).\label{eq:d1}
\end{equation}
Then we have 
\begin{equation}
T_{\mu}v_{x}=\frac{dv_{x}}{d\mu}=\chi_{\left[0,x\right]}.\label{eq:d2}
\end{equation}

Moreover, for any $F\in C^{1}$, we get
\begin{equation}
T_{\mu}\left(F\left(v_{x}\right)\right)=F'\left(v_{x}\right)\chi_{\left[0,x\right]}.\label{eq:d3}
\end{equation}
In particular, 
\begin{equation}
T_{\mu}\left(e^{iv_{x}}\right)=ie^{iv_{x}}\chi_{\left[0,x\right]}.\label{eq:d4}
\end{equation}
\end{lem}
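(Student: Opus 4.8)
The plan is to handle the three formulas in order, in each case reducing the claim to the defining integral characterization of $\nabla_{\mu}$ in (\ref{eq:Dmax}) together with the Stieltjes-measure dictionary of \lemref{C1}; non-atomicity of $\mu$ enters only through the continuity of the cumulative distribution.

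First, for (\ref{eq:d2}) I would verify directly that $v_{x}\in\mathscr{D}_{1}$ with the asserted $\mu$-derivative. Since $\mu$ is non-atomic, $\mu(\{0\})=0$, so $v_{x}(0)=\mu([0,0])=0$; and for every $y$ one has
\[
v_{x}(0)+\int_{0}^{y}\chi_{[0,x]}\,d\mu=\mu\big([0,y]\cap[0,x]\big)=\mu\big([0,y\wedge x]\big)=v_{x}(y).
\]
As $\chi_{[0,x]}$ is bounded with $\mu([0,x])<\infty$, it lies in $L^{2}(\mu)$, so $v_{x}\in\mathscr{D}_{1}$, and comparison with the form (\ref{eq:Dmax}) gives exactly $\nabla_{\mu}v_{x}=\chi_{[0,x]}$. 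Equivalently, in the language of \lemref{C1}: $v_{x}$ agrees with $g$ on $[0,x]$ and is constant on $[x,1]$, so its Stieltjes measure is $dv_{x}=\mu|_{[0,x]}=\chi_{[0,x]}\,d\mu$, and (\ref{eq:ca3}) yields the same conclusion.

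For (\ref{eq:d3}) the clean route is the chain rule at the level of Stieltjes measures. Because $\mu$ is non-atomic, $v_{x}$ is continuous; being monotone it is of bounded variation, so for $F\in C^{1}$ the composite $F(v_{x})$ is again of bounded variation and obeys $d\big(F(v_{x})\big)=F'(v_{x})\,dv_{x}$ (valid precisely because $v_{x}$ has no jumps). Substituting $dv_{x}=\chi_{[0,x]}\,d\mu$ from the previous step and applying \lemref{C1} translates this into $\nabla_{\mu}\big(F(v_{x})\big)=F'(v_{x})\chi_{[0,x]}$, which is (\ref{eq:d3}); note $F'(v_{x})\chi_{[0,x]}$ is bounded, hence in $L^{2}(\mu)$, since $F'$ is continuous on the compact range $v_{x}([0,1])=[0,g(x)]$. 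As a self-contained fallback that stays inside the machinery already developed, I would instead establish the monomial case $\nabla_{\mu}(v_{x}^{n})=n\,v_{x}^{n-1}\chi_{[0,x]}$ by induction from the Leibniz rule of \lemref{T1}, extend to polynomials by linearity, and then approximate: choosing polynomials $P_{k}=F(0)+\int_{0}^{\,\cdot}Q_{k}$ with $Q_{k}\to F'$ uniformly on $[0,g(x)]$ (Weierstrass) gives $P_{k}\to F$ and $P_{k}'\to F'$ uniformly there, so the defining integral identity $P_{k}(v_{x})(y)=P_{k}(0)+\int_{0}^{y}P_{k}'(v_{x})\chi_{[0,x]}\,d\mu$ passes to the limit by uniform/dominated convergence, yielding (\ref{eq:d3}) without any appeal to abstract closedness.

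Finally, (\ref{eq:d4}) is the special case of (\ref{eq:d3}) with the complex $C^{1}$ function $F(u)=e^{iu}$, $F'(u)=ie^{iu}$, giving $T_{\mu}(e^{iv_{x}})=ie^{iv_{x}}\chi_{[0,x]}$ (applying the real case to the real and imaginary parts if one prefers). The one point genuinely requiring care---the main obstacle---is justifying the chain rule $d(F(v_{x}))=F'(v_{x})\,dv_{x}$ for Stieltjes measures, i.e.\ that the $\mu$-derivative obeys the chain rule; this rests squarely on the continuity of $v_{x}$, which is exactly where the non-atomicity of $\mu$ is used. The polynomial-plus-Leibniz approximation is the fallback that circumvents invoking the BV chain rule as an external fact.
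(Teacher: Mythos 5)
Your proof is correct and follows essentially the same route as the paper's: both verify (\ref{eq:d2}) directly from the defining integral identity $v_{x}(y)=\int_{0}^{y}\chi_{[0,x]}\,d\mu$, then obtain (\ref{eq:d3}) from the Stieltjes chain rule $d\left(F\left(v_{x}\right)\right)=F'\left(v_{x}\right)dv_{x}$ combined with $dv_{x}=\chi_{[0,x]}\,d\mu$, and read off (\ref{eq:d4}) as the special case $F(u)=e^{iu}$. The only difference is that you make explicit what the paper leaves implicit---that non-atomicity gives $v_{x}(0)=0$ and the continuity of $v_{x}$ needed for the BV chain rule, that $F'(v_{x})\chi_{[0,x]}\in L^{2}(\mu)$, plus a self-contained Leibniz/Weierstrass fallback---all of which is sound.
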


\begin{proof}
Note that $v_{x}\left(y\right)=\int_{0}^{y}\chi_{\left[0,x\right]}\left(s\right)d\mu\left(s\right)=\mu\left(\left[0,y\wedge x\right]\right)$,
which is (\ref{eq:d2}). 

Now, if $F\in C^{1}$ then 
\[
d\left(F\left(v_{x}\right)\right)=F'\left(v_{x}\right)dv_{x}.
\]
That is, 
\begin{align*}
F\left(v_{x}\left(y\right)\right)-F\left(v_{x}\left(0\right)\right) & =\int_{0}^{y}F'\left(v_{x}\right)dv_{x}\\
 & =\int_{0}^{y}F'\left(v_{x}\right)\chi_{\left[0,x\right]}d\mu,
\end{align*}
so that (\ref{eq:d3}) holds, and (\ref{eq:d4}) follows from this. 

We now turn to the detailed \emph{spectral expansion} for the indexed
system of skew-adjoint operators $T_{\mu,\theta}$ discussed in \thmref{Tadj}.
\end{proof}
\begin{thm}
Let $T_{\mu,\theta}$ ($\alpha=e^{i\theta}$) be as above. In particular,
elements in $dom\left(T_{\mu,\theta}\right)$ satisfy the boundary
condition 
\begin{equation}
f\left(1\right)=e^{i\theta}f\left(0\right),\quad\theta\in\mathbb{R}.\label{eq:tb1-1}
\end{equation}
Then $T_{\mu,\theta}$ has the following spectral representation:
\begin{equation}
-iT_{\mu,\theta}=\sum_{n\in\mathbb{Z}}\lambda_{n}\left|\varphi_{n}\left\rangle \right\langle \varphi_{n}\right|
\end{equation}
where 
\begin{equation}
\lambda_{n}=\frac{\theta+2n\pi}{\mu\left(J\right)},\quad\varphi_{n}\left(x\right)=\frac{1}{\sqrt{\mu\left(J\right)}}e^{i\lambda_{n}\mu\left(\left[0,x\right]\right)},\quad n\in\mathbb{Z},\label{eq:C26}
\end{equation}
and $\left\{ \varphi_{n}\right\} $ is an ONB in $L^{2}\left(\mu\right)$.
And the associated unitary one-parameter group $U\left(t\right)=e^{tT_{\mu,\theta}}$
is given by 
\begin{equation}
e^{tT_{\mu,\theta}}=\sum_{n\in\mathbb{Z}}e^{it\lambda_{n}}\left|\varphi_{n}\left\rangle \right\langle \varphi_{n}\right|.
\end{equation}
 
\end{thm}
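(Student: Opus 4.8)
The plan is to exhibit $\{\varphi_n\}_{n\in\mathbb{Z}}$ as an orthonormal basis of $L^2(\mu)$ consisting of eigenvectors of the skew-adjoint operator $T_{\mu,\theta}$, and then to read off both the spectral representation and the one-parameter group from the spectral theorem. Since $-iT_{\mu,\theta}$ is selfadjoint (equivalently, $T_{\mu,\theta}$ is skew-adjoint by \thmref{Tadj}), once we know each $\varphi_n$ lies in $dom(T_{\mu,\theta})$, satisfies $T_{\mu,\theta}\varphi_n=i\lambda_n\varphi_n$, and that the family is a complete orthonormal system, the two displayed identities follow immediately.

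First I would verify the eigenvalue equation. Writing $g(x)=\mu([0,x])$, observe that $\varphi_n=F_n\circ g$ with $F_n(t)=\mu(J)^{-1/2}e^{i\lambda_n t}\in C^1$, and that $g=v_1$ in the notation of \lemref{T9} (since $v_1(y)=\mu([0,y\wedge 1])=\mu([0,y])$), with $\chi_{[0,1]}\equiv 1$ on $J$. Hence (\ref{eq:d3}) of \lemref{T9}, applied with $x=1$, gives $T_\mu\varphi_n=F_n'(g)=i\lambda_n\varphi_n$; in particular $\varphi_n\in\mathscr{D}_1$ because $\nabla_\mu\varphi_n=i\lambda_n\varphi_n$ is bounded, hence in $L^2(\mu)$. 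Next I would check the boundary condition placing $\varphi_n$ in $dom(T_{\mu,\theta})$: non-atomicity gives $g(0)=\mu(\{0\})=0$ and $g(1)=\mu(J)$, so $\lambda_n\,g(1)=\theta+2n\pi$ and therefore $\varphi_n(1)=\mu(J)^{-1/2}e^{i(\theta+2n\pi)}=e^{i\theta}\varphi_n(0)$, exactly the condition (\ref{eq:tb1-1}). Thus $\varphi_n$ is a genuine eigenvector of $T_{\mu,\theta}$ with eigenvalue $i\lambda_n$.

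Orthonormality and completeness are handled by transporting the problem to $L^2$ of an interval under the pushforward $\mu\circ g^{-1}=\lambda$ (Lebesgue measure on $[0,\mu(J)]$); equivalently one uses the isometry $W_\mu$ of \lemref{tc}. The substitution $y=g(x)$ turns $\langle\varphi_m,\varphi_n\rangle_{L^2(\mu)}$ into $\mu(J)^{-1}\int_0^{\mu(J)}e^{i(\lambda_n-\lambda_m)y}\,dy$; since $(\lambda_n-\lambda_m)\mu(J)=2(n-m)\pi$, this equals $\delta_{mn}$, giving orthonormality. For completeness, the same substitution identifies $\varphi_n$ with the exponential $\mu(J)^{-1/2}e^{i\lambda_n y}$ on $[0,\mu(J)]$; these are the classical Fourier exponentials on an interval of length $\mu(J)$ (shifted by the phase $\theta/\mu(J)$), which form an ONB of $L^2([0,\mu(J)],dy)$, so their preimages $\{\varphi_n\}$ form an ONB of $L^2(\mu)$.

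With an ONB of eigenvectors in hand, the spectral theorem for the selfadjoint operator $-iT_{\mu,\theta}$ yields $-iT_{\mu,\theta}=\sum_n\lambda_n|\varphi_n\rangle\langle\varphi_n|$, and exponentiating the diagonal action $T_{\mu,\theta}\varphi_n=i\lambda_n\varphi_n$ gives $e^{tT_{\mu,\theta}}\varphi_n=e^{it\lambda_n}\varphi_n$, i.e.\ the stated formula for $U(t)$. I expect the only real subtlety to be the completeness claim: one must make the change-of-variables isometry $L^2(\mu)\cong L^2([0,\mu(J)],dy)$ precise (using that $g$ is non-decreasing, continuous by non-atomicity, and pushes $\mu$ forward to Lebesgue measure, together with a.e.-defined inverse branches), after which completeness reduces to the standard Fourier basis on an interval. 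The eigenvalue and boundary-condition verifications are routine given \lemref{T9}.
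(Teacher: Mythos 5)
Your proposal is correct and follows essentially the same route as the paper: the paper's proof likewise rests on \lemref{T9}, verifying that $f(x)=e^{i\lambda\mu\left(\left[0,x\right]\right)}$ solves $f(x)-f(0)=i\lambda\int_{0}^{x}f\,d\mu$ via the Stieltjes substitution $\int_{0}^{x}e^{i\lambda g(s)}\,dg(s)=\tfrac{1}{i\lambda}\left(e^{i\lambda g(x)}-1\right)$, which is equivalent to your use of (\ref{eq:d3}). Your write-up is in fact more complete than the paper's own proof, which stops at the eigenvalue verification and leaves the boundary-condition check, orthonormality, and completeness implicit; you carry these out via the pushforward $\mu\circ g^{-1}=\lambda$ and the unitary $W_{\mu}$ of \lemref{tc}, exactly as the surrounding text intends.
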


\begin{proof}
With \lemref{T9} we justify the eigenvalue/eigenfunction assertions
in (\ref{eq:C26}) in the Theorem. Note that 
\begin{equation}
T_{\mu,\theta}f=i\lambda f\Longleftrightarrow f\left(x\right)-f\left(0\right)=i\lambda\int_{0}^{x}fd\mu.\label{eq:tem1}
\end{equation}
It suffices to verify that  $f\left(x\right)=e^{i\lambda\mu\left(\left[0,x\right]\right)}$
satisfies (\ref{eq:tem1}). Indeed, 
\[
\int_{0}^{x}e^{i\lambda\mu\left(\left[0,s\right]\right)}d\mu\left(s\right)=\int_{0}^{x}e^{i\lambda g\left(s\right)}dg\left(s\right)=\frac{1}{i\lambda}\left(e^{i\lambda g\left(x\right)}-1\right),
\]
where $g\left(x\right)=\mu\left(\left[0,x\right]\right)$ as before. 
\end{proof}
\begin{lem}
The adjoint operator of $T_{\mu,0}$ is given by $T_{\mu,0}^{*}=-\nabla_{\mu}$,
defined on $\mathscr{D}_{1}$. 
\end{lem}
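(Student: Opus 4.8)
The plan is to establish the two graph inclusions separately: first that $-\nabla_{\mu}$ restricted to $\mathscr{D}_{1}$ is contained in $T_{\mu,0}^{*}$, and then that $dom\left(T_{\mu,0}^{*}\right)$ contains nothing more than $\mathscr{D}_{1}$. Throughout I work with the Hilbert-space adjoint of \defref{B1}, applied to $\mathscr{H}_{1}=\mathscr{H}_{2}=L^{2}\left(\mu\right)$ and $T=T_{\mu,0}$. Concretely, a vector $g\in L^{2}\left(\mu\right)$ lies in $dom\left(T_{\mu,0}^{*}\right)$ exactly when the functional $f\mapsto\left\langle g,T_{\mu,0}f\right\rangle _{L^{2}\left(\mu\right)}$ is $L^{2}\left(\mu\right)$-bounded on $\mathscr{D}_{0}$, and then $T_{\mu,0}^{*}g$ is its Riesz representative.

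For the inclusion $-\nabla_{\mu}\big|_{\mathscr{D}_{1}}\subset T_{\mu,0}^{*}$ I would start from the boundary identity recorded in the proof of \thmref{Tadj} (equivalently (\ref{eq:gr1}) of \lemref{T1}),
\begin{equation}
\left\langle \nabla_{\mu}f,g\right\rangle _{L^{2}\left(\mu\right)}+\left\langle f,\nabla_{\mu}g\right\rangle _{L^{2}\left(\mu\right)}=\left(\overline{f}g\right)\left(1\right)-\left(\overline{f}g\right)\left(0\right),
\end{equation}
valid for all $f,g\in\mathscr{D}_{1}$. Fixing $g\in\mathscr{D}_{1}$ and restricting $f$ to $\mathscr{D}_{0}=C_{c}\left(J\right)\cap\mathscr{D}_{1}$, the endpoint terms vanish since such $f$ satisfy $f\left(0\right)=f\left(1\right)=0$; taking complex conjugates then gives $\left\langle g,T_{\mu,0}f\right\rangle _{L^{2}\left(\mu\right)}=\left\langle -\nabla_{\mu}g,f\right\rangle _{L^{2}\left(\mu\right)}$ for every $f\in\mathscr{D}_{0}$. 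As $-\nabla_{\mu}g\in L^{2}\left(\mu\right)$, the functional is bounded, so $g\in dom\left(T_{\mu,0}^{*}\right)$ and $T_{\mu,0}^{*}g=-\nabla_{\mu}g$. This yields both the claimed formula and the inclusion $\mathscr{D}_{1}\subseteq dom\left(T_{\mu,0}^{*}\right)$.

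The substantive step is the reverse inclusion $dom\left(T_{\mu,0}^{*}\right)\subseteq\mathscr{D}_{1}$. Given $g\in dom\left(T_{\mu,0}^{*}\right)$ with $h:=T_{\mu,0}^{*}g\in L^{2}\left(\mu\right)$, I would compare $g$ against the explicit primitive $\phi\left(x\right):=-\int_{0}^{x}h\,d\mu$, which lies in $\mathscr{D}_{1}$ with $\nabla_{\mu}\phi=-h$ by (\ref{eq:Dmax})--(\ref{eq:C2}). By the forward direction $\phi\in dom\left(T_{\mu,0}^{*}\right)$ and $T_{\mu,0}^{*}\phi=h$, so $k:=g-\phi$ satisfies $\left\langle k,\nabla_{\mu}f\right\rangle _{L^{2}\left(\mu\right)}=0$ for all $f\in\mathscr{D}_{0}$. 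The proof then reduces to identifying the range $\nabla_{\mu}\left(\mathscr{D}_{0}\right)$: the condition $f\left(0\right)=f\left(1\right)=0$ forces $\int_{0}^{1}\nabla_{\mu}f\,d\mu=0$, while conversely every $u\in L^{2}\left(\mu\right)$ with $\int_{0}^{1}u\,d\mu=0$ is realized as $u=\nabla_{\mu}f$ for $f\left(x\right)=\int_{0}^{x}u\,d\mu$. Hence the closure of the range equals the orthogonal complement of the constant function $\mathbb{1}$ in $L^{2}\left(\mu\right)$, so $k\perp\left\{ \mathbb{1}\right\} ^{\perp}$, i.e.\ $k$ is constant $\mu$-a.e. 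Consequently $g=\phi+\text{const}$ has the integral representation of (\ref{eq:Dmax}) with $\mu$-derivative $-h\in L^{2}\left(\mu\right)$, so $g\in\mathscr{D}_{1}$ and $T_{\mu,0}^{*}g=-\nabla_{\mu}g$.

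I expect the only delicate point to be the surjectivity-up-to-closure claim $\overline{\nabla_{\mu}\left(\mathscr{D}_{0}\right)}=\left\{ \mathbb{1}\right\} ^{\perp}$, where one must reconcile the precise reading of $C_{c}\left(J\right)$ (functions vanishing at, or near, the endpoints) with the requirement that the primitive remain in $\mathscr{D}_{1}$. It is precisely the non-atomicity of $\mu$ that guarantees the primitives $x\mapsto\int_{0}^{x}u\,d\mu$ are continuous and that zero-$\mu$-mean functions supported away from the endpoints are dense in $\left\{ \mathbb{1}\right\} ^{\perp}$. Once this density is secured, the orthogonality argument closes the reverse inclusion, and the two inclusions together give $T_{\mu,0}^{*}=-\nabla_{\mu}$ on $\mathscr{D}_{1}$.
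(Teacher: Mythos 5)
Your proof is correct and follows the same two-step strategy as the paper: the forward inclusion $-\nabla_{\mu}\big|_{\mathscr{D}_{1}}\subset T_{\mu,0}^{*}$ via the integration-by-parts identity (\ref{eq:gr1}) with vanishing endpoint terms, and the reverse inclusion by comparing $g$ against a $\mu$-primitive of $h=T_{\mu,0}^{*}g$. Where you genuinely improve on the paper is at the decisive last step. The paper sets $H\left(x\right)=H\left(0\right)+\int_{0}^{x}h\,d\mu$, derives $\int_{0}^{1}\left(T_{\mu,0}f\right)\left(g+H\right)d\mu=0$ for all $f\in\mathscr{D}_{0}$, and concludes outright that $g=-H$ in $L^{2}\left(\mu\right)$. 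That inference tacitly presupposes that $ran\left(T_{\mu,0}\right)$ is dense in $L^{2}\left(\mu\right)$, which is false: every element of the range has zero $\mu$-mean (the conditions $f\left(0\right)=f\left(1\right)=0$ force $\int_{0}^{1}\nabla_{\mu}f\,d\mu=0$), so the constants lie in the orthogonal complement of the range. Your argument supplies exactly the missing ingredient: you identify the closure of $ran\left(T_{\mu,0}\right)$ as the orthogonal complement of the constants --- the density half being where non-atomicity enters, via truncation to $\left[\delta,1-\delta\right]$ together with a small mean-correction so that the primitive is continuous and compactly supported in the open interval --- and you conclude only that $k=g-\phi$ is constant $\mu$-a.e., then absorb that constant into the representation (\ref{eq:Dmax}). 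This is the correct reasoning; the paper's conclusion $g=-H$ holds only modulo an additive constant (harmless, since $H\left(0\right)$ is a free parameter, but the step as printed skips the range identification entirely). The one point you flagged as delicate, the density claim, is indeed the only place requiring care, and your sketch of it is the standard and correct way to close it; making it fully rigorous would amount to a three-line approximation argument. So: same architecture as the paper, but your version repairs a real gap in the published proof.
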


\begin{proof}
For any $f\in\mathscr{D}_{0}$ and $g\in\mathscr{D}_{1}$, one has
\[
\int_{0}^{1}\left(T_{\mu,0}f\right)gd\mu+\int_{0}^{1}f\nabla_{\mu}gd\mu=0
\]
and so $T_{\mu,0}^{*}\subset-\nabla_{\mu}\big|_{\mathscr{D}_{1}}$. 

Conversely, for all $g\in dom\left(T_{\mu,0}^{*}\right)\subset L^{2}\left(\mu\right)$,
let $h=T_{\mu,0}^{*}g$ then 
\[
\int_{0}^{1}\left(T_{\mu,0}f\right)gd\mu=\int_{0}^{1}fhd\mu,\quad\forall f\in\mathscr{D}_{0}.
\]
Set $H\left(x\right)=H\left(0\right)+\int_{0}^{x}hd\mu\in\mathscr{D}_{1}$,
then 
\[
\int_{0}^{1}\left(T_{\mu,0}f\right)gd\mu=-\int_{0}^{1}f\nabla_{\mu}Hd\mu=-\int_{0}^{1}\left(T_{\mu,0}f\right)Hd\mu,\quad\forall f\in\mathscr{D}_{0}.
\]
It follows that $g=-H$ in $L^{2}\left(\mu\right)$. 
\end{proof}
Even though our present focus is on the case when $\mu$ is assumed
singular w.r.t. Lebesgue measure, $\lambda$, for the sake of illustration,
the next two results, \lemref{abs} and \corref{C13}, cover the other
extreme, i.e., when $\mu\ll\lambda$ holds. 
\begin{lem}
\label{lem:abs}Assume $\mu\ll dx$, where $dx=$ the Lebesgue measure
on $\left[0,1\right]$, and let $d\mu/dx=M\left(x\right)$. Then 
\[
\nabla_{\mu}f\left(x\right)=\left(M^{-1}f'\right)\left(x\right).
\]
\end{lem}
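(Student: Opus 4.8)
The plan is to reduce the statement to the chain rule for Radon--Nikodym derivatives, using the characterization of $\nabla_{\mu}f$ recorded in \lemref{C1}. Recall from (\ref{eq:ca5}) that $f^{\left(\mu\right)}=\nabla_{\mu}f$ is the (essentially unique) element of $L^{2}\left(\mu\right)$ for which
\[
f\left(y\right)-f\left(x\right)=\int_{x}^{y}f^{\left(\mu\right)}\,d\mu\qquad\text{for all }x<y,
\]
or, equivalently, for which $df=f^{\left(\mu\right)}\,d\mu$ as Stieltjes measures, see (\ref{eq:ca3}).

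First I would substitute the hypothesis $d\mu=M\,dx$ into this defining relation. For $f\in\mathscr{D}_{1}$, the representation (\ref{eq:Dmax}) then reads
\[
f\left(x\right)=f\left(0\right)+\int_{0}^{x}f^{\left(\mu\right)}\left(s\right)\,d\mu\left(s\right)=f\left(0\right)+\int_{0}^{x}f^{\left(\mu\right)}\left(s\right)M\left(s\right)\,ds,
\]
which exhibits $f$ as an absolutely continuous function with respect to Lebesgue measure whose a.e.\ derivative satisfies $f'\left(x\right)=f^{\left(\mu\right)}\left(x\right)M\left(x\right)$ for $\lambda$-a.e.\ $x$. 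At the level of Stieltjes measures this is simply $df=f'\,dx$ together with $d\mu=M\,dx$, so the asserted identity $\nabla_{\mu}f=M^{-1}f'$ is precisely the chain rule $\tfrac{df}{d\mu}=\tfrac{df}{dx}\cdot\tfrac{dx}{d\mu}=f'\cdot M^{-1}$.

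The one point requiring care, and the (mild) main obstacle, is the meaning of $M^{-1}$ on the set where $M$ vanishes, and the passage between $\lambda$-a.e.\ and $\mu$-a.e.\ identities. Since $d\mu=M\,dx$, we have $\mu\left(\left\{ M=0\right\} \right)=\int_{\left\{ M=0\right\} }M\,dx=0$, so $M>0$ holds $\mu$-almost everywhere and $M^{-1}$ is well defined as an element determined $\mu$-a.e. Because $\mu\ll\lambda$, the $\lambda$-a.e.\ identity $f'=f^{\left(\mu\right)}M$ also holds $\mu$-a.e., and dividing by $M$ there yields $f^{\left(\mu\right)}=M^{-1}f'$ as elements of $L^{2}\left(\mu\right)$; membership in $L^{2}\left(\mu\right)$ is automatic since $f^{\left(\mu\right)}\in L^{2}\left(\mu\right)$ by the definition of $\mathscr{D}_{1}$ in (\ref{eq:Dmax}). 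This completes the argument.
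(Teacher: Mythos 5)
Your proof is correct and takes essentially the same route as the paper's: both substitute $d\mu=M\,dx$ into the defining integral representation of $\mathscr{D}_{1}$ (eq.\ (\ref{eq:Dmax})) and identify $\nabla_{\mu}f=M^{-1}f'$ by uniqueness of the Radon--Nikodym derivative, with the paper simply running the computation in the opposite direction, factoring $f'=\left(f'M^{-1}\right)M$ inside the Lebesgue integral. Your additional care about the $\mu$-null set $\left\{ M=0\right\} $ and the passage between $\lambda$-a.e.\ and $\mu$-a.e.\ identities fills in details the paper leaves implicit, but it does not change the underlying argument.
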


\begin{proof}
Indeed, if $f\in\mathscr{D}_{1}$ then 
\begin{align*}
f\left(x\right)-f\left(0\right) & =\int_{0}^{x}f'\left(x\right)dx\\
 & =\int_{0}^{x}\left(f'M^{-1}\right)\left(x\right)M\left(x\right)dx=\int_{0}^{x}\nabla_{\mu}f\left(x\right)d\mu\left(x\right).
\end{align*}
\end{proof}
\begin{cor}
\label{cor:C13}Let $M\left(x\right)=d\mu/dx$ be as above. The deficiency
subspaces of $T_{\mu,0}$ are determined as follows:
\begin{align*}
\mathscr{D}_{\pm}\left(T_{\mu,0}\right) & :=\left\{ g:\left(g'M^{-1}\right)\left(x\right)=\pm g\right\} \\
 & =span\left\{ \exp\left(\pm\int M\left(x\right)dx\right)\right\} .
\end{align*}
In particular, this implies that $T_{\mu,0}$ has deficiency indices
$\left(1,1\right)$. 
\end{cor}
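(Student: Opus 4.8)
The plan is to realize the two deficiency subspaces as the $\pm1$-eigenspaces of the adjoint $T_{\mu,0}^{*}$, to convert the resulting eigenvalue equation into an elementary first-order ODE by means of \lemref{abs}, and then to solve it and check $L^{2}\left(\mu\right)$-integrability of the solutions. For the skew-symmetric operator $T_{\mu,0}$ the deficiency subspaces are, by definition, $\ker\left(T_{\mu,0}^{*}\pm I\right)$ (the skew-adjoint analogue of the usual $\ker\left(S^{*}\mp iI\right)$ attached to a symmetric $S$). By the lemma immediately preceding the corollary, $T_{\mu,0}^{*}=-\nabla_{\mu}$ on the domain $\mathscr{D}_{1}$; hence $\left(T_{\mu,0}^{*}\pm I\right)g=0$ is equivalent to $\nabla_{\mu}g=\pm g$. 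Invoking \lemref{abs}, under the standing hypothesis $\mu\ll dx$ with $d\mu/dx=M$ one has $\nabla_{\mu}g=g'M^{-1}$, so the eigenvalue equation becomes exactly the stated condition $g'M^{-1}=\pm g$, i.e.\ the separable linear ODE $g'=\pm Mg$ on $\left(0,1\right)$.

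Solving this ODE is then routine: separation of variables gives
\[
g\left(x\right)=g\left(0\right)\exp\left(\pm\int_{0}^{x}M\left(s\right)\,ds\right),
\]
so each solution space is at most one-dimensional, spanned by the single function $\exp\left(\pm\int M\,dx\right)$, which is the asserted description of $\mathscr{D}_{\pm}\left(T_{\mu,0}\right)$. Because the minimal domain $\mathscr{D}_{0}=C_{c}\left(J\right)\cap\mathscr{D}_{1}$ carries no endpoint data, the adjoint domain $\mathscr{D}_{1}$ imposes no boundary constraint on $g$, so the ODE solution space is not cut down further and the spanning exponential is genuinely admissible.

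The only step needing real care is confirming that these exponentials lie in $\mathscr{D}_{1}\subset L^{2}\left(\mu\right)$, so that each deficiency space is exactly one-dimensional rather than zero. Here I would use that $\mu$ is a finite non-atomic measure on $\left[0,1\right]$: since $\int_{0}^{1}M\,dx=\mu\left(\left[0,1\right]\right)<\infty$, the cumulative integral $x\mapsto\int_{0}^{x}M\,ds$ is bounded on $\left[0,1\right]$, whence $g$ is bounded and in particular lies in $L^{2}\left(\mu\right)$; moreover its $\mu$-derivative $\nabla_{\mu}g=\pm g$ is in $L^{2}\left(\mu\right)$, which is precisely the integral representation defining membership in $\mathscr{D}_{1}$. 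Therefore $\dim\mathscr{D}_{+}=\dim\mathscr{D}_{-}=1$, and $T_{\mu,0}$ has deficiency indices $\left(1,1\right)$, in agreement with \thmref{Tadj}. The main (very mild) obstacle is thus not the ODE itself but the verification that the exponential solutions remain square-integrable against the possibly singular weight $M$; the finiteness of $\mu$ is exactly what guarantees this.
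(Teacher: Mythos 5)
Your proposal is correct and follows essentially the same route as the paper's proof: identify the deficiency spaces with the kernels of $T_{\mu,0}^{*}\mp I$, use \lemref{abs} to rewrite $\nabla_{\mu}g=\pm g$ as the ODE $g'=\pm Mg$, and solve it (the paper via $\left(\ln g\right)'=\pm M$, you via separation of variables). The one difference is that you also verify that the exponential solutions actually lie in $\mathscr{D}_{1}\subset L^{2}\left(\mu\right)$ using finiteness of $\mu$ --- a step the paper compresses into ``the conclusion follows,'' and a worthwhile addition, since it is exactly what rules out deficiency indices $\left(0,0\right)$ and yields $\left(1,1\right)$.
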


\begin{proof}
One checks that 
\begin{align*}
\nabla_{\mu}g & =\pm g\\
 & \Updownarrow\\
g'M^{-1} & =\pm g\\
 & \Updownarrow\\
\left(\ln g\right)' & =\pm M
\end{align*}
and the conclusion follows.
\end{proof}
\begin{defn}
Set 
\[
\Delta_{\mu}=\frac{d}{d\mu}\frac{d}{dx}=\nabla_{\mu}\frac{d}{dx}
\]
defined on 
\begin{align*}
dom\left(\Delta_{\mu}\right) & :=\left\{ f\mid f'\in\mathscr{D}_{1}\right\} \\
 & =\left\{ c+\int_{0}^{t}\left(f\left(0\right)+\int_{0}^{x}\nabla_{\mu}fd\mu\right)dx\mid\nabla_{\mu}f\in L^{2}\left(\mu\right)\right\} .
\end{align*}
 Set 
\[
\Delta_{\mu,0}=\Delta_{\mu}\big|_{C_{c}\cap dom\left(\Delta_{\mu}\right)}.
\]
\end{defn}

Then $\Delta_{\mu,0}$ has two particular selfadjoint extensions that
correspond to Dirichlet and Neumann boundary conditions. 
\begin{thm}
\label{thm:C1}Fix $\mu$. Let $K_{F}$ denote a corresponding selfadjoint
realization of $\Delta_{\mu}$. For every $g\in L^{2}\left(\mu\right)$,
set 
\begin{equation}
f\left(t\right):=\int_{0}^{t}\left(\int_{0}^{x}gd\mu\right)dx.
\end{equation}
Then 
\begin{equation}
\left(K_{F}f\right)\left(t\right)=\left(f'\right)^{\mu}=g,
\end{equation}
and the eigenvalue problem may be stated as 
\begin{equation}
g'\left(t\right)=\lambda\int_{0}^{t}gd\mu.
\end{equation}
\end{thm}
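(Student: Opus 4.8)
The plan is to verify the asserted inverse formula directly from the definitions, by peeling off the two nested integrations against the two differentiations that compose $K_{F}=\nabla_{\mu}\frac{d}{dx}$. Write $h\left(x\right):=\int_{0}^{x}g\,d\mu$, so that the given function factors as $f\left(t\right)=\int_{0}^{t}h\left(x\right)\,dx$. I would then carry out the verification in two steps, one for each layer of integration.

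First, the \emph{outer} (ordinary) integral. Since $g\in L^{2}\left(\mu\right)\subset L^{1}\left(\mu\right)$ and $\mu$ is non-atomic, the function $h$ is exactly of the form in (\ref{eq:Dmax}) with $h\left(0\right)=0$ and $\mu$-derivative $g\in L^{2}\left(\mu\right)$; hence $h\in\mathscr{D}_{1}\subset C\left(\left[0,1\right]\right)$. Continuity of $h$ makes the fundamental theorem of calculus for the Lebesgue integral applicable to the outer integral, giving $f\in C^{1}$ with $f'\left(t\right)=h\left(t\right)=\int_{0}^{t}g\,d\mu$ for every $t$. Second, the \emph{inner} $\mu$-derivative. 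Because $f'=h$ has the defining form $f'\left(t\right)=f'\left(0\right)+\int_{0}^{t}g\,d\mu$ with $f'\left(0\right)=0$, the characterization (\ref{eq:C2}) of $\nabla_{\mu}$ (equivalently \lemref{C1}, eq.\ (\ref{eq:ca5})) yields $\nabla_{\mu}\left(f'\right)=g$. Combining the two steps, $f'\in\mathscr{D}_{1}$ places $f$ in $dom\left(\Delta_{\mu}\right)$, and $K_{F}f=\Delta_{\mu}f=\nabla_{\mu}\left(f'\right)=\left(f'\right)^{\mu}=g$, which is the first assertion.

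For the eigenvalue reformulation, I would read the map $g\mapsto f$ as producing the solution of the initial value problem $\Delta_{\mu}f=g$ subject to $f\left(0\right)=f'\left(0\right)=0$. An eigenfunction $K_{F}\psi=\lambda\psi$ then satisfies $\psi=\lambda\left(K_{F}^{-1}\psi\right)$, i.e.\ $\psi\left(t\right)=\lambda\int_{0}^{t}\int_{0}^{x}\psi\,d\mu\,dx$; differentiating once in $t$ (again by the outer fundamental theorem of calculus) gives $\psi'\left(t\right)=\lambda\int_{0}^{t}\psi\,d\mu$, which is precisely the stated integral form of the eigenvalue equation with $g=\psi$.

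The computation itself is routine once the two definitions are unwound; the two points that require genuine care are the regularity/FTC step, namely confirming that $h\in\mathscr{D}_{1}$ is continuous so that $f\in C^{1}$ and the outer differentiation is justified, and the interpretation of the phrase \emph{selfadjoint realization}. The explicit $f$ carries homogeneous data $f\left(0\right)=f'\left(0\right)=0$ at the left endpoint, so it is the solution of the associated initial value problem; since $\Delta_{\mu,0}$ admits several selfadjoint extensions, identifying $g\mapsto f$ with the honest two-sided inverse (Green operator) of one specific boundary-value realization requires in addition a matching condition at the right endpoint $t=1$. I expect this boundary-condition bookkeeping, rather than the differential-operator computation, to be the only delicate issue.
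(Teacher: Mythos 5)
Your proposal is correct and follows essentially the same route as the paper: unwind the two integrations against the two differentiations (the fundamental theorem of calculus for the outer integral, the defining property (\ref{eq:C2})/(\ref{eq:ca5}) of $\nabla_{\mu}$ for the inner one) to obtain $K_{F}f=g$, then substitute the definition of $f$ into $K_{F}f=\lambda f$ and differentiate once to get $g'\left(t\right)=\lambda\int_{0}^{t}g\,d\mu$. The paper's own proof is in fact terser---it records only the equivalence chain $K_{F}f=\lambda f\Leftrightarrow g=\lambda f\Leftrightarrow g\left(t\right)=\lambda\int_{0}^{t}\int_{0}^{x}g\,d\mu\,dx$ and treats $K_{F}f=g$ as immediate from the construction---so your explicit verification of the first assertion (via $h\in\mathscr{D}_{1}$) and your remark about the left-endpoint data $f\left(0\right)=f'\left(0\right)=0$ versus the boundary conditions of the chosen selfadjoint realization supply details the paper leaves implicit.
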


\begin{proof}
We have
\begin{eqnarray*}
\left(K_{F}f\right)\left(t\right) & = & \lambda f\\
 & \Updownarrow\\
g & = & \lambda f\\
 & \Updownarrow\\
g\left(t\right) & = & \lambda\int_{0}^{t}\left(\int_{0}^{x}gd\mu\right)dx.
\end{eqnarray*}
And so $g'\left(t\right)=\lambda\int_{0}^{t}gd\mu$.
\end{proof}
\begin{cor}
Assume $\mu\ll dx$, and $d\mu/dx=M>0$. Let 
\[
dW_{t}^{\left(\mu\right)}=M^{-1/2}\left(x\right)dB_{t},
\]
where $B_{t}$ is standard Brownian motion. Then, 
\[
u\left(t,x\right):=\mathbb{E}_{x}\left(f\left(W_{t}^{\left(\mu\right)}\right)\right)
\]
satisfies 
\[
\frac{\partial u}{\partial t}=\frac{1}{2}K_{F}u.
\]
\end{cor}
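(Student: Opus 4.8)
The plan is to read this as a Feynman--Kac / Kolmogorov-backward-equation statement: the expectation functional $u(t,x)=\mathbb{E}_{x}\bigl(f(W_{t}^{(\mu)})\bigr)$ of the diffusion $W_{t}^{(\mu)}$ is, by general diffusion theory, governed by the infinitesimal generator of that diffusion, so the whole content reduces to identifying this generator with $\tfrac{1}{2}K_{F}$. Thus the first step is to compute the generator of the SDE $dW_{t}^{(\mu)}=M^{-1/2}(W_{t}^{(\mu)})\,dB_{t}$. Since there is no drift term and the diffusion coefficient is $\sigma(x)=M^{-1/2}(x)$, the standard one-dimensional generator formula gives
\[
\mathcal{L}=\tfrac{1}{2}\sigma(x)^{2}\frac{d^{2}}{dx^{2}}=\tfrac{1}{2}M^{-1}(x)\frac{d^{2}}{dx^{2}}.
\]

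The second step is to match $\mathcal{L}$ with $\tfrac{1}{2}K_{F}$ using \lemref{abs}. Under the hypotheses $\mu\ll dx$ and $d\mu/dx=M>0$, that lemma asserts $\nabla_{\mu}g=M^{-1}g'$ for $g\in\mathscr{D}_{1}$. Applying it to $g=f'$ and recalling $K_{F}=\nabla_{\mu}\frac{d}{dx}$, I obtain $K_{F}f=\nabla_{\mu}f'=M^{-1}(f')'=M^{-1}f''$, so that $\tfrac{1}{2}K_{F}=\mathcal{L}$ as differential operators. At this point the asserted PDE $\partial_{t}u=\tfrac{1}{2}K_{F}u$ is literally the same as $\partial_{t}u=\mathcal{L}u$.

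To produce the backward equation itself I would apply Itô's lemma to $f(W_{t}^{(\mu)})$: with $d\langle W^{(\mu)}\rangle_{t}=\sigma(W_{t}^{(\mu)})^{2}\,dt=M^{-1}(W_{t}^{(\mu)})\,dt$,
\[
df(W_{t}^{(\mu)})=f'(W_{t}^{(\mu)})\,dW_{t}^{(\mu)}+\tfrac{1}{2}M^{-1}(W_{t}^{(\mu)})\,f''(W_{t}^{(\mu)})\,dt.
\]
Taking $\mathbb{E}_{x}$ kills the martingale (stochastic-integral) term, leaving $\frac{d}{dt}\mathbb{E}_{x}\bigl(f(W_{t}^{(\mu)})\bigr)=\mathbb{E}_{x}\bigl((\mathcal{L}f)(W_{t}^{(\mu)})\bigr)$; the Markov/semigroup property then moves the generator outside the expectation, $\mathbb{E}_{x}\bigl((\mathcal{L}f)(W_{t}^{(\mu)})\bigr)=\mathcal{L}_{x}u(t,x)$, which yields $\partial_{t}u=\mathcal{L}u=\tfrac{1}{2}K_{F}u$.

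The main obstacle is the analytic justification rather than the formal computation: $M^{-1/2}$ need not be Lipschitz, so strong existence and uniqueness for the SDE is not automatic, and both the differentiation under the expectation and the use of the Markov property require $u(t,\cdot)$ (equivalently $f$) to lie in $dom(K_{F})$. I would resolve this by invoking the Feller--Itô--McKean theory of regular one-dimensional diffusions: since $M>0$, $W^{(\mu)}$ is a regular diffusion on the natural scale $s(x)=x$ with speed measure $2\mu$, whose generator in Feller's form $\frac{d}{d(2\mu)}\frac{d}{dx}=\tfrac{1}{2}\frac{d}{d\mu}\frac{d}{dx}=\tfrac{1}{2}K_{F}$ is exactly the operator in question --- this is precisely the Krein--Feller identification. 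The smoothing property of the associated transition semigroup then guarantees $u(t,\cdot)\in dom(K_{F})$ for $t>0$, so that the backward equation holds in the strong sense and the proof is complete.
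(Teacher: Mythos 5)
Your proposal is correct and follows essentially the same route as the paper's proof: apply It\^o's lemma to $f\left(W_{t}^{\left(\mu\right)}\right)$ with quadratic variation $M^{-1}dt$, take expectation so the stochastic-integral term vanishes, and identify $K_{F}f=M^{-1}f''$ via \lemref{abs}. Your additions (the explicit Markov-property step extending the identity from $t=0$ to all $t$, and the Feller--It\^o--McKean regularity discussion) supply details the paper leaves implicit, but the core argument is identical.
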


\begin{proof}
An application of Ito's lemma (see e.g., \cite{MR562914}) gives 
\[
df\left(W_{t}\right)=f'\left(W_{t}\right)M^{-1/2}dB_{t}+\frac{1}{2}f''\left(W_{t}\right)M^{-1}dt
\]
and 
\[
\frac{d}{dt}\mathbb{E}\left(f\left(W_{t}\right)\right)\big|_{t=0}=\frac{1}{2}M^{-1}\left(x\right)f''\left(x\right)=\frac{1}{2}K_{F}f\left(x\right),
\]
where $K_{F}f=M^{-1}f''$, by \lemref{abs}. 
\end{proof}

\subsection{\label{subsec:KFD}Krein-Feller diffusion}

We now turn to a detailed discussion of diffusion, and its connection
to the Krein-Feller operator. As before, our starting point is a fixed
measure $\mu$ supported on the real line, and we let $K_{F}$ be
the corresponding Krein-Feller operator. The measure $\mu$ under
consideration is assumed to be defined on the Borel-sigma algebra,
and further assumed positive, sigma-finite, and non-atomic. The measure
$\mu$ may be singular, and the main novelty in our analysis of the
diffusion semigroup is for the singular case; including the case of
IFS-measures. We first introduce the \emph{centered Gaussian process}
$W^{\left(\mu\right)}$ having $\mu$ as its quadratic variation.
We then note that Ito's lemma applies to $W^{\left(\mu\right)}$,
see (\ref{eq:C3-6}). We further study the Markov semigroup ((\ref{eq:CC-8})
and \lemref{Scont}) corresponding to $W^{\left(\mu\right)}$. In
our two main results \lemref{C9} and \thmref{Cc16} below, we identify
the selfadjoint extension of $K_{F}$ from section \ref{sec:DP} as
the \emph{infinitesimal generator} for this diffusion semigroup, \lemref{C9}.
In \thmref{Cc16} we introduce a \emph{time-change} in our characterization
of the \emph{diffusion semigroup}.

Below we show that every positive non-atomic Borel measure (see \lemref{C1})
gives rise to a naturally associated dual pair of operators, as per
\defref{sp}. The dual pair is made precise in (\ref{eq:cc29}), and
the figure in \thmref{D2}.

Fix a measure space $\left(J,\mathscr{B},\mu\right)$ with $J=[0,\infty)$.
We introduce the corresponding positive definite kernel: 
\begin{equation}
K_{\mu}\left(x,y\right)=\mu\left(\left[0,x\wedge y\right]\right).\label{eq:C2-1}
\end{equation}
Note that if $\mu=\lambda=dx=$ the Lebesgue measure then 
\begin{equation}
K_{\lambda}\left(x,y\right)=x\wedge y,\label{eq:C2-2}
\end{equation}
the usual \emph{covariance function }for Brownian motion.

Starting with $\mu$ (assumed non-atomic) and the RKHS $\mathscr{H}\left(\mu\right)$,
we arrive at a generalized Brownian motion $W_{x}^{\left(\mu\right)}$;
i.e., a Gaussian process with 
\begin{equation}
\mathbb{E}\left(W_{x}^{\left(\mu\right)}\right)=0,
\end{equation}
and 
\begin{equation}
\mathbb{E}\left(W_{x}^{\left(\mu\right)}W_{y}^{\left(\mu\right)}\right)=K_{\mu}\left(x,y\right),\quad\forall x,y\in[0,\infty).\label{eq:C2-4}
\end{equation}

A detailed description of $\{W_{x}^{\left(\mu\right)}\}$ and its
Ito-calculus is contained in many relevant papers, and books; see
e.g., \cite{MR4106884,MR4020693,MR3687240,MR3402823,MR2966130,MR2793121,MR0345224,MR154338,MR2571742,doi:10.1142/11980}.

Here we shall need the following: Let $\varphi\in C^{2}$, and consider
the corresponding diffusion semigroup (depending on $\mu$): 
\begin{equation}
u\left(t,x\right)\coloneqq\mathbb{E}\left(\varphi\left(W_{t}^{\left(\mu\right)}\right)\mid W_{0}^{\left(\mu\right)}=x\right)\label{eq:C2-5}
\end{equation}
where $\mathbb{E}$ in (\ref{eq:C2-5}) refers to the expectation
$\mathbb{E}\left(\cdot\right)=\int_{\Omega}\left(\cdot\right)d\mathbb{P}$
for the probability space associated to (\ref{eq:C2-1}). Then $\mathbb{E}(\cdots\mid W_{0}^{\left(\mu\right)}=x)$
in (\ref{eq:C2-5}) refers to conditioning with all paths $\omega$
s.t. $\omega\left(0\right)=x$. Here we also use the representation
\begin{equation}
W_{t}^{\left(\mu\right)}\left(\omega\right)=\omega\left(t\right),\quad\forall\omega\in\Omega\label{eq:C2-6}
\end{equation}
for the Gaussian process $\{W_{t}^{\left(\mu\right)}\}$. Since the
Gaussian process $W^{\left(\mu\right)}$ has independent increments,
it follows that eq. (\ref{eq:C2-5}) defines a semigroup (see e.g.,
\cite{MR0345224,MR126625,MR2973393,MR3571410}), and we shall call
it the \emph{Markov semigroup}. Its properties and its infinitesimal
generator will be identified in \lemref{C9} below, and in the subsequent
discussion.

The Gaussian process from (\ref{eq:C2-4}) and (\ref{eq:C2-6}) is
often called a generalized Brownian motion, or a Gaussian field. The
associated Ito-integral is also used in the proof of \lemref{C9}
below.

We now introduce the Krein-Feller operator 
\begin{equation}
K_{F}\coloneqq\frac{\partial^{2}}{\partial\mu\partial x}\label{eq:C2-7}
\end{equation}
(see above.) In our discussion of (\ref{eq:C2-5}), we shall consider
$K_{F}$ as acting in the $x$-variable.
\begin{lem}
\label{lem:C9}The diffusion (\ref{eq:C2-5}) is generated by the
following generalized heat equation: 
\begin{equation}
\frac{\partial u}{\partial t}=\frac{1}{2}K_{F}u,\quad u\left(0,x\right)=\varphi\left(x\right)\label{eq:C2-8}
\end{equation}
where $\varphi$ is a fixed continuous function. 
\end{lem}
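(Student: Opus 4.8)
The plan is to exploit that \eqref{eq:C2-5} already defines a strongly continuous semigroup $P_t\varphi(x):=u(t,x)$ on $L^{2}(\mu)$ (this is built into the independent-increments property of $W^{(\mu)}$ recorded after \eqref{eq:C2-6}, which gives the Chapman--Kolmogorov/Markov property), and to reduce the whole statement to the identification of its infinitesimal generator as $\tfrac12 K_F$. Indeed, once the generator is $\tfrac12 K_F$, the assertion \eqref{eq:C2-8} is nothing but the abstract statement that $t\mapsto P_t\varphi$ solves $\partial_t u=\tfrac12 K_F u$ with $u(0,\cdot)=\varphi$. Thus everything reduces to the single limit
\[
\lim_{t\downarrow 0}\frac{u(t,x)-\varphi(x)}{t}=\tfrac12 K_F\varphi(x),\qquad \varphi\in\operatorname{dom}(K_F)\cap C^{2},
\]
together with the commutation $P_t K_F=K_F P_t$, which promotes this $t=0$ identity to every $t>0$; the passage from a $C^{2}$ core to general continuous $\varphi$ is then handled by density and the contractivity of $P_t$.

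I would carry out the generator computation by the generalized It\^o formula for $W^{(\mu)}$, in exact parallel with the absolutely continuous model treated above (cf.\ \lemref{abs} and the It\^o computation following \thmref{C1}), but now with $\mu$ possibly singular. For $\varphi\in C^{2}$ one writes
\[
d\varphi(W_t^{(\mu)})=\varphi'(W_t^{(\mu)})\,dW_t^{(\mu)}+\tfrac12\,d\Lambda_t,
\]
where $d\Lambda_t$ is the bounded-variation (second-order) part. The crucial point is that, through the covariance \eqref{eq:C2-4}, the occupation/quadratic-variation structure of $W^{(\mu)}$ is governed by $\mu$, so that $\Lambda$ is the additive functional whose increments integrate $\varphi''$ against $\mu$; by the definition of the $\mu$-derivative in \lemref{C1} (eq.\ \eqref{eq:ca3}) this equals $\nabla_\mu(\varphi')=K_F\varphi$. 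Taking the conditional expectation $\mathbb{E}(\,\cdot\mid W_0^{(\mu)}=x)$ annihilates the martingale term $\int\varphi'(W^{(\mu)})\,dW^{(\mu)}$, leaving
\[
u(t,x)=\varphi(x)+\tfrac12\int_0^{t}\mathbb{E}\big(K_F\varphi(W_s^{(\mu)})\mid W_0^{(\mu)}=x\big)\,ds .
\]
Differentiating at $t=0$ and using continuity of $s\mapsto\mathbb{E}_x\big(K_F\varphi(W_s^{(\mu)})\big)$ (valid since $K_F\varphi$ is continuous and $W^{(\mu)}$ starts at $x$) yields $\partial_t u|_{t=0}=\tfrac12 K_F\varphi(x)$.

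The rigorous backbone for the generator identity and for the commutation $P_tK_F=K_FP_t$ is the dual-pair/quadratic-form picture of \secref{DP} and of the present section: by \lemref{ST} and \thmref{vN} the form $QF(K_F)(\varphi,\psi)=-\langle\varphi',\psi'\rangle_{L^{2}(\lambda)}$ is closable, and its canonical selfadjoint realization is the operator furnished by the dual-pair factorization of $K_F$ in \thmref{D2} (the operator $T_\mu T_\mu^{*}$ of the Remark following the Corollary above). Since this realization is selfadjoint, $P_t=e^{\frac t2 K_F}$ and the commutation is automatic from the spectral calculus. I expect the main obstacle to be precisely the generalized It\^o formula in the singular case: when $\mu\perp\lambda$ (e.g.\ a Cantor measure) the classical correction $\tfrac12\varphi''\,dt$ must be replaced by an object supported on the $\mu$-mass, and one must show that the bounded-variation part $\Lambda$ of $\varphi(W^{(\mu)})$ is the additive functional $\int_0^{t}K_F\varphi(W_s^{(\mu)})\,ds$ rather than an ordinary time integral. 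This is where the Leibniz rule \eqref{eq:leibniz} of \lemref{T1} and the fundamental identity \eqref{eq:ca3} do the real work: they let one legitimately replace the formal expression $\varphi''\,(dW^{(\mu)})^{2}$ by $\nabla_\mu(\varphi')\,d\mu=K_F\varphi\,d\mu$ along the occupation measure of the process, after which the martingale-plus-drift decomposition and the conditional-expectation argument above go through verbatim.
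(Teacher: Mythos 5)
Your overall skeleton (Markov semigroup from independent increments, It\^o's formula, quadratic variation) is the same as the paper's, which proves \lemref{C9} only as a sketch: it cites the literature and isolates the two key tools, the It\^o lemma for $W^{(\mu)}$ and the quadratic-variation identity \eqref{eq:C2-10}. But your central step --- the identification of the bounded-variation part $\Lambda$ --- is wrong, and it fails precisely in the singular case the paper cares about. The identity $(dW_t^{(\mu)})^2=\mu(dt)$ is a statement about the \emph{time} variable: It\^o's formula (written out in the proof of \thmref{Cc16}, eq.\ \eqref{eq:C3-6}) gives the correction $\tfrac12\int_0^t\varphi''(W_s^{(\mu)})\,\mu(ds)$, i.e.\ the \emph{ordinary} second derivative of $\varphi$ integrated against $\mu$ in $s$. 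You convert this into $\tfrac12\int_0^t K_F\varphi(W_s^{(\mu)})\,ds$ (your display; in the text you instead write $\nabla_\mu(\varphi')\,d\mu$, so the two versions of your own claim already disagree about which time measure appears) by appealing to \eqref{eq:ca3}. That appeal is illegitimate: \lemref{C1} relates the \emph{spatial} Stieltjes measure $d(\varphi')$ to $\mu$ and requires $d(\varphi')\ll\mu$. For $\varphi\in C^2$ one has $d(\varphi')=\varphi''\,dx\ll\lambda$, so when $\mu\perp\lambda$ (e.g.\ a Cantor measure) the measures $d(\varphi')$ and $\mu$ are mutually singular, and by the paper's own convention $\nabla_\mu(\varphi')=0$. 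Thus on your proposed core $K_F\varphi\equiv0$, your integral equation degenerates to $u(t,x)=\varphi(x)$ for all $t$, which is false; equivalently, $dom(K_F)\cap C^{2}$ contains only affine functions when $\mu$ is singular, so the closing density argument has nothing to extend from.

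Note that the paper never performs the conversion you attempt. Its rigorous statement, \thmref{Cc16}, keeps $\mu$ on the time side and concludes $\nabla_t^{(\mu)}u=\tfrac12\,\partial_x^{2}u$ --- the $\mu$-derivative in $t$ against the ordinary Laplacian in $x$ --- rather than $\partial_t u=\tfrac12 K_F u$ with $K_F$ acting in $x$; \lemref{C9} itself is left at the level of a formal statement backed by the cited literature. Your additive-functional picture would be correct for the classical Krein--Feller gap diffusion, i.e.\ Brownian motion time-changed by the inverse of $\int L_t^{y}\,\mu(dy)$, acting on $\varphi\in dom(K_F)$; but that is a different process from the Gaussian process \eqref{eq:C2-4}--\eqref{eq:C2-6} which defines \eqref{eq:C2-5}, and $C^{2}$ functions are not in $dom(K_F)$ in the singular case. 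So the gap is genuine: the generator computation at the heart of your argument does not go through as written.
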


\begin{rem}
The special case of (\ref{eq:C2-8}) corresponding to $\mu=\lambda=$
Lebesgue measure is 
\begin{equation}
\frac{\partial u}{\partial t}=\frac{1}{2}\frac{\partial^{2}u}{\partial x^{2}},\quad u\left(0,x\right)=\varphi\left(x\right).\label{eq:C2-9}
\end{equation}
 
\end{rem}

\begin{proof}
We shall refer to the literature, e.g., \cite{MR4106884,MR4020693,MR3687240,MR3402823,doi:10.1142/11980}.
Suffice it to say that \emph{the Ito-lemma} for the Gaussian process
$\{W_{t}^{\left(\mu\right)}\}$ is a key tool; together with the following
fact for the quadratic variation: 
\begin{equation}
\left(dW_{t}^{\left(\mu\right)}\right)^{2}=\mu\left(dt\right).\label{eq:C2-10}
\end{equation}
\end{proof}
Set $J=\left[0,1\right]$. Fix $\mu$, $\sigma$-finite and non-atomic.
Let $\lambda=dx=$ Lebesgue measure, and consider the following diagram
\[
\xymatrix{\mathscr{L}^{2}\left(\mu\right)\ar@/^{1.3pc}/[rr]^{T_{\mu}} &  & L^{2}\left(\mu\right)\ar@/^{1.3pc}/[ll]^{D=-d/dx}}
\]
with 
\[
\mathscr{L}^{2}\left(\mu\right):=\overline{dom\left(T_{\mu}\right)}^{L^{2}\left(\lambda\right)}.
\]
Let $K_{F}$ be as in (\ref{eq:C2-7}), then 
\begin{equation}
K_{F}\subseteq T_{\mu}T_{\mu}^{*},\label{eq:Cc-2}
\end{equation}
both are operators in $L^{2}\left(\mu\right)$. But $T_{\mu}T_{\mu}^{*}$
is \emph{selfadjoint} by general theory (see \secref{DP}), and restriction
of symmetric is symmetric.
\begin{thm}
Consider a fixed positive non-atomic Borel measure $\mu$ on $J=\left[0,b\right]$,
$b<\infty$; and define an operator $A$ on $L^{2}\left(\mu\right)=L^{2}\left(J,\mu\right)$
as follows: 

For $\varphi\in L^{2}\left(\mu\right)\cap C$, set 
\begin{equation}
\left(A\varphi\right)\left(x\right)=\int_{0}^{x}\left(\int_{0}^{y}\varphi\left(s\right)\mu\left(ds\right)\right)dy,\label{eq:Int1}
\end{equation}
then 
\begin{equation}
K_{F}A\varphi=\varphi.\label{eq:Int2}
\end{equation}
\end{thm}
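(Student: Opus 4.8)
The claim asserts that the explicit double-integral operator $A$ defined in (\ref{eq:Int1}) is a right inverse for the Krein-Feller operator $K_F = \frac{d}{d\mu}\frac{d}{dx}$. The natural strategy is to simply apply the two differentiations to $A\varphi$ in the order dictated by $K_F = \nabla_\mu \circ \frac{d}{dx}$ and verify each step collapses the integrals by the Fundamental Theorem of Calculus. First I would record that $A\varphi$ lies in the domain $dom(\Delta_\mu)$: writing $F(y) := \int_0^y \varphi\,d\mu$, we have $(A\varphi)(x) = \int_0^x F(y)\,dy$, and since $\varphi \in L^2(\mu)\cap C$, the inner integral $F$ is in $\mathscr{D}_1$ (it has the form $F(0) + \int_0^y \nabla_\mu F\,d\mu$ with $\nabla_\mu F = \varphi$), so $(A\varphi)'= F \in \mathscr{D}_1$, which is exactly the condition $f' \in \mathscr{D}_1$ defining $dom(\Delta_\mu)$.

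The computation itself proceeds in two steps. The outer $\frac{d}{dx}$ applied to $(A\varphi)(x) = \int_0^x F(y)\,dy$ gives, by the ordinary FTC (the integrand $F$ being continuous),
\begin{equation}
\frac{d}{dx}(A\varphi)(x) = F(x) = \int_0^x \varphi(s)\,\mu(ds).
\end{equation}
This function is precisely the antiderivative $v$ with $\nabla_\mu v = \varphi$; indeed $F(x) = F(0) + \int_0^x \varphi\,d\mu$ with $F(0)=0$, so by the defining equation (\ref{eq:ca5}) characterizing $\nabla_\mu$ (equivalently (\ref{eq:Dmax})), applying $\nabla_\mu = T_\mu$ recovers $\varphi$:
\begin{equation}
\nabla_\mu\!\left(\frac{d}{dx}A\varphi\right) = \nabla_\mu F = \varphi.
\end{equation}
Composing, $K_F A\varphi = \nabla_\mu\bigl(\tfrac{d}{dx}A\varphi\bigr) = \varphi$, which is (\ref{eq:Int2}).

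The only genuine subtlety — and the step I would flag as the main obstacle — is the second differentiation: one must justify that $F(x)=\int_0^x\varphi\,d\mu$ is a legitimate element of $\mathscr{D}_1$ so that $\nabla_\mu F$ is well-defined and equals $\varphi$ in $L^2(\mu)$. This is where the non-atomicity of $\mu$ and the continuity of $\varphi$ matter: they guarantee $F$ is continuous and locally of bounded variation, so its Stieltjes measure $dF = \varphi\,d\mu$ is absolutely continuous with respect to $\mu$ and \lemref{C1} applies, yielding $\nabla_\mu F = dF/d\mu = \varphi$. Once this identification is in place the result is immediate; the first differentiation is entirely classical since we are differentiating a Riemann integral of a continuous function.
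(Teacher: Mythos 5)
Your proposal is correct and follows essentially the same route as the paper's own proof: differentiate $(A\varphi)(x)=\int_0^x F(y)\,dy$ once in $x$ by the fundamental theorem of calculus (using continuity of $F(y)=\int_0^y\varphi\,d\mu$), then observe that $F$ has exactly the form required in (\ref{eq:Dmax}) so that $\nabla_\mu F=\varphi$ by definition. The only difference is one of emphasis: you spell out the domain membership $A\varphi\in dom(\Delta_\mu)$ and the role of non-atomicity in the continuity of $F$, which the paper compresses into the single remark that $L^2(\mu)\subset L^1(\mu)$ makes the inner integral continuous.
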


\begin{proof}
Since $L^{2}\left(\mu\right)\subset L^{1}\left(\mu\right)$ the function
$y\longmapsto\int_{0}^{y}\varphi\left(s\right)\mu\left(ds\right)$
is continuous, and so $x\longmapsto\left(A\varphi\right)\left(x\right)$
is $C^{1}$ (one time differentiable with $\left(A\varphi\right)'\in C\left(J\right)$.)
Hence, for the LHS of (\ref{eq:Int2}) we have 
\[
K_{F}A:\varphi\longmapsto\nabla_{\mu}\frac{d}{dx}A\varphi=\nabla_{\mu}\int_{0}^{x}\varphi\left(s\right)\mu\left(ds\right)=\varphi\left(x\right),
\]
which is the desired conclusion (\ref{eq:Int2}).
\end{proof}
\begin{cor}
\label{cor:cpt1}Let the measure $\mu$ be specified as above, and
set 
\[
\left(A\varphi\right)\left(x\right)=\int_{0}^{x}\left(\int_{0}^{y}\varphi\left(s\right)\mu\left(ds\right)\right)dy
\]
(see (\ref{eq:Int1})), which defines a compact integral operator
in $L^{2}\left(\mu\right)$. 

Then $A=A_{\mu}$ is bounded and compact with triangular integral
kernel 
\begin{equation}
a\left(x,s\right)=\chi_{\left[0,x\right]}\left(s\right)\left(x-s\right).\label{eq:Int2a}
\end{equation}
\end{cor}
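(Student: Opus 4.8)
The plan is to verify the claimed integral-kernel formula (\ref{eq:Int2a}) by an application of Fubini's theorem, and then to obtain \emph{both} boundedness and compactness at one stroke by exhibiting $A$ as a Hilbert--Schmidt operator on $L^{2}\left(\mu\right)$.

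First I would rewrite $A$ as a single integral against $\mu$. Setting $\Phi\left(y\right):=\int_{\left[0,y\right]}\varphi\,d\mu$, we have $\left(A\varphi\right)\left(x\right)=\int_{0}^{x}\Phi\left(y\right)\,dy$, an iterated integral over the triangular region $\left\{ \left(y,s\right):0\le s\le y\le x\right\} $ in which the outer variable $y$ is integrated against Lebesgue measure and the inner variable $s$ against $\mu$. Since $\varphi\in L^{2}\left(\mu\right)\subset L^{1}\left(\mu\right)$ and $\mu\left(J\right)<\infty$, the integrand is $\left(\lambda\otimes\mu\right)$-integrable on this region, so Fubini applies; carrying out the $dy$-integration first (for fixed $s$, the variable $y$ runs over $\left[s,x\right]$, of Lebesgue length $x-s$) gives
\[
\left(A\varphi\right)\left(x\right)=\int_{\left[0,x\right]}\left(x-s\right)\varphi\left(s\right)\,\mu\left(ds\right)=\int_{J}a\left(x,s\right)\varphi\left(s\right)\,\mu\left(ds\right),\qquad a\left(x,s\right)=\chi_{\left[0,x\right]}\left(s\right)\left(x-s\right),
\]
which is exactly (\ref{eq:Int2a}). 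Non-atomicity of $\mu$ makes the choice of interval endpoints irrelevant.

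Next I would estimate the kernel. On $J\times J=\left[0,b\right]\times\left[0,b\right]$ one has the crude bound $0\le a\left(x,s\right)\le b$, so $a$ is bounded, and because $\mu$ is a \emph{finite} Borel measure on the compact interval $J$,
\[
\left\Vert a\right\Vert _{L^{2}\left(\mu\otimes\mu\right)}^{2}=\iint_{J\times J}\left|a\left(x,s\right)\right|^{2}\mu\left(dx\right)\mu\left(ds\right)\le b^{2}\mu\left(J\right)^{2}<\infty.
\]
Thus $a\in L^{2}\left(\mu\otimes\mu\right)$, i.e. $A$ is an integral operator with Hilbert--Schmidt kernel. Since $C\left(J\right)$ is dense in $L^{2}\left(\mu\right)$, the bounded operator determined by this kernel agrees with $A$ on the dense domain where (\ref{eq:Int1}) was defined, hence \emph{is} the continuous extension of $A$. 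Standard theory then gives $\left\Vert A\right\Vert \le\left\Vert a\right\Vert _{L^{2}\left(\mu\otimes\mu\right)}<\infty$ (boundedness), and, since every Hilbert--Schmidt operator is compact, compactness of $A$.

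There is no deep obstacle here; the only points needing care are the justification of the Fubini exchange between the Lebesgue and the $\mu$ integrations (handled by the integrability bound above) and the standing assumption $\mu\left(J\right)<\infty$, which holds in all cases of interest (the IFS/Cantor measures are probability measures). If one prefers to bypass the Hilbert--Schmidt machinery, compactness follows directly via Arzel\`a--Ascoli: the Cauchy--Schwarz estimate $\left|\left(A\varphi\right)\left(x\right)\right|\le\left\Vert a\left(x,\cdot\right)\right\Vert _{L^{2}\left(\mu\right)}\left\Vert \varphi\right\Vert _{L^{2}\left(\mu\right)}\le b\sqrt{\mu\left(J\right)}\left\Vert \varphi\right\Vert _{L^{2}\left(\mu\right)}$ shows $A$ maps the unit ball of $L^{2}\left(\mu\right)$ into a uniformly bounded family, while continuity of $x\mapsto a\left(x,\cdot\right)\in L^{2}\left(\mu\right)$ (dominated convergence) yields equicontinuity; hence $A\left(\text{ball}\right)$ is relatively compact in $C\left(J\right)$, and composition with the bounded inclusion $C\left(J\right)\hookrightarrow L^{2}\left(\mu\right)$ gives compactness of $A$ in $L^{2}\left(\mu\right)$.
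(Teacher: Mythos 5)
Your proof is correct, and it differs from the paper's on the decisive point, namely compactness. The paper identifies the kernel by writing out the bilinear form $\langle A\varphi,\psi\rangle_{L^{2}(\mu)}$ in (\ref{eq:Int3})--(\ref{eq:Int4}) (essentially asserting the interchange of integrals that you carry out explicitly via Fubini), obtains boundedness from a direct Schwarz estimate, and then proves compactness by hand: it constructs finite-rank approximants $|\chi_{B}\rangle\langle\chi_{B'}|$ from a refining filter of partitions of the support of $\mu$, using non-atomicity to drive the partition masses $\mu(B)$ to zero and so obtain norm convergence to $A$. Your Hilbert--Schmidt argument subsumes that construction: since $0\le a\le b$ on $J\times J$ and $\mu(J)<\infty$, the kernel lies in $L^{2}(\mu\otimes\mu)$, and the standard implication ``$L^{2}$-kernel $\Rightarrow$ Hilbert--Schmidt $\Rightarrow$ compact'' is proved by exactly the kind of simple-kernel, finite-rank approximation the paper sketches; you cite the theorem instead of rebuilding it, and you get the quantitative bound $\Vert A\Vert\le\Vert a\Vert_{L^{2}(\mu\otimes\mu)}$ for free. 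Two small dividends of your route: (i) it never uses non-atomicity (that hypothesis only enters to make the endpoint convention $[0,y]$ versus $[0,y)$ immaterial, and even that is moot since $\{y=s\}$ is Lebesgue-null in the $dy$-integration), so it covers finite atomic measures as well, whereas the paper's partition argument as written leans on non-atomicity; (ii) your Fubini derivation is an actual proof of the kernel formula (\ref{eq:Int2a}), which the paper only states. The finiteness assumption $\mu(J)<\infty$ that you flag is equally implicit in the paper's Schwarz estimate and is automatic for a Borel (Radon) measure on the compact interval $J=[0,b]$. Your alternative Arzel\`a--Ascoli argument is also sound and is the closest in spirit to the classical treatment of such Volterra-type kernels, exploiting that $x\mapsto a(x,\cdot)$ is Lipschitz into $L^{2}(\mu)$.
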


\begin{proof}
Without loss of generality we may work with $b=1$ and real Hilbert
space. An easy application of Schwarz to $L^{2}\left(\mu\right)$
shows that $A:L^{2}\left(\mu\right)\rightarrow L^{2}\left(\mu\right)$
is bounded. We have 
\begin{equation}
\left\langle A\varphi,\psi\right\rangle _{L^{2}\left(\mu\right)}=\int_{0}^{1}\int_{0}^{x}\left(x-s\right)\varphi\left(s\right)\psi\left(x\right)\mu\left(ds\right)\mu\left(dx\right).\label{eq:Int3}
\end{equation}
The asserted symmetry follows from this, or equivalently,
\begin{equation}
\left(A\varphi\right)\left(x\right)=\int_{0}^{1}a\left(x,s\right)\varphi\left(s\right)\mu\left(ds\right)\label{eq:Int4}
\end{equation}
where $a\left(x,s\right)=\chi_{\left[0,x\right]}\left(s\right)\left(x-s\right)$,
see \figref{sym}.

To see that the operator $A$ of (\ref{eq:Int1}) is compact as an
operator in $L^{2}(\mu)$, we make use of (\ref{eq:Int3}) and (\ref{eq:Int4})
as follows: We recall the fact the compact operators are the norm-closure
of finite rank operators. We then create such norm-limits of finite
rank operators with the use of the kernel (\ref{eq:Int2a}), and a
choice of a filter of partitions $P$ with disjoint Borel subsets
of the support of $\mu$. For each such partition $P$, we form rank-one
operators from the corresponding indicator functions from pairs of
sets $B,B'$ in $P$, and we then form the associated span of the
rank-one operators $\left|\chi_{B}\left\rangle \right\langle \chi_{B'}\right|$
by evaluation of (\ref{eq:Int2a}) with sample points chosen from
the partition sets. (We use Dirac's terminology $\left|\cdot\left\rangle \right\langle \cdot\right|$
for rank-one operators.) The Borel sets $B$, making up partitions,
are chosen with $\mu(B)$ finite, so the corresponding indicator functions
are in $L^{2}(\mu)$. For a fixed partition, we then form pairs of
such indicator functions, and the corresponding rank-one operators.
Since $\mu$ is chosen non-atomic, the partition-limit refinements
can be constructed such that the limit of the corresponding numbers
$\mu(B)$ is zero.
\end{proof}
\begin{figure}[H]
\includegraphics[width=0.35\textwidth]{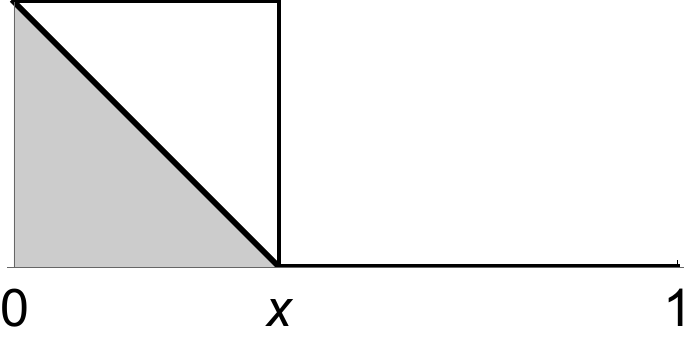}

\caption{\label{fig:sym}The kernel $a\left(x,s\right)=\chi_{\left[0,x\right]}\left(s\right)\left(x-s\right)=\left[x-s\right]_{+}=\max\left(0,x-s\right)$.}
\end{figure}

The following result gives a spectral decomposition for the Krein-Feller
operator $K_{F}$.
\begin{cor}
\label{cor:kek}Let $J=\left[0,1\right]$, and $\mu$ be a non-atomic
positive Borel measure on $J$. Set $g\left(x\right)=\mu\left(\left[0,x\right]\right)$. 

Consider $K_{F}=\frac{d}{d\mu}\frac{d}{dx}$ as a selfadjoint operator
in $L^{2}\left(\mu\right)$ with Neumann boundary condition, i.e.,
\[
dom\left(K_{F}\right)=\left\{ \psi\left(x\right)=\psi\left(0\right)+\int_{0}^{x}\left(\int_{0}^{y}fd\mu\right)dx:f\in L^{2}\left(\mu\right),\;\psi'\left(0\right)=\psi'\left(1\right)=0\right\} .
\]
Let $V:L^{2}\left(\mu\right)\rightarrow L^{2}\left(\mu\right)$ be
the integral operator defined as 
\begin{equation}
Vf\left(x\right)=\int_{0}^{1}H\left(x,t\right)f\left(t\right)dt\label{eq:it1}
\end{equation}
where 
\begin{equation}
H\left(x,t\right)=\begin{cases}
\left(g\left(x\right)-1\right)g\left(t\right) & t\leq x,\\
\left(g\left(t\right)-1\right)g\left(x\right) & x\geq t.
\end{cases}\label{eq:it2}
\end{equation}

Then $V$ is compact and selfadjoint, and we have the following eigenvalue
correspondence: 
\begin{equation}
c\in sp\left(K_{F}\right)\Longleftrightarrow c^{-1}\in sp\left(V\right).\label{eq:it3}
\end{equation}
\end{cor}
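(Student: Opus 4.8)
The plan is to recognize $V$ as the inverse (Green's operator) of the selfadjoint realization $K_{F}$, so that the correspondence (\ref{eq:it3}) reduces to the elementary spectral mapping $c\mapsto c^{-1}$ between a boundedly invertible (on the relevant subspace) selfadjoint operator and its inverse. Thus the three assertions to establish are: $V$ is selfadjoint, $V$ is compact, and $V=K_{F}^{-1}$ on the orthogonal complement of the kernel of $K_{F}$ (the constants). Granting the last point, if $K_{F}\psi=c\psi$ with $c\neq0$ then $V\psi=c^{-1}\psi$ and conversely, which is exactly (\ref{eq:it3}) for the nonzero spectrum. Selfadjointness I would read off directly from the kernel in (\ref{eq:it2}): the two branches are interchanged under $x\leftrightarrow t$, so $H$ is real and symmetric, $H(x,t)=H(t,x)$, and hence the integral operator is symmetric on $L^{2}\left(\mu\right)$ (with the integration taken against the natural measure $d\mu$, equivalently $dg$). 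Compactness I would obtain by reusing the argument of \corref{cpt1} verbatim: since $\mu$ is non-atomic one can choose a filter of Borel partitions of $\supp(\mu)$ into blocks of vanishing $\mu$-mass, sample $H$ on these blocks to form finite-rank operators $\sum\left|\chi_{B}\left\rangle\right\langle\chi_{B'}\right|$, and pass to the operator-norm limit; boundedness of the triangular-type kernel makes this a Hilbert--Schmidt estimate.

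The cleanest route to the remaining (and main) point, $V=K_{F}^{-1}$, is to conjugate by the isometric isomorphism $W_{\mu}:L^{2}\left(\lambda\right)\to L^{2}\left(\mu\right)$, $W_{\mu}\varphi=\varphi\circ g$, whose unitarity rests on $\mu\circ g^{-1}=\lambda$ (see the discussion around \lemref{tc}). Under this change of variables the pair $\left(y,s\right)=\left(g(x),g(t)\right)$ turns the kernel (\ref{eq:it2}) into the classical expression $\min(y,s)\bigl(\max(y,s)-1\bigr)$, i.e. the Green's function $\widetilde{G}$ of the corresponding constant-coefficient second-order boundary value problem on $L^{2}\left([0,1],\lambda\right)$, while the identity $\mu\circ g^{-1}=\lambda$ converts the $ds$-integration of the classical Green's operator $G_{0}$ into the $d\mu$-integration of (\ref{eq:it1}). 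One thereby expects $V=W_{\mu}\,G_{0}\,W_{\mu}^{*}$. Since $G_{0}$ is compact and selfadjoint and $W_{\mu}$ is unitary, this representation would re-derive both of the structural properties above and, more importantly, yield $\sp(V)=\sp(G_{0})$ and $\sp(K_{F})=\sp(W_{\mu}^{*}K_{F}W_{\mu})$ at one stroke; the reciprocity (\ref{eq:it3}) is then the classical fact $G_{0}=\bigl(W_{\mu}^{*}K_{F}W_{\mu}\bigr)^{-1}$ transported by $W_{\mu}$.

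The hard part will be the verification that $V$ genuinely inverts $K_{F}=\nabla_{\mu}\tfrac{d}{dx}$ with the stated Neumann domain, since $\mu$ is singular and $H$ is built from the singular function $g$, so one cannot differentiate $H$ naively. I would check $K_{F}Vf=f$ by Stieltjes calculus rather than formal differentiation: compute $\tfrac{d}{dx}(Vf)$ as the density arising from the jump of $\partial_{x}H$ across the diagonal $t=x$, the two branches of (\ref{eq:it2}) being arranged precisely so that this jump equals $1$ and produces $\int_{0}^{x}f\,d\mu$; one then applies $\nabla_{\mu}$ via (\ref{eq:ca3}), and the boundary contributions at $0$ and $1$ must be shown to vanish on account of the boundary condition defining $dom\left(K_{F}\right)$. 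This jump-and-boundary bookkeeping, carried out with the integration-by-parts identity of \lemref{T1} (equivalently, the honest justification that $W_{\mu}$ intertwines the two realizations and that $V$ lands in $dom\left(K_{F}\right)$), is the only substantive computation; once it is in place, $V\,K_{F}\psi=\psi-\langle\psi,1\rangle_{L^{2}\left(\mu\right)}\,1/\mu(J)$ on the domain follows symmetrically, and (\ref{eq:it3}) is immediate.
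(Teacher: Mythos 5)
Your selfadjointness and compactness steps coincide with the paper's (kernel symmetry, plus the partition/finite-rank argument of \corref{cpt1}). The genuine gap is your central claim that $V=K_{F}^{-1}$ on the orthogonal complement of the constants, so that $K_{F}\psi=c\psi$ would give $V\psi=c^{-1}\psi$. This is false, and it already fails in the classical case $\mu=\lambda$, $g(x)=x$: there $K_{F}=d^{2}/dx^{2}$ with \emph{Neumann} conditions, while $H(x,t)=\min(x,t)\left(\max(x,t)-1\right)$ is the Green's kernel of the \emph{Dirichlet} realization; for $\psi=\cos(\pi x)$, $c=-\pi^{2}$, one computes $V\psi=-\pi^{-2}\left(\cos(\pi x)+2x-1\right)\neq c^{-1}\psi$. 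For singular $\mu$ the claim fails even at the level of domains: since the $x$-dependence of $H$ enters only through $g(x)$, the Stieltjes product rule gives
\begin{equation*}
d\left(Vh\right)(x)=\left(\int_{0}^{x}g\,h\,dt+\int_{x}^{1}\left(g-1\right)h\,dt\right)d\mu(x),
\end{equation*}
so $Vh$ is $\mu$-absolutely continuous rather than Lebesgue absolutely continuous; hence $Vh\notin dom\left(K_{F}\right)$, and $K_{F}Vh$ (which requires applying $d/dx$ \emph{first}) is undefined. Your jump-across-the-diagonal computation of $\frac{d}{dx}\left(Vf\right)$ would therefore break down: the diagonal jump produces a density against $d\mu$, not against $dx$. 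What $V$ actually inverts is the \emph{opposite-order} operator $\frac{d}{dx}\frac{d}{d\mu}$ with Dirichlet conditions, since $\nabla_{\mu}\left(Vh\right)=\int_{0}^{x}gh\,dt+\int_{x}^{1}\left(g-1\right)h\,dt$ and then $\frac{d}{dx}\nabla_{\mu}\left(Vh\right)=h$.

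The missing idea --- and the heart of the paper's proof --- is that the correspondence (\ref{eq:it3}) is implemented by \emph{differentiation}, not by the identity map: if $K_{F}\psi=c\psi$, then $h=\psi'$ satisfies the Dirichlet conditions $h(0)=h(1)=0$ (these are exactly the Neumann conditions on $\psi$) and $Vh=c^{-1}h$. Concretely, the paper starts from $f'(x)=c\int_{0}^{x}f\,d\mu$ (with $f=c\psi$), performs Stieltjes integration by parts against $dg=d\mu$, and substitutes the boundary condition $f'(1)=0$ to arrive at $f'(x)=c\int_{0}^{1}H(x,t)f'(t)\,dt$. Structurally, this is the assertion that $BA$ and $AB$, with $A=d/dx$ and $B=\nabla_{\mu}$, have the same nonzero spectrum --- the same $T^{*}T$ versus $TT^{*}$ duality developed in \secref{DP} --- with intertwining map $\psi\mapsto A\psi=\psi'$. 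Your $W_{\mu}$-conjugation observation is correct as far as it goes ($H(x,t)=\widetilde{G}\left(g(x),g(t)\right)$ with $\widetilde{G}$ the classical Dirichlet kernel, though the identity $V=W_{\mu}G_{0}W_{\mu}^{*}$ requires reading (\ref{eq:it1}) with $d\mu(t)$ in place of $dt$), but it identifies $V$ as conjugate to the inverse \emph{Dirichlet} operator; to connect this to $sp\left(K_{F}\right)$ for the Neumann realization you still need the differentiation step, which is precisely what your proposal omits.
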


\begin{proof}
Selfadjointness of $V$ follows from (\ref{eq:it2}), and the argument
for compactness is the same as that of \corref{cpt1}. 

Let $\psi\left(x\right)=\psi\left(0\right)+\int_{0}^{x}\left(\int_{0}^{t}fd\mu\right)dt\in dom\left(K_{F}\right)$,
where $f\in L^{2}\left(\mu\right)$, so that $K_{F}\psi\left(x\right)=f\left(x\right)$.
Assume that 
\[
K_{F}\psi=c\psi,
\]
for some constant $c<0$. Then, 
\begin{eqnarray*}
K_{F}\psi & = & c\psi\\
 & \Updownarrow\\
f\left(x\right) & = & c\int_{0}^{x}\left(\int_{0}^{t}fd\mu\right)dt\\
 & \Downarrow\\
f'\left(x\right) & = & c\int_{0}^{x}fd\mu.
\end{eqnarray*}
The last line can be written as follows: 

Set $g\left(x\right):=\mu\left(\left[0,x\right]\right)$, then 
\begin{align}
f'\left(x\right) & =c\int_{0}^{x}fd\mu\nonumber \\
 & =c\int_{0}^{x}fdg\nonumber \\
 & =c\left(f\left(x\right)g\left(x\right)-\int_{0}^{x}g\left(t\right)f'\left(t\right)dt\right)\nonumber \\
 & =c\left(\left(f\left(0\right)+\int_{0}^{x}f'\left(s\right)ds\right)g\left(x\right)-\int_{0}^{x}g\left(t\right)f'\left(t\right)dt\right)\nonumber \\
 & =cf\left(0\right)g\left(x\right)+c\int_{0}^{x}\left(g\left(x\right)-g\left(t\right)\right)f'\left(t\right)dt.\label{eq:temp}
\end{align}
Since $f'=c\psi'$, the boundary condition $\psi'\left(1\right)=0$
and (\ref{eq:temp}) imply that 
\begin{equation}
cf\left(0\right)+c\int_{0}^{1}\left(1-g\left(t\right)\right)f'\left(t\right)dt=0.\label{eq:it4}
\end{equation}
Substitute (\ref{eq:it4}) into (\ref{eq:temp}), then 
\begin{align*}
f'\left(x\right) & =c\int_{0}^{x}\left(g\left(x\right)-g\left(t\right)\right)f'\left(t\right)dt+cf\left(0\right)g\left(x\right)\\
 & =c\left(\int_{0}^{x}\left(g\left(x\right)-g\left(t\right)\right)f'\left(t\right)dt-g\left(x\right)\int_{0}^{1}\left(1-g\left(t\right)\right)f'\left(t\right)dt\right)\\
 & =c\left(\int_{0}^{x}\left(g\left(x\right)-1\right)g\left(t\right)f'\left(t\right)dt+\int_{x}^{1}g\left(x\right)\left(g\left(t\right)-1\right)f'\left(t\right)dt\right)\\
 & =c\int_{0}^{1}H\left(x,t\right)f'\left(t\right)dt
\end{align*}
with $H$ as defined in (\ref{eq:it2}), and the assertion (\ref{eq:it3})
follows.
\end{proof}

\subsection{Path-space and Markov transition}

It is also of general interest to relate $K_{F}$ directly to the
generator of the diffusion semigroup. Notation: $\left(\Omega,\mathscr{F},\mathbb{P}\right)$,
$\Omega$ path space, $\mathscr{F}$ cylinder $\sigma$-algebra, $\mathbb{P}$
probability measure, $K_{\mu}\left(A\cap B\right)=\mu\left(A\cap B\right)$,
\begin{equation}
W_{t}^{\left(\mu\right)}\left(\omega\right)=\omega\left(t\right),\quad\forall\omega\in\Omega.\label{eq:Cc-3}
\end{equation}
\begin{align}
\mathbb{E}\left(\cdot\right) & =\int_{\Omega}\cdots d\mathbb{P}\quad\left(x\in J\right)\\
\mathbb{E}_{x}\left(\cdot\right) & =\int_{\Omega_{x}}\cdots d\mathbb{P},\quad\omega\in\Omega_{x}=\left\{ \omega,\omega\left(0\right)=x\right\} =\left\{ \omega,W_{0}^{\left(\mu\right)}\omega=x\right\} .\label{eq:CC-7}
\end{align}
In the discussion below we omit $\mu$ in $W^{\left(\mu\right)}$
to simplify notation.
\begin{equation}
\left(S_{t}\varphi\right)\left(x\right):=\mathbb{E}_{x}\left(\varphi\circ W_{t}^{\left(\mu\right)}\right).\label{eq:CC-8}
\end{equation}

We showed that $K_{F}$ is the generator of $S_{t}$ in (\ref{eq:CC-8}).
It is known that $S_{t}$ is a semigroup $\left(t\in\mathbb{R}_{+}\right)$
also $S_{0}=I$, so 
\[
S_{s}S_{t}=S_{s+t},\quad\forall s,t\in\mathbb{R}_{+}.
\]
For semigroups and generators in the Hilbert space framework, see
e.g., \cite{MR2849757,MR0231220,MR0133686}, and for diffusion semigroups,
we refer to e.g., \cite{MR126625,MR4243820,MR3571410}. Also see \cite[ch 3]{MR613983,MR1242198}.
\begin{lem}
\label{lem:Scont}$S_{t}$ is selfadjoint in $L^{2}\left(\mu\right)$
$\forall t\in\mathbb{R}_{+}$, so 
\begin{equation}
\int\left(S_{t}\varphi\right)\left(x\right)\psi\left(x\right)\mu\left(dx\right)=\int\varphi\left(x\right)\left(S_{t}\psi\right)\left(x\right)\mu\left(dx\right),\quad\forall\varphi,\psi\in L^{2}\left(\mu\right),\label{eq:Cc-8}
\end{equation}
also 
\begin{equation}
\int\left|S_{t}\varphi\right|^{2}d\mu\leq\int\left|\varphi\right|^{2}d\mu.\label{eq:Cc-9}
\end{equation}
\end{lem}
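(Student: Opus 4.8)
The plan is to identify the probabilistically defined Markov semigroup $S_{t}$ with the spectral-calculus semigroup of a \emph{negative semidefinite selfadjoint} realization of $\tfrac12 K_{F}$, after which both assertions fall out of the spectral theorem. By \lemref{C9} the function $u(t,x)=(S_{t}\varphi)(x)$ solves the generalized heat equation $\partial_{t}u=\tfrac12 K_{F}u$ with $u(0,\cdot)=\varphi$ (see (\ref{eq:C2-8})); hence $\{S_{t}\}_{t\geq0}$ is the strongly continuous semigroup on $L^{2}(\mu)$ whose infinitesimal generator is $\tfrac12 K_{F}$. On the other hand, the dual-pair machinery of \secref{DP} — the von Neumann factorization \thmref{vN} applied to $T_{\mu}$, as recorded around (\ref{eq:Cc-2}) — produces a distinguished selfadjoint realization $A:=\tfrac12 K_{F}$, expressed through $T_{\mu}T_{\mu}^{*}$. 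Because the quadratic form satisfies $QF(K_{F})(\varphi,\varphi)=-\int|\varphi'|^{2}\,d\lambda\leq0$, this $A$ is negative semidefinite, i.e.\ $\mathrm{sp}(A)\subseteq(-\infty,0]$.

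First I would note strong continuity of $S_{t}$ on $L^{2}(\mu)$ (immediate from the definition (\ref{eq:CC-8}) and pathwise continuity of the Gaussian process $W^{(\mu)}$), so that by uniqueness of a semigroup with a prescribed generator one may write $S_{t}=e^{tA}$ in the functional calculus of the selfadjoint operator $A$. Selfadjointness (\ref{eq:Cc-8}) is then automatic: for real $t\geq0$ the spectral theorem gives $S_{t}^{*}=e^{tA^{*}}=e^{tA}=S_{t}$, which is precisely the stated symmetry $\int(S_{t}\varphi)\psi\,d\mu=\int\varphi\,(S_{t}\psi)\,d\mu$.

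For the contraction estimate (\ref{eq:Cc-9}) the spectral route is cleanest: since $\mathrm{sp}(A)\subseteq(-\infty,0]$, the spectral mapping theorem yields $\mathrm{sp}(e^{tA})\subseteq(0,1]$, whence $\|S_{t}\|_{L^{2}(\mu)\to L^{2}(\mu)}\leq1$ and therefore $\int|S_{t}\varphi|^{2}\,d\mu\leq\int|\varphi|^{2}\,d\mu$. A purely Markovian alternative, once (\ref{eq:Cc-8}) is in hand, is conditional Jensen applied to (\ref{eq:CC-8}), giving the pointwise bound $|(S_{t}\varphi)(x)|^{2}\leq(S_{t}|\varphi|^{2})(x)$, followed by $\int S_{t}(|\varphi|^{2})\,d\mu=\int|\varphi|^{2}\,(S_{t}1)\,d\mu=\int|\varphi|^{2}\,d\mu$ using the already-proved symmetry and conservativeness $S_{t}1=1$ of the diffusion.

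The main obstacle is the identification $S_{t}=e^{tA}$ for this \emph{particular} selfadjoint realization — equivalently, confirming that the Gaussian diffusion $W^{(\mu)}$ selects exactly the boundary realization furnished by the dual pair, and not some other selfadjoint extension of the minimal symmetric operator $\Delta_{\mu,0}$. This is the classical statement that for the Krein-Feller generator $\tfrac{d}{d\mu}\tfrac{d}{dx}$ the fixed measure $\mu$ is the reversible (symmetrizing) speed measure; one makes it precise either by matching generators on a common core in $L^{2}(\mu)$, or by establishing reversibility $\mu(dx)\,p_{t}(x,dy)=\mu(dy)\,p_{t}(x,dx)$ of the transition kernel, which itself encodes the symmetry of $QF(K_{F})$ coming from the Leibniz/integration-by-parts identity (\ref{eq:gr1}). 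Everything downstream of this identification is routine spectral theory.
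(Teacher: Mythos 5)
Your proposal has a genuine gap, and it sits exactly where you place it yourself: the identification $S_{t}=e^{tA}$ with $A$ the specific selfadjoint realization coming from the dual pair. Everything else in your argument is downstream of that identification, so the lemma has been reduced to an unproven claim that is at least as strong as the lemma itself. Indeed, of the two ways you suggest for closing it, the second --- reversibility $\mu(dx)\,p_{t}(x,dy)=\mu(dy)\,p_{t}(y,dx)$ of the transition kernel --- is literally equivalent to (\ref{eq:Cc-8}) (integrate both sides against $\varphi(x)\psi(y)$), so that route is circular. The first --- matching generators on a common core --- is not circular, but it is where all the analytic work lives: \lemref{C9} identifies the generator of $S_{t}$ only as the \emph{formal} operator $\tfrac{1}{2}K_{F}$, and which selfadjoint extension (Neumann, Dirichlet, or the dual-pair realization built from $T_{\mu}T_{\mu}^{*}$) the Gaussian diffusion actually selects is a statement about the boundary behavior of the process; nothing in the proposal establishes it, e.g., via a Dynkin-formula argument showing that the chosen selfadjoint operator is contained in the probabilistic generator (whence equality, since a semigroup generator admits no proper extension that is again a generator).

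The paper's proof avoids this issue entirely, and that is precisely its point: it never needs to know which selfadjoint extension the diffusion selects, only that $K_{F}$ acts \emph{symmetrically} on the semigroup orbits. Fixing $t$ and differentiating the interpolation $s\mapsto\langle S_{t-s}\varphi,S_{s}\psi\rangle_{L^{2}(\mu)}$, the generator property from \lemref{C9} gives
\[
\frac{d}{ds}\left\langle S_{t-s}\varphi,S_{s}\psi\right\rangle _{L^{2}\left(\mu\right)}=-\left\langle K_{F}S_{t-s}\varphi,S_{s}\psi\right\rangle +\left\langle S_{t-s}\varphi,K_{F}S_{s}\psi\right\rangle =0,
\]
so this function is constant on $\left[0,t\right]$, and equating its values at $s=0$ and $s=t$ yields (\ref{eq:Cc-8}) directly. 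Symmetry of the operator is something one can check by integration by parts (cf. (\ref{eq:gr1})); selfadjointness of a particular extension is not, and that is the essential difference between the two arguments. Two points in your favor: once (\ref{eq:Cc-8}) is granted, your Jensen-plus-conservativeness derivation of the contraction bound (\ref{eq:Cc-9}) is sound (on $J=\left[0,1\right]$ with $\mu$ finite one has $1\in L^{2}(\mu)$ and $S_{t}1=1$ trivially from (\ref{eq:CC-8})), and it is more explicit than the paper, which defers (\ref{eq:Cc-9}) to the diffusion-semigroup literature.
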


\begin{proof}
The proof of the properties (\ref{eq:Cc-8})--(\ref{eq:Cc-9}) is
contained in the literature of diffusion semigroups. But the following
proof sketch for (\ref{eq:Cc-8}) is new: 

Fix $t>0$ and any $0\leq s\leq t$, then
\begin{equation}
\frac{d}{ds}\int\left(S_{t-s}\varphi\right)\left(x\right)\left(S_{s}\psi\right)\left(x\right)\mu\left(dx\right)\equiv0\label{eq:Cc-10}
\end{equation}
so $s\rightarrow\int\left(S_{t-s}\varphi\right)\left(x\right)\left(S_{s}\psi\right)\left(x\right)\mu\left(dx\right)$
is constant value at $s=0=$ value at $s=t$, and (\ref{eq:Cc-8})
follows.

Proof of (\ref{eq:Cc-10}).
\begin{align*}
\text{LHS}_{\left(\ref{eq:Cc-10}\right)} & =\frac{d}{ds}\left\langle S_{t-s}\varphi,S_{s}\psi\right\rangle _{L^{2}\left(\mu\right)}\\
 & =-\left\langle K_{F}S_{t-s}\varphi,S_{s}\psi\right\rangle _{L^{2}\left(\mu\right)}+\left\langle S_{t-s}\varphi,K_{F}S_{s}\psi\right\rangle _{L^{2}\left(\mu\right)}=0
\end{align*}
since $K_{F}$ is symmetric w.r.t. $L^{2}\left(\mu\right)$.
\end{proof}

\subsection{The conditioning $W_{0}^{\left(\mu\right)}=x$}

For our considerations in (\ref{eq:CC-7}) and (\ref{eq:CC-8}) we
used the notation $\mathbb{E}_{x}$ and $\Omega_{x}$ with reference
to conditioning paths $\omega$ which ``start'' at $x$, so $\omega\left(0\right)=x$.
The justification is as follows. We have selected the sample space
$\Omega$ to be $\prod_{[0,\infty)}\mathbb{R}$ (Cartesian product),
and functions $\omega:\mathbb{R}_{\geq0}\rightarrow\mathbb{R}$ (infinitely
many ``paths''.) (It is known that the continuous functions will
have full measure relative to $\left(\Omega,\mathscr{C},\mathbb{P}\right)$
where $\mathscr{C}=$ the usual cylinder $\sigma$-algebra of subsets
of $\Omega$.) Here
\begin{equation}
\Omega_{x}:=\left\{ \omega\in\Omega:\omega\left(0\right)=x\right\} ,\;\text{and}\label{eq:CD-1}
\end{equation}
$\mathbb{P}$ denotes the probability measure on $\left(\Omega,\mathscr{C}\right)$,
such that 
\begin{equation}
\mathbb{E}\left(\cdot\cdot\right)=\int_{\Omega}\cdot\cdot d\mathbb{P},\;\text{and}\label{eq:CD-2}
\end{equation}
\begin{equation}
\mathbb{E}\left(W_{A}^{\left(\mu\right)}\right)=0,\quad\mathbb{E}\left(W_{A}^{\left(\mu\right)}W_{B}^{\left(\mu\right)}\right)=\mu\left(A\cap B\right),\label{eq:CD3}
\end{equation}
for all Borel sets $A,B\in\mathscr{C}$. 

From the construction the projection $\pi_{0}:\Omega\rightarrow\Omega$,
$\pi_{0}\left(\omega\right)=\omega\left(0\right)$, satisfies 
\begin{equation}
\mathbb{P}\circ\pi_{0}^{-1}\ll\mu.\label{eq:CD4}
\end{equation}
Now consider the Radon-Nikodym derivative: 
\begin{equation}
\frac{d\mathbb{P}\circ\pi_{0}^{-1}}{d\mu}=\mathbb{E}_{x},\label{eq:CD6}
\end{equation}
or equivalently, for all random variables $F$ on $\left(\Omega,\mathscr{C}\right)$
we have:
\begin{equation}
\mathbb{E}\left(F\right)=\int_{\mathbb{R}}\mathbb{E}_{x}\left(F\right)\mu\left(dx\right).\label{eq:CD7}
\end{equation}

\subsection{The $\mu$-heat equation}

We assume a fixed non atomic Borel measure $\mu$ supported in an
interval $J=\left[0,\alpha\right]$ where $\alpha$ may be finite
or infinite. We shall denote by $W^{\left(\mu\right)}$ the corresponding
\emph{generalized Brownian motion}, i.e., determined by: $W^{\left(\mu\right)}$
is Gaussian, real-valued 
\begin{equation}
\mathbb{E}(W^{\left(\mu\right)})=0,\quad\mathbb{E}(W_{A}^{\left(\mu\right)}W_{B}^{\left(\mu\right)})=\mu\left(A\cap B\right)\label{eq:C3-1}
\end{equation}
for all Borel sets $A,B\subset J$. 

For every continuous function $\varphi$ on $\mathbb{R}$, we consider
$\varphi(W_{A}^{\left(\mu\right)})=\varphi\circ W_{A}^{\left(\mu\right)}$.
If $A=\left[s,t\right]$ we make a choice of $W_{t}^{\left(\mu\right)}$
such that $W_{\left[s,t\right]}^{\left(\mu\right)}=W_{t}^{\left(\mu\right)}-W_{s}^{\left(\mu\right)}$,
and we set 
\begin{equation}
S_{t}^{\left(\mu\right)}\varphi\left(x\right)=\mathbb{E}_{x}\left(\varphi\circ W_{t}^{\left(\mu\right)}\right),\quad t\in\mathbb{R}_{+}.\label{eq:C3-2}
\end{equation}
Since, by (\ref{eq:C3-1}), the process $W^{\left(\mu\right)}$ has
independent increments, it follows that $S_{t}^{\left(\mu\right)}$
is a Markov semigroup.

When $\varphi$ is given, we set 
\begin{equation}
u\left(t,x\right)=\left(S_{t}^{\left(\mu\right)}\varphi\right)\left(x\right)=\mathbb{E}_{x}\left(\varphi\left(W_{t}^{\left(\mu\right)}\right)\right)\label{eq:C3-3}
\end{equation}
where the conditional expectation $\mathbb{E}_{x}$ corresponds to
$W_{0}^{\left(\mu\right)}=x$. Then the probability space $\Omega$
consists of continuous $\omega$, and 
\begin{equation}
W_{t}^{\left(\mu\right)}\left(\omega\right)=\omega\left(t\right),\quad0\leq t\leq\infty.\label{eq:C3-4}
\end{equation}
We then get the boundary condition $u\left(0,x\right)=\varphi\left(x\right)$
directly from (\ref{eq:C3-3}).

We shall further consider the operator $d/d\mu$ acting in the $t$-variable.
For convenience we shall write $\nabla_{t}^{\left(\mu\right)}$. 

We now turn to the corresponding diffusion equation:
\begin{thm}
\label{thm:Cc16}Let $\mu$, $W^{\left(\mu\right)}$, $S_{t}^{\left(\mu\right)}$,
and $u\left(t,x\right)$ be as specified. We then have 
\begin{equation}
\nabla_{t}^{\left(\mu\right)}u=\frac{1}{2}\frac{\partial^{2}}{\partial x^{2}}u.\label{eq:C3-5}
\end{equation}
\end{thm}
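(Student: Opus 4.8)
The plan is to combine the Ito-lemma for the generalized Brownian motion $W^{\left(\mu\right)}$ with the defining property (\ref{eq:Dmax}) of the $\mu$-derivative $\nabla_{t}^{\left(\mu\right)}$ taken in the $t$-variable. First I would apply Ito's lemma to $\varphi\left(W_{t}^{\left(\mu\right)}\right)$ for $\varphi\in C^{2}$, using the quadratic-variation identity $\left(dW_{t}^{\left(\mu\right)}\right)^{2}=\mu\left(dt\right)$ from (\ref{eq:C2-10}); this yields
\[
d\varphi\left(W_{t}^{\left(\mu\right)}\right)=\varphi'\left(W_{t}^{\left(\mu\right)}\right)dW_{t}^{\left(\mu\right)}+\tfrac{1}{2}\varphi''\left(W_{t}^{\left(\mu\right)}\right)\mu\left(dt\right).
\]
Integrating from $0$ to $t$ and applying the conditional expectation $\mathbb{E}_{x}$ of (\ref{eq:C3-3}), the stochastic-integral term is a mean-zero martingale and drops out, leaving
\[
u\left(t,x\right)=\varphi\left(x\right)+\tfrac{1}{2}\int_{0}^{t}\mathbb{E}_{x}\!\left(\varphi''\left(W_{s}^{\left(\mu\right)}\right)\right)\mu\left(ds\right).
\]

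Next I would identify the integrand as $\tfrac{1}{2}\partial_{x}^{2}u\left(s,x\right)$. Under $\mathbb{E}_{x}$ the paths start at $x$ and $W_{s}^{\left(\mu\right)}=x+W_{\left[0,s\right]}^{\left(\mu\right)}$, where by (\ref{eq:C3-1}) the increment $W_{\left[0,s\right]}^{\left(\mu\right)}$ is a centered Gaussian of variance $\mu\left(\left[0,s\right]\right)=g\left(s\right)$ whose law does not depend on $x$. Hence $u\left(s,\cdot\right)$ is the Gaussian convolution $u\left(s,x\right)=\int_{\mathbb{R}}\varphi\left(x+w\right)p_{g\left(s\right)}\left(w\right)dw$, with $p_{\sigma^{2}}$ the centered heat kernel of variance $\sigma^{2}$. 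Differentiating twice in $x$ under the integral sign (legitimate for $\varphi\in C^{2}$ with controlled growth) and transferring both derivatives onto $\varphi$ gives $\partial_{x}^{2}u\left(s,x\right)=\int_{\mathbb{R}}\varphi''\left(x+w\right)p_{g\left(s\right)}\left(w\right)dw=\mathbb{E}_{x}\!\left(\varphi''\left(W_{s}^{\left(\mu\right)}\right)\right)$.

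Substituting this identity into the integral representation yields
\[
u\left(t,x\right)=\varphi\left(x\right)+\int_{0}^{t}\left(\tfrac{1}{2}\partial_{x}^{2}u\left(s,x\right)\right)\mu\left(ds\right),
\]
which is exactly the representation (\ref{eq:Dmax}) of a $\mathscr{D}_{1}$-function in the variable $t$, whose $\mu$-Radon-Nikodym derivative equals $\tfrac{1}{2}\partial_{x}^{2}u$. By the definition (\ref{eq:C2}) of $\nabla_{\mu}$ --- equivalently by \lemref{C1}, eq. (\ref{eq:ca5}) --- applying $\nabla_{t}^{\left(\mu\right)}$ to both sides gives $\nabla_{t}^{\left(\mu\right)}u=\tfrac{1}{2}\partial_{x}^{2}u$, which is (\ref{eq:C3-5}). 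This parallels \lemref{C9}, with the roles of the two variables exchanged: the $\mu$-derivative now sits on the $t$-side and the ordinary Laplacian $\partial_{x}^{2}$ on the $x$-side.

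I expect the main obstacle to be the rigorous justification of the two interchanges, rather than the algebra: (i) the vanishing of the expectation of the Ito integral $\int_{0}^{t}\varphi'\left(W_{s}^{\left(\mu\right)}\right)dW_{s}^{\left(\mu\right)}$ for the time-changed (and generally singular) quadratic-variation clock $g\left(s\right)=\mu\left(\left[0,s\right]\right)$; and (ii) differentiation under the integral sign together with the Fubini-type step needed to pass $\partial_{x}^{2}$ through the $\mu$-integral in $s$. Both require a standing regularity/integrability hypothesis on $\varphi$ (for instance $\varphi\in C^{2}$ with $\varphi,\varphi',\varphi''$ bounded, or of Schwartz type), and one must verify that $s\mapsto\tfrac{1}{2}\partial_{x}^{2}u\left(s,x\right)$ lies in $L^{2}\left(\mu\right)$, so that the $\mu$-antiderivative representation and hence the action of $\nabla_{t}^{\left(\mu\right)}$ are well-defined in the sense of (\ref{eq:Dmax}).
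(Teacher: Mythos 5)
Your proposal follows the paper's own proof essentially step for step: Ito's lemma for $W^{\left(\mu\right)}$ with the quadratic-variation identity $\left(dW_{t}^{\left(\mu\right)}\right)^{2}=\mu\left(dt\right)$, passage to the integral form, application of $\mathbb{E}_{x}$ so that the Ito-integral term vanishes, and finally the definition of the $\mu$-derivative in the $t$-variable (\lemref{C1}, eq. (\ref{eq:Dmax})) to read off (\ref{eq:C3-5}). Your Gaussian-convolution argument simply makes explicit the identification $\mathbb{E}_{x}\left(\varphi''\left(W_{s}^{\left(\mu\right)}\right)\right)=\partial_{x}^{2}u\left(s,x\right)$, a step the paper uses silently in passing from (\ref{eq:C3-8}) to (\ref{eq:C3-9}), and your closing technical caveats match the paper's own reliance on the cited stochastic-calculus literature.
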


\begin{proof}
Without loss of generality we may assume $\varphi\in C^{2}$. Then
by Ito's lemma, we get 
\begin{equation}
d\varphi\left(W_{t}^{\left(\mu\right)}\right)=\varphi'\left(W_{t}^{\left(\mu\right)}\right)dW_{t}^{\left(\mu\right)}+\frac{1}{2}\varphi''\left(W_{t}^{\left(\mu\right)}\right)d\mu\label{eq:C3-6}
\end{equation}
where $\varphi''=\left(d/dx\right)^{2}\varphi$. For the derivation
of (\ref{eq:C3-6}), we refer to the cited papers (e.g., \cite{MR0345224,MR3571410,MR1242198,MR3687240}),
we also use the familiar quadratic variation formula 
\begin{equation}
QV=\left(dW_{t}^{\left(\mu\right)}\right)^{2}=\mu\left(dt\right).\label{eq:C3-7}
\end{equation}
Note that (\ref{eq:C3-7}) holds since $\mu$ was assumed non-atomic.
Further note that (\ref{eq:C3-6}) refers to \emph{Ito-differentials}.
In general (\ref{eq:C3-6}) is equivalent to the corresponding integral
formula version: 
\begin{equation}
\varphi\left(W_{t}^{\left(\mu\right)}\right)-\varphi\left(W_{0}^{\left(\mu\right)}\right)=\underset{\text{Ito-integral}}{\underbrace{\int_{0}^{t}\varphi'\left(W_{s}^{\left(\mu\right)}\right)dW_{s}^{\left(\mu\right)}}}+\frac{1}{2}\int_{0}^{t}\varphi''\left(W_{s}^{\left(\mu\right)}\right)\mu\left(ds\right).
\end{equation}

Now apply the expectation $\mathbb{E}_{x}$ to both sides in (\ref{eq:C3-7})
we arrive at 
\begin{equation}
\mathbb{E}_{x}\left(\varphi\left(W_{t}^{\left(\mu\right)}\right)\right)-\varphi\left(x\right)=\frac{1}{2}\int_{0}^{t}\mathbb{E}_{x}\left(\varphi''\left(W_{s}^{\left(\mu\right)}\right)\right)\mu\left(ds\right),\label{eq:C3-8}
\end{equation}
or equivalently: 
\begin{equation}
S_{t}^{\left(\mu\right)}\varphi\left(x\right)-\varphi\left(x\right)=\frac{1}{2}\int_{0}^{t}\left(\frac{\partial^{2}}{\partial x^{2}}u\right)\left(s,x\right)\mu\left(ds\right).\label{eq:C3-9}
\end{equation}
From the definition of the operator $T_{\left(\text{in \ensuremath{t}}\right)}^{\left(\mu\right)}=\nabla_{t}^{\left(\mu\right)}$
(see \lemref{C1}), we therefore get 
\begin{equation}
\nabla_{t}^{\left(\mu\right)}u=\frac{1}{2}\frac{\partial^{2}}{\partial x^{2}}u,\label{eq:C3-10}
\end{equation}
which is the desired conclusion (\ref{eq:C3-5}) in the Theorem.
\end{proof}
\begin{prop}
Consider the heat equation
\begin{equation}
\frac{\partial}{\partial t}u\left(t,x\right)=K_{F}u\left(t,x\right),\quad\left(t,x\right)\in\mathbb{R}_{+}\times\left[0,1\right],\label{eq:h1}
\end{equation}
with $K_{F}=\frac{\partial}{\partial\mu}\frac{\partial}{\partial x}$
given a selfadjoint realization in $L^{2}\left(\left[0,1\right],\mu\right)$.
Then the corresponding solution to (\ref{eq:h1}) has the following
form: 
\[
u\left(t,x\right)=\sum_{1}^{\infty}e^{-t\lambda_{n}}k_{n}\left(x\right),
\]
where $\lambda_{n}$ are the eigenvalues of $K_{F}$ and $k_{n}$
are the corresponding eigenfunctions. 
\end{prop}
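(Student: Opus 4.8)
The plan is to reduce the Cauchy problem (\ref{eq:h1}) to the abstract heat semigroup generated by the selfadjoint operator $K_{F}$, and then to diagonalize that semigroup against the eigenbasis already produced by the spectral theorem for operators with compact resolvent. First I would invoke \corref{kek} (equivalently \corref{cpt1}): the integral operator $V$ is compact and selfadjoint, and its nonzero spectrum is in bijection with $sp\left(K_{F}\right)$ via $c\mapsto c^{-1}$. By the spectral theorem for compact selfadjoint operators, $L^{2}\left(\mu\right)$ then admits an orthonormal basis $\left\{ k_{n}\right\} _{n\geq1}$ of eigenfunctions of $V$, hence of $K_{F}$; write $K_{F}k_{n}=-\lambda_{n}k_{n}$ with $\lambda_{n}\geq0$, the sign being forced by the quadratic-form bound $\left\langle \varphi,K_{F}\varphi\right\rangle _{L^{2}\left(\mu\right)}=-\left\Vert \varphi'\right\Vert _{L^{2}\left(\lambda\right)}^{2}\leq0$. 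Since $V$ is compact, the $\lambda_{n}$ are discrete with the only accumulation at $+\infty$, so the eigenvalues of $K_{F}$ accumulate only at $-\infty$.

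Next I would realize the solution operator as a semigroup. Because $K_{F}$ is selfadjoint and semibounded above, $-K_{F}$ is positive selfadjoint, and the spectral calculus furnishes the strongly continuous contraction semigroup $S_{t}=e^{tK_{F}}=\int_{sp\left(K_{F}\right)}e^{t\lambda}\,dE\left(\lambda\right)$; the function $u\left(t\right)=S_{t}\varphi$ is then the unique solution of the abstract Cauchy problem $u'\left(t\right)=K_{F}u\left(t\right)$, $u\left(0\right)=\varphi$, in the standard semigroup sense (see \cite{MR0231220}). Expanding the initial datum in the eigenbasis, $\varphi=\sum_{n}\left\langle \varphi,k_{n}\right\rangle _{L^{2}\left(\mu\right)}k_{n}$, and applying the functional calculus termwise yields
\begin{equation*}
u\left(t,x\right)=\left(S_{t}\varphi\right)\left(x\right)=\sum_{n=1}^{\infty}e^{-t\lambda_{n}}\left\langle \varphi,k_{n}\right\rangle _{L^{2}\left(\mu\right)}k_{n}\left(x\right),
\end{equation*}
which, after absorbing each Fourier coefficient into the eigenfunction $k_{n}$, is exactly the asserted form.

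Finally I would verify that this series genuinely solves (\ref{eq:h1}). For each fixed $t>0$ the weights $e^{-t\lambda_{n}}$ decay, and for $t\geq t_{0}>0$ they decay uniformly in $n$, which legitimates differentiating the series termwise in $t$ and applying $K_{F}$ under the summation; $L^{2}\left(\mu\right)$-convergence is automatic from Parseval, and the eigen-relation $K_{F}k_{n}=-\lambda_{n}k_{n}$ then gives $\partial_{t}u=K_{F}u$. The boundary data $u\left(0,x\right)=\varphi\left(x\right)$ is recovered by letting $t\downarrow0$ and using strong continuity of $S_{t}$.

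The genuinely delicate points are not the spectral decomposition itself, which is handed to us by \corref{kek}, but rather two matters of bookkeeping. One is pinning down \emph{which} selfadjoint realization the symbol $K_{F}$ denotes, since the exact eigenvalues $\lambda_{n}$ and eigenfunctions $k_{n}$ depend on the boundary condition (Dirichlet versus Neumann, cf. the domain displayed in \corref{kek}). The other, which I expect to be the main obstacle, is the careful justification of termwise differentiation together with the strong-continuity limit at $t=0$ when $\varphi$ is merely in $L^{2}\left(\mu\right)$ rather than in $dom\left(K_{F}\right)$; this is controlled by the analytic-semigroup smoothing estimate $\sup_{n}\lambda_{n}e^{-t\lambda_{n}}\leq\left(et\right)^{-1}$ for the positive operator $-K_{F}$, which forces the differentiated series into $L^{2}\left(\mu\right)$ for every $t>0$.
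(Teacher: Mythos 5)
Your proof is correct, but it takes a genuinely different route from the paper's. The paper argues by classical separation of variables: it substitutes the ansatz $u\left(t,x\right)=h\left(t\right)k\left(x\right)$ into the equation, obtains $\frac{h'}{h}\left(t\right)=\frac{\nabla_{x}^{\mu}k'\left(x\right)}{k\left(x\right)}=-\lambda$, and thereby reduces the claim to the eigenvalue problem $-\lambda k=\nabla_{x}^{\mu}k'$ (equivalently, its integrated form), deferring to \corref{kek} and \remref{sac} only for the choice of selfadjoint realization. That derivation is elementary and makes the eigenvalue ODE explicit, but it is formal: it shows only that separated solutions have the asserted shape, and it silently relies on completeness of the eigenfunction system to capture general solutions. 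Your argument supplies exactly what the paper leaves implicit: compactness of $V$ from \corref{kek} gives a complete orthonormal eigenbasis $\left\{ k_{n}\right\} $, the quadratic-form bound $\left\langle \varphi,K_{F}\varphi\right\rangle _{L^{2}\left(\mu\right)}=-\left\Vert \varphi'\right\Vert _{L^{2}\left(\lambda\right)}^{2}\leq0$ fixes the sign convention so that $e^{tK_{F}}$ is a contraction semigroup, and the smoothing bound $\sup_{n}\lambda_{n}e^{-t\lambda_{n}}\leq\left(et\right)^{-1}$ justifies termwise differentiation; you therefore obtain existence, uniqueness, and the expansion for arbitrary $L^{2}\left(\mu\right)$ initial data, not merely the form of separated solutions. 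One small point to tighten: the correspondence $c\in sp\left(K_{F}\right)\Longleftrightarrow c^{-1}\in sp\left(V\right)$ in \corref{kek} concerns nonzero spectrum only, so the zero eigenvalue of the Neumann realization (the constant functions) and the triviality of $\ker V$ need a one-line remark before you may assert that the eigenfunctions of $V$ form an orthonormal basis consisting entirely of eigenfunctions of $K_{F}$.
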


\begin{proof}
The argument is based on separation of the two variables $t$ and
$x$, and use of spectral data; but now with reference to $K_{F}$
and $\nabla_{\mu}$. 

For details about choices of selfadjoint realizations of $K_{F}$,
see \corref{kek}, as well as \remref{sac} below.

Details as follows: Set 
\begin{equation}
u\left(t,x\right)=h\left(t\right)k\left(x\right).\label{eq:h2}
\end{equation}
Substituting (\ref{eq:h2}) into (\ref{eq:h1}) leads to
\[
h'\left(t\right)k\left(x\right)=h\left(t\right)\nabla_{x}^{\mu}k'\left(x\right),
\]
so that 
\[
\frac{h'}{h}\left(t\right)=\frac{\nabla_{x}^{\mu}k'\left(x\right)}{k\left(x\right)}=\text{const}=-\lambda.
\]
Thus, $h\left(t\right)=\text{const}e^{-\lambda t}$, and $\lambda$
is specified by 
\begin{equation}
-\lambda k\left(x\right)=\nabla_{x}^{\mu}k'\left(x\right).\label{eq:h3}
\end{equation}
Note, the eigenvalue problem (\ref{eq:h3}) is equivalent to 
\begin{equation}
h'\left(t\right)\int_{0}^{x}k\left(y\right)\mu\left(dy\right)=h\left(t\right)\left(k'\left(x\right)-k'\left(0\right)\right).\label{eq:h4}
\end{equation}
\end{proof}
\begin{rem}
\label{rem:sac}Solutions to (\ref{eq:h3}) depend on choices of boundary
conditions, i.e., selfadjoint realizations of the Krein-Feller operator.
Two examples are included below: 
\begin{enumerate}
\item Dirichlet boundary. (For a related discussion, see also \corref{kek}
above.) Dirichlet conditions: $f\left(0\right)=f\left(1\right)=0$.
Specifically,
\begin{align*}
dom\left(K_{F}\right) & =\big\{ f\in L^{2}\left(\mu\right):f\left(x\right)=\int_{0}^{x}\left(f'\left(0\right)+\int_{0}^{y}\varphi\left(s\right)\mu\left(ds\right)\right)dy,\\
 & \quad\quad f\left(1\right)=0,\:\varphi\in L^{2}\left(\mu\right)\big\}.
\end{align*}
In this case, one has 
\[
\left(K_{F}^{-1}\varphi\right)\left(x\right)=\int_{0}^{1}K_{\text{Dirichlet}}\left(x,s\right)\varphi\left(s\right)\mu\left(ds\right),
\]
where 
\[
K_{\text{Dirichlet}}\left(x,s\right)=\begin{cases}
\left(x-1\right)s & s\leq x,\\
x\left(s-1\right) & s\geq x.
\end{cases}
\]
\item $f\left(0\right)=f'\left(1\right)=0$. That is, 
\begin{align*}
dom\left(K_{F}\right) & =\big\{ f\in L^{2}\left(\mu\right):f\left(x\right)=\int_{0}^{x}\left(f'\left(0\right)+\int_{0}^{y}\varphi\left(s\right)\mu\left(ds\right)\right)dy,\\
 & \quad\quad f'\left(1\right)=0,\:\varphi\in L^{2}\left(\mu\right)\big\}.
\end{align*}
Then, 
\[
\left(K_{F}^{-1}\varphi\right)\left(x\right)=-\int_{0}^{1}\min\left(x,s\right)\varphi\left(s\right)\mu\left(ds\right).
\]
\end{enumerate}
\end{rem}

\begin{rem}
Our analysis of $W^{\left(\mu\right)}$ and the associated semigroup
is related to what is often referred to as \textquotedblleft change
of time;\textquotedblright{} see e.g., \cite{MR3363697} and (\ref{eq:tc2})
in \lemref{tc}.
\end{rem}

\begin{rem}
Dym and McKean developed a version of Krein-Feller operators in a
context of what they call \textquotedblleft strings\textquotedblright ,
see e.g., \cite{MR0442564,MR0448523} and also \cite{MR0247667}.
In principle, there is the following dictionary: string = positive
measure $\mu$ on a finite interval. Reasoning: every positive measure
on an interval is a Stieltjes measure by a monotone function, say
$F$. In Dym \& McKean, the monotone function $F$ measures the accumulation
of mass as you move forward on the string, and $\mu=dF$ as a Stieltjes
measure. However, Dym \& McKean do not seem to distinguish their analysis
for the dichotomy: $\mu$ singular or not. Recall, for the $1/3$
Cantor measure, $\mu=dF$ where $F$ is the Devil\textquoteright s
staircase function; see \figref{devil}. 

The case when $\mu\ll$ Lebesgue is covered in many other places,
e.g., books and papers by Edward Nelson, e.g., \cite{MR1189386,MR0343816,MR0214150,MR161189}. 
\end{rem}

\begin{rem}[Summary of extension theory for unbounded operators]
There is a theory in the case of unbounded operators in Hilbert space,
see e.g., \cite{MR0231220,MR126625,MR2849757}. 

Here, we emphasize the correspondence between skew-adjoint operators,
and generators of unitary one-parameter groups. In the case when skew-adjoint
operators arise as operator extensions, then they are specified by
partial isometries. On the other hand, dissipative operators correspond
to generators of contraction semigroups; and dissipative extensions
are specified by partial contractions. 

Generators of unitary one-parameter groups are maximal skew-symmetric
extensions. Examples of maximal skew-symmetric extensions that might
not be skew-adjoint will be when one of the indices is 0, so the cases
$(0,m)$ or $(n,0)$. Generators of contraction semigroups are maximal
dissipative extensions (for details, see \cite{MR1009163}.) One can
have semigroup generators for the cases $(n,0)$. But there are other
semigroup generators.
\end{rem}

Consider the operator $\left(d/dx\right)^{2}$ in $L^{2}\left(\left[0,1\right]\right)$. 
\begin{itemize}
\item Two particular selfadjoint extensions: 
\begin{itemize}
\item Neumann: $f'\left(0\right)=f'\left(1\right)=0$
\item Dirichlet: $f\left(0\right)=f\left(1\right)=0$
\end{itemize}
\item Maximal dissipative extension (diffusion semigroups)
\end{itemize}
\begin{rem}[Diffusion paths for Brownian motion, and for generalized Brownian
motion]

Diffusion paths:
\[
\mathbb{E}_{x}\left(\varphi\left(W_{t}\right)\right)=\left(S_{t}\varphi\right)\left(x\right)
\]
where $W_{t}$ is the Brownian motion (and $W^{\left(\mu\right)}$
for the general case).

Two cases:
\begin{enumerate}
\item all sample paths $\omega$, $\omega\left(0\right)=x$
\item restricted sample paths; e.g., $\omega\left(t\right)\in\left[0,1\right]$
or $\omega\left(t\right)\in\left[-1,1\right]$. 
\end{enumerate}
\end{rem}

\begin{figure}
\includegraphics[width=0.35\columnwidth]{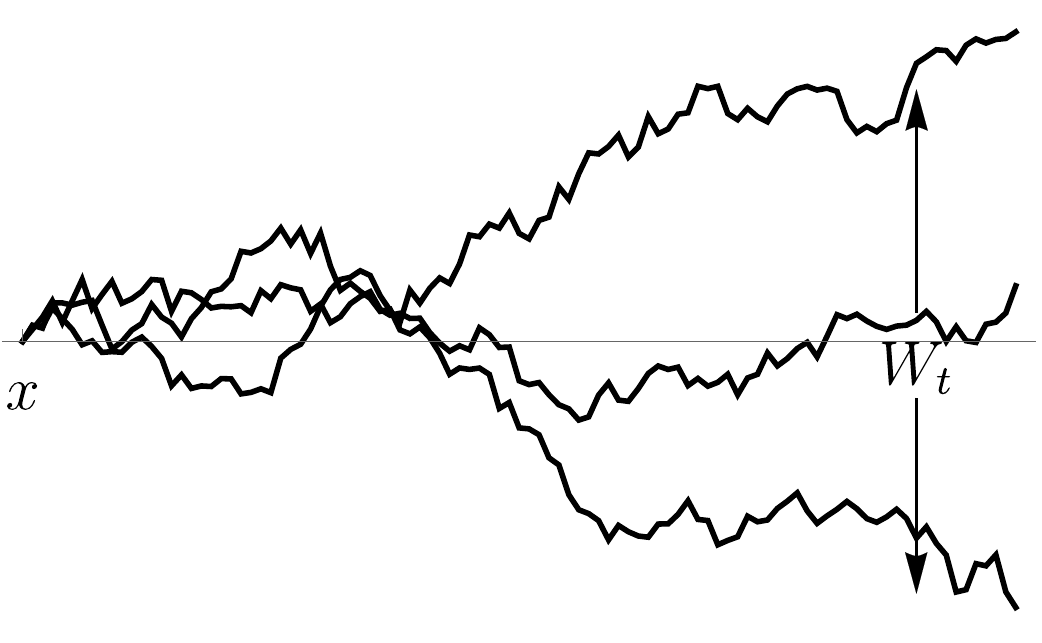}

\caption{}
\end{figure}

\begin{figure}
\includegraphics[width=0.35\columnwidth]{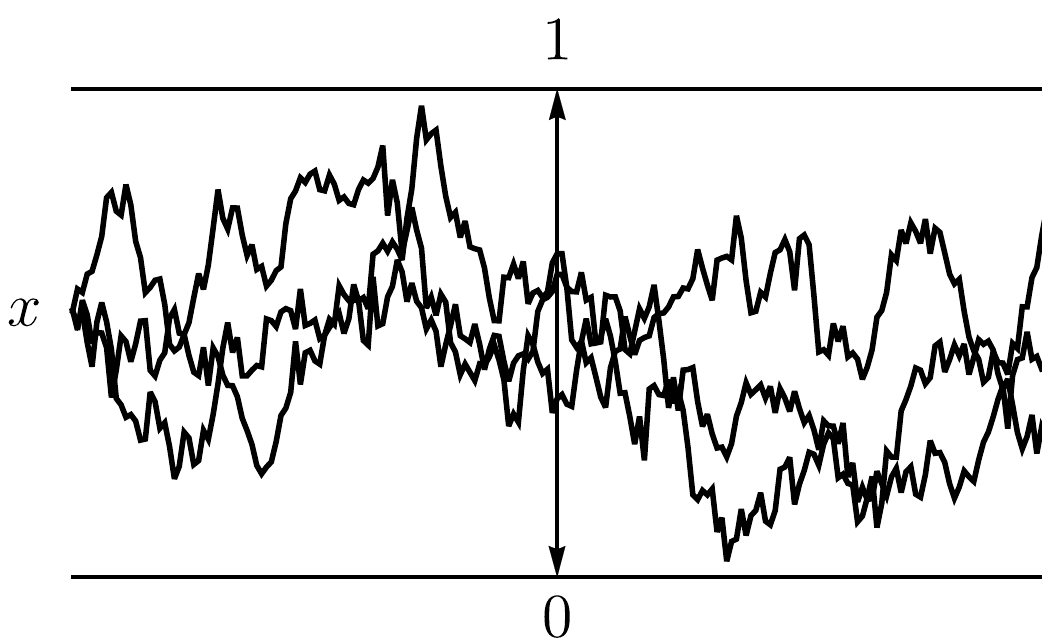}\qquad{}\includegraphics[width=0.35\columnwidth]{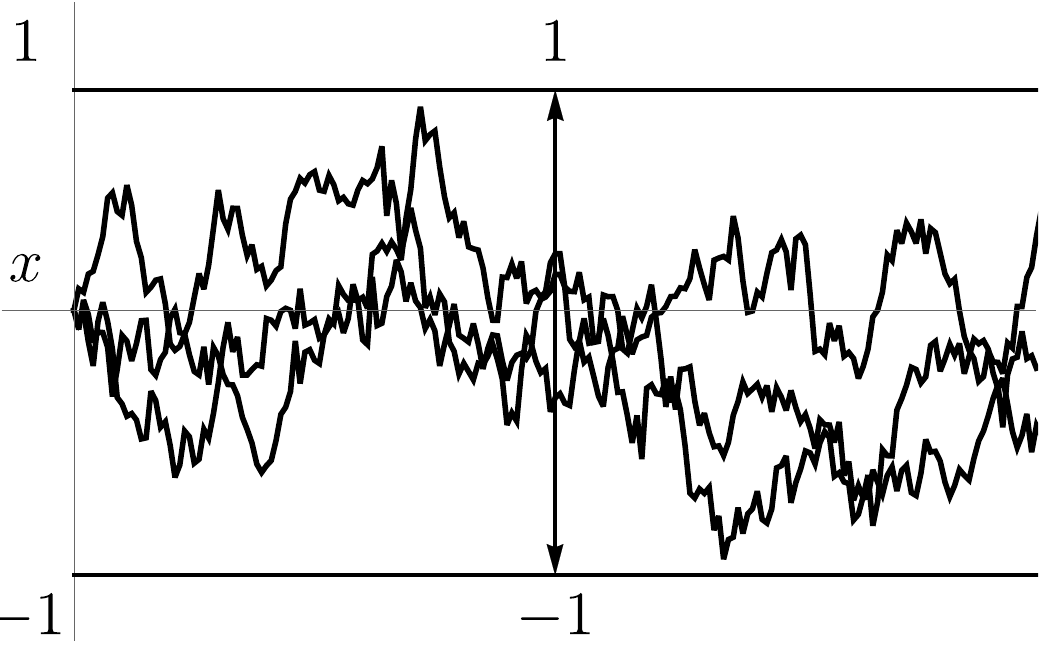}

\caption{}
\end{figure}

\subsection{Operators and generalized Dirichlet forms}
\begin{lem}
Let $\mu$ be a $\sigma$-finite positive measure as before, $\left(J,\mathscr{B}\right)$
with Borel $\sigma$-algebra $\mathscr{B}$. Here $J=[0,1)$ or $J=[0,\infty)$.
Then the following are equivalent:
\begin{enumerate}
\item \label{enu:t1}$f\left(y\right)-f\left(x\right)=\int_{x}^{y}T_{\mu}f\,d\mu$;
\item \label{enu:t2}$\int_{J}\varphi\,df=\int_{J}\varphi T_{\mu}f\,d\mu$,
$\forall\varphi\in C_{c}^{\infty}\left(J\right)$;
\item \label{enu:t3}$df\ll\mu$ and $\frac{df}{d\mu}=T_{\mu}f$. 
\end{enumerate}
If $\mathscr{H}\left(K_{\mu}\right)$ is the RKHS of the p.d. kernel
$K_{\mu}\left(A,B\right):=\mu\left(A\cap B\right)$, then in (\ref{enu:t3})
we have 
\begin{equation}
\left\Vert df\right\Vert _{\mathscr{H}\left(K_{\mu}\right)}=\left\Vert T_{\mu}f\right\Vert _{L^{2}\left(\mu\right)}.
\end{equation}

\end{lem}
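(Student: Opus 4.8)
The plan is to establish the three-way equivalence by proving $(\ref{enu:t1})\Leftrightarrow(\ref{enu:t3})$ and $(\ref{enu:t2})\Leftrightarrow(\ref{enu:t3})$, and then to deduce the RKHS norm identity by making explicit the unitary identification of $\mathscr{H}(K_{\mu})$ with $L^{2}(\mu)$. Throughout I would treat $T_{\mu}f$ as a fixed element $h:=T_{\mu}f\in L^{2}(\mu)$, so that all three conditions are simply three ways of asserting that $h$ is the Radon--Nikodym density $df/d\mu$; note $f$ is automatically continuous, since $\mu$ non-atomic makes $x\mapsto f(0)+\int_{0}^{x}h\,d\mu$ continuous.

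For $(\ref{enu:t1})\Leftrightarrow(\ref{enu:t3})$ I would invoke \lemref{C1} directly. Condition $(\ref{enu:t1})$ says exactly that the two measures $df$ and $h\,d\mu$ agree on every half-open interval $(x,y]\subset J$ (using $f$ continuous so that $df((x,y])=f(y)-f(x)$). The half-open intervals form a $\pi$-system generating $\mathscr{B}$, and both measures are $\sigma$-finite (the second because $h\in L^{2}(\mu)\subset L^{1}_{loc}(\mu)$ and $\mu$ is $\sigma$-finite), so the $\pi$--$\lambda$ uniqueness theorem upgrades agreement on intervals to agreement on all of $\mathscr{B}$, yielding $df=h\,d\mu$, i.e. $df\ll\mu$ with density $h$. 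The converse is the trivial restriction of the measure identity back to intervals. This is precisely the interval-to-Borel reduction carried out in \lemref{C1}.

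For $(\ref{enu:t2})\Leftrightarrow(\ref{enu:t3})$ the direction $(\ref{enu:t3})\Rightarrow(\ref{enu:t2})$ is immediate, since equal measures assign every $\varphi$ the same integral. The substance is $(\ref{enu:t2})\Rightarrow(\ref{enu:t3})$: the hypothesis states that the signed Stieltjes measure $df$ and the (signed) measure $h\,d\mu$ integrate every $\varphi\in C_{c}^{\infty}(J)$ identically, and since $C_{c}^{\infty}(J)$ is dense in $C_{c}(J)$ in the supremum norm on compacta and both are locally finite regular Borel measures, this forces $df=h\,d\mu$, which is $(\ref{enu:t3})$. I expect this to be the main obstacle, precisely because $df$ is a \emph{signed} measure coming from a BV function: one must run the measure-uniqueness argument for signed measures --- e.g. by approximating $\chi_{(x,y]}$ by smooth $\varphi$ and passing to the limit (reconnecting with the integration-by-parts identity of \thmref{D2}), or by invoking the Riesz representation theorem --- while keeping the local-finiteness and regularity hypotheses that make $df$ well-behaved.

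Finally, for the norm identity I would exhibit the isometry explicitly. The kernel $K_{\mu}(A,B)=\mu(A\cap B)=\left\langle \chi_{A},\chi_{B}\right\rangle _{L^{2}(\mu)}$ has feature map $A\mapsto\chi_{A}$, so the map $U:L^{2}(\mu)\rightarrow\mathscr{H}(K_{\mu})$ determined by $U(\chi_{A})=K_{\mu}(\cdot,A)$ extends by linearity and continuity to a unitary, since the indicators span a dense subspace of $L^{2}(\mu)$ and $U$ preserves inner products by the very definition of the RKHS norm. Once $(\ref{enu:t3})$ is known, the density $h=T_{\mu}f$ corresponds under $U$ to the measure $A\mapsto\int_{A}h\,d\mu=df(A)$, i.e. $U(T_{\mu}f)=df$. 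Since $U$ is isometric, $\left\Vert df\right\Vert _{\mathscr{H}(K_{\mu})}=\left\Vert T_{\mu}f\right\Vert _{L^{2}(\mu)}$, as asserted.
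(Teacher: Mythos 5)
Your proof is correct, but it runs along a genuinely different route than the paper's. The paper pairs the equivalences as $(\ref{enu:t1})\Leftrightarrow(\ref{enu:t2})$ and $(\ref{enu:t2})\Leftrightarrow(\ref{enu:t3})$: it gets $(\ref{enu:t1})\Leftrightarrow(\ref{enu:t2})$ by a Riemann--Stieltjes sum approximation (comparing $\sum_{i}\varphi\left(x_{i}\right)\left(f\left(x_{i+1}\right)-f\left(x_{i}\right)\right)$ with $\sum_{i}\varphi\left(x_{i}\right)\int_{x_{i}}^{x_{i+1}}T_{\mu}f\,d\mu$ over partitions), and then gets $(\ref{enu:t2})\Rightarrow(\ref{enu:t3})$ by substituting $\varphi=\chi_{B}$ directly into $(\ref{enu:t2})$ --- a step which, as written, glosses over the fact that indicators are not in $C_{c}^{\infty}\left(J\right)$. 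Your pairing $(\ref{enu:t1})\Leftrightarrow(\ref{enu:t3})$, $(\ref{enu:t2})\Leftrightarrow(\ref{enu:t3})$ replaces the Riemann-sum sketch by the interval-to-Borel $\pi$--$\lambda$ uniqueness argument (which is essentially the content of \lemref{C1}, as you note), and it confronts the smooth-to-indicator approximation head-on in $(\ref{enu:t2})\Rightarrow(\ref{enu:t3})$, which is precisely where the paper is informal; so your version is, if anything, tighter at the one delicate point. For the norm identity, the difference is sharper: the paper simply cites its earlier RKHS work (the characterization of $\mathscr{H}\left(K_{\mu}\right)$ as measures $\rho\ll\mu$ with $d\rho/d\mu\in L^{2}\left(\mu\right)$ and $\left\Vert \rho\right\Vert _{\mathscr{H}\left(K_{\mu}\right)}=\left\Vert d\rho/d\mu\right\Vert _{L^{2}\left(\mu\right)}$, reproved later as \thmref{F1}), whereas you build the isometry self-containedly from the feature-map identity $K_{\mu}\left(A,B\right)=\left\langle \chi_{A},\chi_{B}\right\rangle _{L^{2}\left(\mu\right)}$, density of simple functions in $L^{2}\left(\mu\right)$, and the computation $U\left(T_{\mu}f\right)\left(A\right)=\int_{A}T_{\mu}f\,d\mu=df\left(A\right)$. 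What each approach buys: yours is a complete argument that leans on no external reference; the paper's makes the lemma an immediate corollary of a general RKHS theorem it needs anyway, at the cost of deferring the real work to \thmref{F1} and the cited prior paper.
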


\begin{proof}
In our previous paper (see e.g., \cite{2017arXiv170708492J,MR4025985})
we studied the RKHS $\mathscr{H}\left(K_{\mu}\right)$ as a Hilbert
space of measures $\rho$ such that $\rho\ll\mu$ and $\frac{d\rho}{d\mu}\in L^{2}\left(\mu\right)$,
$\left\Vert \rho\right\Vert _{\mathscr{H}\left(K_{\mu}\right)}=\left\Vert d\rho/d\mu\right\Vert _{L^{2}\left(\mu\right)}$;
and so we apply this result to the current setting, with $\rho=df$
as a Stieltjes measure. We will come back to this point in \secref{sc}.

Details: 

(\ref{enu:t1})$\Leftrightarrow$(\ref{enu:t2}). We have 
\begin{eqnarray}
\int\varphi df & \approx & \sum_{i}\varphi\left(x_{i}\right)\left(f\left(x_{i+1}\right)-f\left(x_{i}\right)\right)\label{eq:D2}\\
 & \underset{\text{by \ensuremath{\left(\ref{enu:t1}\right)}}}{=} & \sum_{i}\varphi\left(x_{i}\right)\int_{x_{i}}^{x_{i+1}}T_{\mu}f\,d\mu\nonumber \\
 & \approx & \int_{J}\varphi\left(T_{\mu}f\right)d\mu\quad\left(\text{standard integral approximation}\right).\nonumber 
\end{eqnarray}

(\ref{enu:t2})$\Rightarrow$(\ref{enu:t3}). Rewrite (\ref{enu:t2})
with $\varphi=\chi_{B}$, $B\in\mathscr{B}$, then 
\begin{equation}
\int_{B}df=\int_{B}\left(T_{\mu}f\right)d\mu.\label{eq:D3}
\end{equation}
Thus $\mu\left(B\right)=0$ $\Rightarrow$ $df\left(B\right)=0$,
and $df\ll\mu$ with $df/d\mu=T_{\mu}f$ which is (\ref{enu:t3}). 

(\ref{enu:t3})$\Rightarrow$(\ref{enu:t2}) is clear.
\end{proof}
\begin{cor}
\label{cor:D3}From \thmref{D2} we now get the following \uline{dual
pair} (with dense domains)
\[
T_{\mu}:\mathscr{L}^{2}\left(\mu\right)\longrightarrow L^{2}\left(\mu\right);
\]
and 
\[
D=-\frac{d}{dx}:L^{2}\left(\mu\right)\longrightarrow\mathscr{L}^{2}\left(\mu\right),
\]
where
\[
\mathscr{L}^{2}\left(\mu\right):=\overline{dom\left(T_{\mu}\right)}^{L^{2}\left(\lambda\right)},
\]
and $\lambda=d/dx=$ the usual Lebesgue measure restricted to the
fixed interval $J$. Recall, $\mu$ is assumed supported in $J$. 

We therefore obtain the following two selfadjoint operators: 
\begin{equation}
T_{\mu}T_{\mu}^{*}:L^{2}\left(\mu\right)\longrightarrow L^{2}\left(\mu\right);\label{eq:D5}
\end{equation}
and 
\begin{equation}
T_{\mu}^{*}T_{\mu}:\mathscr{L}^{2}\left(\mu\right)\longrightarrow\mathscr{L}^{2}\left(\mu\right)\label{eq:D6}
\end{equation}
with corresponding Dirichlet forms:
\begin{equation}
\left\langle \varphi,T_{\mu}T_{\mu}^{*}\varphi\right\rangle _{L^{2}\left(\mu\right)}=\int_{J}\left|\varphi'\right|^{2}d\mu;\label{eq:D7}
\end{equation}
and 
\begin{equation}
\left\langle f,T_{\mu}^{*}T_{\mu}f\right\rangle _{\mathscr{L}^{2}\left(\mu\right)}=\int_{J}|f^{\left(\mu\right)}|^{2}dx.\label{eq:D8}
\end{equation}
\end{cor}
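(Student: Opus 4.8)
The plan is to recognize the pair $\left(D,T_{\mu}\right)$ as a concrete instance of the abstract \emph{symmetric pair} of \defref{sp} and then to read off all four assertions from the general machinery assembled in \secref{DP}. First I would fix the two Hilbert spaces $\mathscr{H}_{1}=\mathscr{L}^{2}\left(\mu\right)$ (a closed subspace of $L^{2}\left(\lambda\right)$, carrying the $L^{2}\left(\lambda\right)$ inner product) and $\mathscr{H}_{2}=L^{2}\left(\mu\right)$, together with the dense domains $\mathscr{D}_{1}=dom\left(T_{\mu}\right)$ and $\mathscr{D}_{2}=C_{c}^{\infty}\left(J\right)$: the former is dense in $\mathscr{L}^{2}\left(\mu\right)$ by the defining closure $\mathscr{L}^{2}\left(\mu\right)=\overline{dom\left(T_{\mu}\right)}^{L^{2}\left(\lambda\right)}$, and the latter is dense in $L^{2}\left(\mu\right)$. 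The symmetric-pair identity (\ref{eq:sp1}) is then exactly (\ref{eq:cc29}) of \thmref{D2}: for $f\in\mathscr{D}_{1}$, $\varphi\in\mathscr{D}_{2}$,
\[
\left\langle T_{\mu}f,\varphi\right\rangle _{L^{2}\left(\mu\right)}=\int_{J}\left(T_{\mu}f\right)\varphi\,d\mu=-\int_{J}\varphi'f\,dx=\left\langle f,D\varphi\right\rangle _{\mathscr{L}^{2}\left(\mu\right)},
\]
since $D\varphi=-\varphi'$. Thus $\left(D,T_{\mu}\right)$ is a symmetric pair.

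Next I would invoke \lemref{ST} and the remark following it: a symmetric pair is automatically closable, so $\overline{T_{\mu}}$ is a closed, densely defined operator $\mathscr{L}^{2}\left(\mu\right)\to L^{2}\left(\mu\right)$. Applying von Neumann's theorem (\thmref{vN}) to the single closed operator $\overline{T_{\mu}}$ then yields, in one stroke, that $\bigl(\overline{T_{\mu}}\bigr)^{*}\overline{T_{\mu}}$ is selfadjoint in $\mathscr{L}^{2}\left(\mu\right)$ and $\overline{T_{\mu}}\bigl(\overline{T_{\mu}}\bigr)^{*}$ is selfadjoint in $L^{2}\left(\mu\right)$, both densely defined. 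Using the closability statement of \thmref{in1}, namely $\bigl(\overline{T_{\mu}}\bigr)^{*}=T_{\mu}^{*}$, these are precisely the operators $T_{\mu}^{*}T_{\mu}$ of (\ref{eq:D6}) and $T_{\mu}T_{\mu}^{*}$ of (\ref{eq:D5}). Routing both products through $\overline{T_{\mu}}$ in this way is cleaner than splitting the two statements of \lemref{ST} across $S$ and $T$, since it avoids having to establish maximality of the pair.

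For the Dirichlet forms I would use the defining adjoint relation of \defref{B1}. For $\varphi$ in the form domain of $T_{\mu}T_{\mu}^{*}$,
\[
\left\langle \varphi,T_{\mu}T_{\mu}^{*}\varphi\right\rangle _{L^{2}\left(\mu\right)}=\left\langle T_{\mu}^{*}\varphi,T_{\mu}^{*}\varphi\right\rangle _{\mathscr{L}^{2}\left(\mu\right)}=\bigl\Vert T_{\mu}^{*}\varphi\bigr\Vert^{2},
\]
while for $f$ in the form domain of $T_{\mu}^{*}T_{\mu}$,
\[
\left\langle f,T_{\mu}^{*}T_{\mu}f\right\rangle _{\mathscr{L}^{2}\left(\mu\right)}=\left\Vert T_{\mu}f\right\Vert ^{2},\qquad T_{\mu}f=f^{\left(\mu\right)}.
\]
Unwinding each squared norm through the containment $D\subset T_{\mu}^{*}$ of \lemref{ST} (so that $T_{\mu}^{*}\varphi=-\varphi'$ on the relevant domain) and through the definitions of the two ambient inner products then produces the displayed Dirichlet integrals (\ref{eq:D7}) and (\ref{eq:D8}).

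The main obstacle I anticipate is not the formal bookkeeping but the precise identification of the adjoint domains. The containment $D\subset T_{\mu}^{*}$ is immediate, yet to be sure that the two Dirichlet forms collapse to the clean, boundary-term-free expressions in (\ref{eq:D7})--(\ref{eq:D8}) one must control how much larger $T_{\mu}^{*}$ is than $D$ and confirm that the form closures of $\int_{J}\lvert\varphi'\rvert^{2}$ and $\int_{J}\lvert f^{\left(\mu\right)}\rvert^{2}$ coincide with the operator domains delivered by \thmref{vN}. This is exactly where non-atomicity of $\mu$ and the continuity inclusion $\mathscr{D}_{1}\subset C\left(\left[0,1\right]\right)$ from (\ref{eq:Dmax}) enter, ensuring that the Stieltjes integration by parts underlying (\ref{eq:cc29}) contributes no hidden boundary terms on the relevant dense domains.
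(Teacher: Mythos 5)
Your overall route is the paper's own: \corref{D3} carries no separate proof in the paper precisely because it is meant to be read off from the symmetric-pair identity (\ref{eq:cc29}) of \thmref{D2} together with \lemref{ST} and \thmref{vN}, which is what you do. Your refinement of routing both products through the single closed operator $\overline{T_{\mu}}$ is correct and genuinely cleaner: applying \lemref{ST} verbatim to the pair $\left(D,T_{\mu}\right)$ would produce the selfadjoint operators $D^{*}\overline{D}$ in $L^{2}\left(\mu\right)$ and $T_{\mu}^{*}\overline{T_{\mu}}$ in $\mathscr{L}^{2}\left(\mu\right)$, and identifying $D^{*}\overline{D}$ with $\overline{T_{\mu}}\,T_{\mu}^{*}$ would require maximality of the pair, which your one-operator application of von Neumann avoids.

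There is, however, a real problem with your last step: the computation you set up does not ``produce the displayed Dirichlet integrals,'' and cannot. You correctly reduce to
\[
\left\langle \varphi,T_{\mu}T_{\mu}^{*}\varphi\right\rangle _{L^{2}\left(\mu\right)}=\left\Vert T_{\mu}^{*}\varphi\right\Vert _{\mathscr{L}^{2}\left(\mu\right)}^{2},\qquad\left\langle f,T_{\mu}^{*}T_{\mu}f\right\rangle _{\mathscr{L}^{2}\left(\mu\right)}=\left\Vert T_{\mu}f\right\Vert _{L^{2}\left(\mu\right)}^{2};
\]
but $\mathscr{L}^{2}\left(\mu\right)$ is by definition a closed subspace of $L^{2}\left(\lambda\right)$ carrying the \emph{Lebesgue} inner product, while $T_{\mu}f=f^{\left(\mu\right)}$ lies in $L^{2}\left(\mu\right)$. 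So unwinding the norms (with $T_{\mu}^{*}\varphi=-\varphi'$ on the relevant domain) gives
\[
\left\langle \varphi,T_{\mu}T_{\mu}^{*}\varphi\right\rangle _{L^{2}\left(\mu\right)}=\int_{J}\left|\varphi'\right|^{2}dx,\qquad\left\langle f,T_{\mu}^{*}T_{\mu}f\right\rangle _{\mathscr{L}^{2}\left(\mu\right)}=\int_{J}|f^{\left(\mu\right)}|^{2}d\mu,
\]
i.e., (\ref{eq:D7}) and (\ref{eq:D8}) with the measures $dx$ and $d\mu$ interchanged relative to the printed display. The corrected versions are the right ones: they match the paper's earlier remark that $QF\left(K_{F}\right)\left(\varphi,\psi\right)=-\int\varphi'\psi'\,d\lambda$ with $T_{\mu}T_{\mu}^{*}$ extending this form, whereas the printed versions are false in general (for singular $\mu$, say the Cantor measure, $\int_{J}\left|\varphi'\right|^{2}dx\neq\int_{J}\left|\varphi'\right|^{2}d\mu$ for generic $\varphi\in C_{c}^{\infty}\left(J\right)$), so (\ref{eq:D7})--(\ref{eq:D8}) as displayed contain a typo. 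Your argument is sound in substance, but a complete proof must state this discrepancy and prove the corrected identities, rather than assert that norms which manifestly evaluate against one measure equal integrals against the other. Your closing caveat, that the form identities on the full operator domains require controlling $T_{\mu}^{*}$ beyond its restriction $D$, is well placed; that point is left implicit in the paper as well.
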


For a given non-atomic measure $\mu$, we shall refer to the quadratic
form (\ref{eq:D7}) as the Dirichlet form induced from $\mu$. It
further follows from (\ref{eq:D5}) that this Dirichlet form has a
selfadjoint, semibounded $(A\geq0$) realization, say $A$, where
$A$ is a selfadjoint extension of our Krein-Feller operator $K_{F}$.
Here, initially $K_{F}$ is considered as a symmetric operator with
dense domain in $L^{2}(\mu)$. We shall further show that, when the
diffusion semigroup is realized in $L^{2}(\mu)$, then its infinitesimal
generator is $\lyxmathsym{\textendash}A$. It is known from general
theory that the Dirichlet form determines the diffusion semigroup;
and vice versa. See, e.g., \cite{MR613983,MR65809,MR87254,MR933819},
and also \subsecref{KFD}.

\section{\label{sec:rkhs}Applications to IFS measures }

The present section deals with applications of Krein-Feller operators
in the setting of IFS measures. For the operator theoretic framework,
see \secref{KF}, especially \corref{D3}. It is subdivided into two
subsections. The first subsection introduces a general class of iterated
function system (IFS) measures; while the second specializes to IFS
measures with support contained in finite intervals, and with the
Krein-Feller operators. Background references for this include \cite{MR625600,MR3838440,MR1655831,MR1667822,MR0030584,MR0047744,MR0228654,MR4184588,MR4025934,MR3110587,MR2793121}.

\subsection{\label{subsec:aifs}Iterated function system measures}

The theory of iterated function system (IFS) measures is extensive.
IFS measures arise in diverse applications, geometric analysis, fractal
harmonic analysis, chaotic dynamics and more. Here we wish to cite
the following papers of most direct relevance for our current discussion,
\cite{MR4025985,MR3996038,MR3968033,MR3958137,MR3793614,MR3736758,MR3559001,MR3552934,MR4211868,MR4149147,MR4080661,MR2793121,MR3838440}.
\begin{defn}
Let $\left(X,\mathscr{B}_{X}\right)$ be a measurable space, $N\in\mathbb{N}$,
and let $\left\{ \sigma_{i}\right\} _{i=1}^{N}$ be a system of continuous
endomorphisms $\sigma_{i}:X\rightarrow X$. Let $\left\{ p_{i}\right\} _{i=1}^{N}$,
$p_{i}>0$, $\sum_{i=1}^{N}p_{i}=1$ be fixed. 

A Borel measure $\mu$ on $X$ is said to be an iterate function system
(IFS) measure w.r.t. the data iff (Def) the following identity holds:
\begin{equation}
\mu=\sum_{i=1}^{N}p_{i}\,\mu\circ\sigma_{i}^{-1}\label{eq:Dd2}
\end{equation}
on the Borel $\sigma$-algebra $\mathscr{B}_{X}$. 

We now turn to an explicit realization of the IFS-measure from e.q.
(\ref{eq:Dd2}).
\end{defn}

\begin{thm}
\label{thm:D4}Let $X$, $N$, $\left\{ \sigma_{i}\right\} _{i=1}^{N}$,
$\left\{ p_{i}\right\} _{i=1}^{N}$ be as above. Consider the infinite
product 
\begin{equation}
\Omega_{N}:=\prod_{\mathbb{N}_{0}}\left\{ 1,2,\cdots,N\right\} ,\label{eq:Dd3}
\end{equation}
and suppose, for all $\omega\in\Omega_{N}$, the intersection below
is a singleton, i.e., 
\begin{equation}
\bigcap_{\stackrel{n=1}{\omega|_{n}=\left\{ i_{1},\cdots,i_{n}\right\} }}^{\infty}\sigma_{i_{n}}\sigma_{i_{n-1}}\cdots\sigma_{i_{1}}\left(X\right)=\left\{ x\left(\omega\right)\right\} ;\label{eq:Dd4}
\end{equation}
then there is an associated IFS measure $\mu$ constructed from the
infinite product
\begin{equation}
\pi:=p\times p\times p\times\cdots\;\text{on }\:\Omega_{N}.\label{eq:Dd5}
\end{equation}
Because of assumption (\ref{eq:Dd4}), we get a well-defined $X$-valued
random variable $W$ (for the probability space $\left(\Omega_{N},\pi\right)$),
and the IFS-measure $\mu$ from (\ref{eq:Dd2}) is then the distribution
of $W$. See \figref{subd}.
\end{thm}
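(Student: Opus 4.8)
The plan is to realize $\mu$ as the distribution (pushforward) of the coding map $W$ and then to extract the fixed-point identity \eqref{eq:Dd2} from the self-similar structure of the infinite product measure $\pi$, coupled with an intertwining relation between $W$ and the maps $\sigma_i$. First I would make $W$ precise and verify measurability. For $\omega=(i_1,i_2,\dots)\in\Omega_N$ and a fixed base point $x_0\in X$, set $W_n(\omega):=\sigma_{i_n}\sigma_{i_{n-1}}\cdots\sigma_{i_1}(x_0)$; each $W_n$ depends on only finitely many coordinates of $\omega$ and is therefore $\mathscr{B}_X$-measurable. Assumption \eqref{eq:Dd4} says the nested images shrink to the single point $x(\omega)$, so $W_n(\omega)\to x(\omega)$ for every $\omega$; hence $W:=\lim_n W_n$ is a pointwise limit of measurable maps and is itself measurable, provided $X$ carries a metric compatible with the continuity hypothesis on the $\sigma_i$ (so that Borel structure is preserved under pointwise limits). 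I then define $\mu:=W_*\pi=\pi\circ W^{-1}$, which is the distribution of the $X$-valued random variable $W$ on $(\Omega_N,\pi)$.

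Next I would record the key structural fact about the infinite product, namely the one-step disintegration
\[
\pi=\sum_{i=1}^{N}p_i\,(\tau_i)_*\pi ,
\]
where $\tau_i:\Omega_N\to\Omega_N$ is the insertion map placing the symbol $i$ in the leading slot. I would verify this on the generating algebra of cylinder sets: conditioning on the leading symbol being $i$ (an event of $\pi$-probability $p_i$) leaves the remaining coordinates distributed according to $\pi$, which is precisely the assertion that $\pi$ restricted to that cylinder equals $p_i\,(\tau_i)_*\pi$; summing over $i$ and invoking uniqueness of the product measure on cylinders yields the displayed identity.

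The heart of the argument is the intertwining relation $W\circ\tau_i=\sigma_i\circ W$. Here the composition order in \eqref{eq:Dd4} must be matched carefully with the insertion $\tau_i$: by continuity of $\sigma_i$ one has $\sigma_i\bigl(x(\omega)\bigr)=\sigma_i\bigl(\lim_n W_n(\omega)\bigr)=\lim_n \sigma_i\,W_n(\omega)$, and I would check that the right-hand limit is exactly the one defining $x(\tau_i\omega)$, so that $\sigma_i$ ends up in the outermost position where it can be pulled through the limit. Granting this, the proof closes by a direct pushforward computation:
\[
\mu=W_*\pi=\sum_{i=1}^{N}p_i\,W_*(\tau_i)_*\pi=\sum_{i=1}^{N}p_i\,(\sigma_i)_*\,W_*\pi=\sum_{i=1}^{N}p_i\,\mu\circ\sigma_i^{-1},
\]
which is precisely \eqref{eq:Dd2}; thus the distribution of $W$ is an IFS measure for the given data.

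I expect the main obstacle to be the intertwining step: aligning the order of composition in \eqref{eq:Dd4} with the correct insertion (prepend versus append) on $\Omega_N$, since only one alignment makes $\sigma_i$ land outermost so that continuity lets it commute with the limit; the opposite alignment produces the forward-iteration products, which agree with the backward ones only in distribution and do not transform covariantly under the shift. A secondary but genuine technical point is the measurability of $W$, which requires a metric (or at least second-countable) structure on $X$ so that the pointwise limit of the finitely-determined maps $W_n$ remains Borel.
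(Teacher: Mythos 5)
Your proposal is correct and follows essentially the same route as the paper: both construct the product measure $\pi$ on cylinder sets, define $W$ via the singleton-intersection condition \eqref{eq:Dd4}, and take $\mu=\pi\circ W^{-1}$ as the distribution of $W$. The paper compresses the verification of \eqref{eq:Dd2} into the phrase ``one checks''; your disintegration identity $\pi=\sum_{i=1}^{N}p_i\,(\tau_i)_*\pi$ combined with the intertwining $W\circ\tau_i=\sigma_i\circ W$ (under the nested, backward-iteration reading of the composition order, which you rightly flag as the only alignment that lets $\sigma_i$ pass through the limit) is precisely that check, carried out in full.
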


\begin{figure}[H]
\includegraphics[width=0.9\columnwidth]{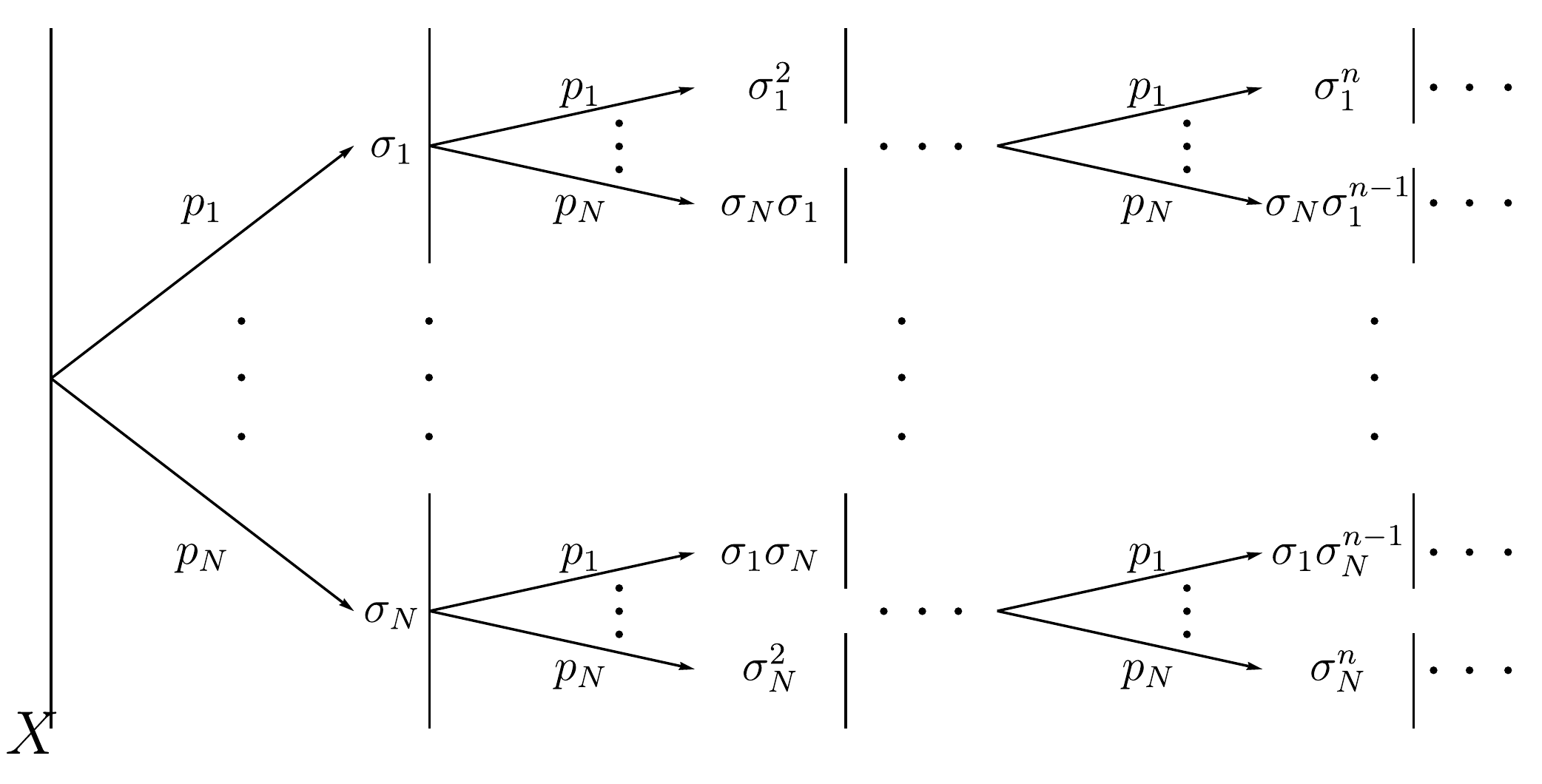}

\caption{\label{fig:subd}}

\end{figure}

\begin{proof}
With (\ref{eq:Dd4}), we define the random variable $W^{\left(IFS\right)}\left(\omega\right):=x\left(\omega\right)$,
$\omega\in\Omega_{N}$. If $\left(i_{1},\cdots,i_{n}\right)\in\prod_{1}^{n}\left\{ 1,\cdots,N\right\} $,
then the measure $\pi$ is specified on cylinder sets as follows:
\begin{equation}
\pi\left(\left[i_{1},\cdots,i_{n}\right]\right)=p_{i_{1}}p_{i_{2}}\cdots p_{i_{n}}.\label{eq:Dd6}
\end{equation}
The measure $\pi$ is then defined on $\Omega_{N}$ via Kolomogorov's
consistency extension theorem, see \cite{MR562914,MR735967,MR0494348}.
Let 
\begin{equation}
W:\Omega_{N}\longrightarrow X\label{eq:Dd7}
\end{equation}
be the random variable specified by the condition in (\ref{eq:Dd4}),
and set 
\begin{equation}
\mu:=\pi\circ W^{-1}.\label{eq:Dd8}
\end{equation}
One checks that $\mu$ will then satisfy the IFS condition in (\ref{eq:Dd2}).
\end{proof}
\begin{cor}
\label{cor:D5}In this corollary we fix a system $\left\{ \sigma_{i}\right\} _{i=1}^{N}$
of endomorphisms, and we consider the IFS measure $\mu$ as it depends
on the choice of probability weights $p=\left(p_{i}\right)_{i=1}^{N}$,
$\sum p_{i}=1$. Set $\mu^{\left(p\right)}=$ the solution to (\ref{eq:Dd2});
(see also (\ref{eq:Dd7})). Then if $p\neq q$ (i.e., $\exists i$
such that $p_{i}\neq q_{i}$) then the two measures $\mu^{\left(p\right)}$
and $\mu^{\left(q\right)}$ are mutually singular.
\end{cor}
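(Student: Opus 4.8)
The plan is to reduce the statement to the classical mutual singularity of infinite product (Bernoulli) measures and then transport this singularity from the coding space $\Omega_N$ down to $X$ through the address map $W$ of \thmref{D4}. First I would recall from \thmref{D4} that each IFS measure is a pushforward, $\mu^{(p)} = \pi_p \circ W^{-1}$, where $\pi_p = p \times p \times \cdots$ is the Bernoulli product \eqref{eq:Dd5} on $\Omega_N = \prod_{\mathbb{N}_0}\{1,2,\ldots,N\}$ with one-dimensional marginal $p = (p_i)$, and $W(\omega) = x(\omega)$ is the surjective address map determined by the singleton condition \eqref{eq:Dd4}. Thus it suffices to separate $\pi_p$ and $\pi_q$ by a Borel subset of $\Omega_N$ whose $W$-preimage behavior is controllable.

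Next I would exhibit the separating set via the strong law of large numbers. For each symbol $i$ let $F_i^{(n)}(\omega) = \frac{1}{n}\sum_{k=1}^{n}\mathbbm{1}_{\{\omega_k = i\}}$ be the empirical frequency of $i$ among the first $n$ coordinates. Under $\pi_p$ the coordinate maps are i.i.d.\ with $\mathbb{P}(\omega_k = i) = p_i$, so the SLLN gives $F_i^{(n)} \to p_i$ $\pi_p$-a.s. Setting $A_p := \{\omega : F_i^{(n)}(\omega) \to p_i \text{ for all } i\}$, a tail Borel set, we obtain $\pi_p(A_p) = 1$; and since $p \neq q$ forces $p_{i_0}\neq q_{i_0}$ for some $i_0$, the frequencies along any $\omega \in A_p$ cannot converge to $q$, so $\pi_q(A_p) = 0$. (Equivalently, Kakutani's dichotomy applies, the per-factor Hellinger affinity $\sum_i \sqrt{p_i q_i}$ being strictly less than $1$ precisely when $p \neq q$.)

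Finally I would push $A_p$ down to $X$. Let $O \subset \Omega_N$ be the \emph{overlap} set of addresses $\omega$ whose fiber $W^{-1}(W(\omega))$ is not a singleton; under a separation hypothesis on the images $\{\sigma_i(X)\}$ (essential disjointness, as in the Cantor case \eqref{eq:CAN2} where $\sigma_0(C_3)\,\dot{\cup}\,\sigma_1(C_3) = C_3$) this set satisfies $\pi_p(O) = \pi_q(O) = 0$. Put $B := W(A_p \setminus O) \subseteq X$. On one hand $A_p \setminus O \subseteq W^{-1}(B)$, so $\mu^{(p)}(B) = \pi_p(W^{-1}(B)) \ge \pi_p(A_p \setminus O) = 1$. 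On the other hand, any $\omega' \in W^{-1}(B)$ shares its image with some $\omega \in A_p \setminus O$ whose fiber is a singleton, forcing $\omega' = \omega \in A_p$; hence $W^{-1}(B) \subseteq A_p$ and $\mu^{(q)}(B) = \pi_q(W^{-1}(B)) \le \pi_q(A_p) = 0$. Therefore $B$ witnesses $\mu^{(p)} \perp \mu^{(q)}$.

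The main obstacle is this last transfer step: singularity of the product measures on $\Omega_N$ does not, by itself, descend through a non-injective map, so the crux is establishing that the multiple-address set $O$ is null for both $\pi_p$ and $\pi_q$. For affine systems on the line satisfying the open set condition this is standard, but in the bare endomorphism generality of \eqref{eq:Dd2}--\eqref{eq:Dd4} one must either impose such a separation condition or verify directly that distinct addresses can meet only in a $\mu$-null set; this is where the genuine work of the argument lies.
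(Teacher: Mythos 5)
Your argument has the same skeleton as the paper's own proof: Kakutani's dichotomy gives $\times_{\mathbb{N}}p\perp\times_{\mathbb{N}}q$ on $\Omega_{N}$, and singularity is then transported to $X$ through the coding map $W$ of \thmref{D4} via $\mu^{(p)}=\pi_{p}\circ W^{-1}$ from (\ref{eq:Dd8}). The difference lies in the transport step. The paper treats it as immediate (``and by (\ref{eq:Dd8}), so are the two IFS measures''), whereas you correctly refuse to: mutual singularity does \emph{not} in general descend through a non-injective measurable map, and the singleton condition (\ref{eq:Dd4}) only makes $W$ well defined, not injective. Your repair --- delete the overlap set $O$ of addresses with non-singleton fibers, push $A_{p}\setminus O$ forward to $B$, and verify the two inclusions $A_{p}\setminus O\subseteq W^{-1}(B)\subseteq A_{p}$ --- is exactly what is needed, with the minor technical remark that one should invoke Lusin--Suslin (injectivity of $W$ off $O$) to see that $B$ is Borel, or else accept $B$ analytic and universally measurable. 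So your proposal is a strictly more careful rendering of the same argument, and the ``crux'' you isolate is a genuine gap in the paper's two-sentence proof, not a defect of yours.

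In fact your closing caveat points at the statement itself. In the bare generality of (\ref{eq:Dd2})--(\ref{eq:Dd4}) the corollary is false: take $N=3$, $\sigma_{1}(x)=\sigma_{2}(x)=x/3$, $\sigma_{3}(x)=(x+2)/3$, and the weights $p=(\tfrac{1}{2},\tfrac{1}{6},\tfrac{1}{3})$, $q=(\tfrac{1}{6},\tfrac{1}{2},\tfrac{1}{3})$. The singleton condition (\ref{eq:Dd4}) holds, but (\ref{eq:Dd2}) depends on the weights only through $p_{1}+p_{2}$ and $p_{3}$, so $\mu^{(p)}=\mu^{(q)}$ is one and the same (biased Cantor) measure --- certainly not singular with respect to itself; here the overlap set $O$ carries full measure and the descent collapses entirely. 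Even for distinct maps with overlapping images (biased Bernoulli convolutions), whether singularity survives the pushforward is a delicate question, not a consequence of Kakutani's theorem. The corollary is correct, and your proof of it complete, precisely under a separation hypothesis of the kind you impose --- e.g.\ the images $\sigma_{i}(X)$ pairwise disjoint, as in the Cantor cases of \exaref{DE1}, or meeting in at most countably many points, as in case (\ref{enu:de1}) there, which makes $O$ countable and hence null for both $\pi_{p}$ and $\pi_{q}$.
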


\begin{proof}
The result is immediate from (\ref{eq:Dd8}) and Kakutani's dichotomy
theorem for infinite product measures, applied to (\ref{eq:Dd5}).
By Kakutani \cite{MR14404}, the two measures $\times_{\mathbb{N}}p$
and $\times_{\mathbb{N}}q$ are mutually singular; and by (\ref{eq:Dd8}),
so are the two IFS measures $\mu^{\left(p\right)}$ and $\mu^{\left(q\right)}$. 
\end{proof}

\subsection{IFS measures supported on compact intervals}

Here we take 
\begin{equation}
\sigma_{i}\left(x\right):=\lambda_{i}x+b_{i}\label{eq:Dd9}
\end{equation}
where $0<\lambda_{i}<1$, $b_{i}\in\mathbb{R}$, $x\in\mathbb{R}$,
$1\leq i\leq N$. Fix $\left\{ p_{i}\right\} _{i=1}^{N}$ as above.
Then the corresponding IFS measure $\mu$ (see (\ref{eq:Dd5})) will
satisfy 
\begin{equation}
\sum_{i=1}^{N}p_{i}\int f\left(\lambda_{i}x+b_{i}\right)\mu\left(dx\right)=\int f\left(x\right)\mu\left(dx\right)\label{eq:Dd10}
\end{equation}
for all bounded continuous functions $f$ on $\mathbb{R}$. One checks
that $\mu$ will then be supported on a compact interval $J\subset\mathbb{R}$. 

\renewcommand{\arraystretch}{2}
\begin{example}[Three IFS measures, Lebesgue measure and two Cantor measures]
\label{exa:DE1} Let $N=2$ and $\left\{ p_{i}\right\} =\left\{ \frac{1}{2},\frac{1}{2}\right\} $. 
\begin{enumerate}
\item \label{enu:de1}$\begin{cases}
\sigma_{1}\left(x\right)=\frac{x}{2}\\
\sigma_{2}\left(x\right)=\frac{x+1}{2}
\end{cases}$ $J=\left[0,1\right]$, $\mu=$ restricted Lebesgue measure. 
\item \label{enu:de2}$\begin{cases}
\sigma_{1}\left(x\right)=\frac{x}{3}\\
\sigma_{2}\left(x\right)=\frac{x+2}{3}
\end{cases}$ $J=\left[0,1\right]$, $\mu_{3}=$ middle $\frac{1}{3}$ Cantor measure;
Hausdorff dim = $\ln2/\ln3$. (See \figref{devil})
\item \label{enu:de3}$\begin{cases}
\sigma_{1}\left(x\right)=\frac{x}{4}\\
\sigma_{2}\left(x\right)=\frac{x+2}{4}
\end{cases}$ $J=\left[0,1\right]$, $\mu_{4}=$ Cantor measure with two omitted
intervals; Hausdorff dim = $\frac{1}{2}$.
\end{enumerate}
\renewcommand{\arraystretch}{1}
\end{example}

\begin{rem}
There is an important difference between the cases (\ref{enu:de2})
and (\ref{enu:de3}) above. Naturally they have different geometries,
different Hausdorff dimension, and they are mutually singular. They
are both IFS measures, but the most striking difference is their respective
harmonic analysis. For the middle fourth Cantor measure $\mu_{4}$
in (\ref{enu:de3}), the corresponding $L^{2}(\mu_{4})$ admits an
orthogonal Fourier series expansion; while the middle third Cantor
measure $\mu_{3}$ in (\ref{enu:de2}) does not. Even more striking
is the fact that $L^{2}(\mu_{3})$ does not admit three orthogonal
Fourier exponentials. For this subject, and related, readers are referred
to \cite{MR1667822,MR1655831,MR3838440}.
\end{rem}

\begin{cor}
Consider a measure $\mu$ specified as in (\ref{eq:Dd10}) above,
so it includes the cases (\ref{enu:de1})--(\ref{enu:de3}) in \exaref{DE1}.

Then for the Dirichlet form (\ref{eq:D7}) in \corref{D3} we have
\begin{equation}
\int_{J}\left|\varphi'\right|^{2}d\mu=\sum_{i=1}^{N}\frac{p_{i}}{\lambda_{i}^{2}}\int\left|\frac{d}{dx}\left(\varphi\left(\lambda_{i}x+b_{i}\right)\right)\right|^{2}\mu\left(dx\right).\label{eq:Dd12}
\end{equation}
\end{cor}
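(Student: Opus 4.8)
The plan is to treat the left-hand side $\int_{J}\left|\varphi'\right|^{2}d\mu$ as $\int_{J}f\,d\mu$ with $f=\left|\varphi'\right|^{2}$, feed this particular $f$ into the self-similarity identity (\ref{eq:Dd10}), and then undo the resulting inner derivatives with the chain rule. First I would note that for $\varphi$ in the natural ($C^{1}$) form-domain of the Dirichlet form (\ref{eq:D7}), the function $f=\left|\varphi'\right|^{2}$ is bounded; moreover the identity (\ref{eq:Dd10}) is exactly the functional restatement of the measure identity (\ref{eq:Dd2}): applying the change-of-variables rule $\int f\,d\!\left(\mu\circ\sigma_{i}^{-1}\right)=\int f\circ\sigma_{i}\,d\mu$ to each term of $\mu=\sum_{i}p_{i}\,\mu\circ\sigma_{i}^{-1}$, with $\sigma_{i}(x)=\lambda_{i}x+b_{i}$, gives (\ref{eq:Dd10}) for every bounded Borel $f$. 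Hence $f=\left|\varphi'\right|^{2}$ is admissible.

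Applying (\ref{eq:Dd10}) to $f=\left|\varphi'\right|^{2}$ then yields
\[
\int_{J}\left|\varphi'(x)\right|^{2}\mu(dx)=\sum_{i=1}^{N}p_{i}\int\bigl|\varphi'(\lambda_{i}x+b_{i})\bigr|^{2}\,\mu(dx).
\]
Next I would invoke the elementary chain rule $\frac{d}{dx}\bigl(\varphi(\lambda_{i}x+b_{i})\bigr)=\lambda_{i}\,\varphi'(\lambda_{i}x+b_{i})$, which, since $0<\lambda_{i}<1$, rearranges to
\[
\bigl|\varphi'(\lambda_{i}x+b_{i})\bigr|^{2}=\frac{1}{\lambda_{i}^{2}}\left|\frac{d}{dx}\bigl(\varphi(\lambda_{i}x+b_{i})\bigr)\right|^{2}.
\]
Substituting this into each summand and pulling the constant $\lambda_{i}^{-2}$ out of the integral produces exactly the asserted expression (\ref{eq:Dd12}), so the proof closes in one substitution.

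I do not expect a genuine obstacle here: the corollary is a direct consequence of self-similarity combined with the chain rule, and the whole content is the bookkeeping of the factors $p_{i}$ and $\lambda_{i}^{-2}$. The only point worth stating carefully is the admissibility of $f=\left|\varphi'\right|^{2}$ as a test function in (\ref{eq:Dd10}); since (\ref{eq:Dd10}) is equivalent to the measure identity (\ref{eq:Dd2}) and therefore holds for all bounded Borel integrands, this is immediate for $\varphi$ in the form-domain, and for general $\varphi$ in the closure of the form one passes to the limit through the $C^{1}$ functions on which (\ref{eq:D7}) was defined.
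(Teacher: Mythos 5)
Your proof is correct and is exactly the argument the paper intends: the paper states this corollary without any written proof, treating it as an immediate consequence of the self-similarity identity (\ref{eq:Dd10}) applied to $f=\left|\varphi'\right|^{2}$ combined with the chain rule $\frac{d}{dx}\left(\varphi\left(\lambda_{i}x+b_{i}\right)\right)=\lambda_{i}\varphi'\left(\lambda_{i}x+b_{i}\right)$, which is precisely the substitution you carried out. Your closing remark on the admissibility of $\left|\varphi'\right|^{2}$ as an integrand in (\ref{eq:Dd10}) is a harmless refinement rather than a departure from the paper's route.
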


\textbf{Cumulative functions $g_{\mu}$ for general IFS measures $\mu$.
}Let $\mu$ be an IFS measure as in \thmref{D4}, and let 
\[
g_{\mu}\left(x\right):=\mu\left(\left[0,x\right]\right).
\]
The scale 3 Cantor measure $\mu_{3}$ with $g_{\mu_{3}}$ is discussed
in \remref{sm} and \exaref{DE1}, see also \figref{devil}. The scale
4 Cantor measure $\mu_{4}$ with $g_{\mu_{4}}$ from \exaref{DE1}
is shown in \figref{c4} below. Note these two cases both have equal
weights $p=\left(p_{i}\right)_{i=1}^{2}=\left\{ 1/2,1/2\right\} $.

\begin{figure}[H]
\includegraphics[width=0.35\columnwidth]{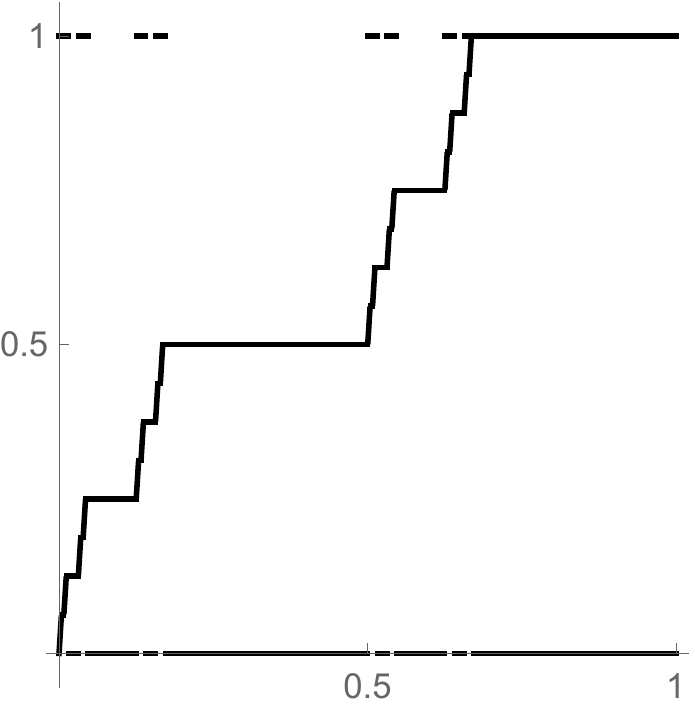}

\caption{\label{fig:c4}$g_{\mu_{4}}\left(\cdot\right)$ the \emph{scale 4
Cantor measure} as a Stieltjes measure.}
\end{figure}

Consider more general IFS measures on $\left[0,1\right]$, i.e., the
unique solutions $\mu$ to: 
\[
\mu=p_{1}\mu\circ\sigma_{1}^{-1}+p_{2}\mu\circ\sigma_{2}^{-1}.
\]
Define
\[
f_{0}\left(x\right)=x\chi_{\left[0,1\right]}\left(x\right)+\chi_{\left(1,\infty\right)}\left(x\right),
\]
and 
\begin{equation}
f_{n}\left(x\right):=p_{1}f_{n-1}\left(\sigma_{1}^{-1}\left(x\right)\right)+p_{2}f_{n-1}\left(\sigma_{2}^{-1}\left(x\right)\right),\quad n\in\mathbb{N}.
\end{equation}
Then 
\begin{equation}
\lim_{n\rightarrow\infty}f_{n}\left(x\right)=g_{\mu}\left(x\right),\quad\forall x\in\left[0,1\right].
\end{equation}

\begin{example}
\label{exa:Cagain}Let $p=\left(p_{i}\right)_{i=1}^{2}=\left\{ \frac{1}{3},\frac{2}{3}\right\} $.
An illustration of $g_{\mu}\left(\cdot\right)$ for the two cases
below is in \figref{lebesgue}.
\begin{enumerate}
\item \label{enu:case3} $\sigma_{1}\left(x\right)=\frac{x}{2}$, $\sigma_{2}\left(x\right)=\frac{x+1}{2};$
\item \label{enu:case4}$\sigma_{1}\left(x\right)=\frac{x}{3}$, $\sigma_{2}\left(x\right)=\frac{x+2}{3}$. 
\end{enumerate}
For case (\ref{enu:case3}), we have 
\[
\left\{ \left(a_{i}\right)\in\prod_{\mathbb{N}}\left\{ 0,1\right\} ,\sum_{1}^{\infty}\frac{a_{i}}{2^{i}}\leq x\right\} \xrightarrow{\;\mu\;}\lim_{m\rightarrow\infty}\left(\frac{2}{3}\right)^{m}\left(\frac{1}{2^{i_{1}}}+\frac{1}{2^{i_{2}}}+\cdots+\frac{1}{2^{i_{m}}}\right)=g_{\mu}\left(x\right).
\]

\end{example}

\begin{figure}[H]
\subfloat[\label{fig:c3varA}$\sigma_{1}\left(x\right)=\frac{x}{2}$, $\sigma_{2}\left(x\right)=\frac{x+1}{2}$.]{\includegraphics[width=0.45\columnwidth]{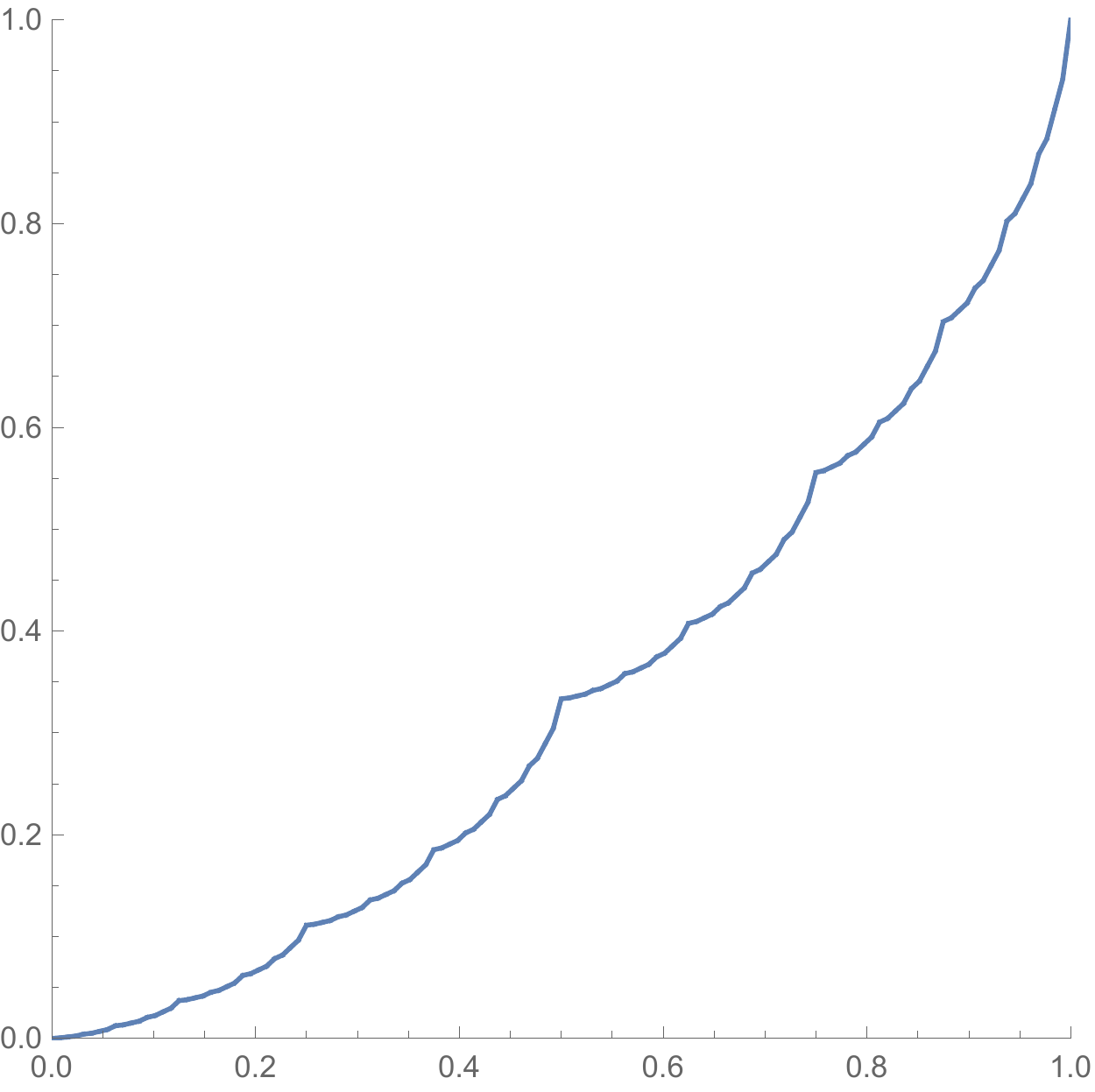}

}\hfill{}\subfloat[\label{fig:c3-var}$\sigma_{1}\left(x\right)=\frac{x}{3}$, $\sigma_{2}\left(x\right)=\frac{x+2}{3}$.]{\includegraphics[width=0.45\columnwidth]{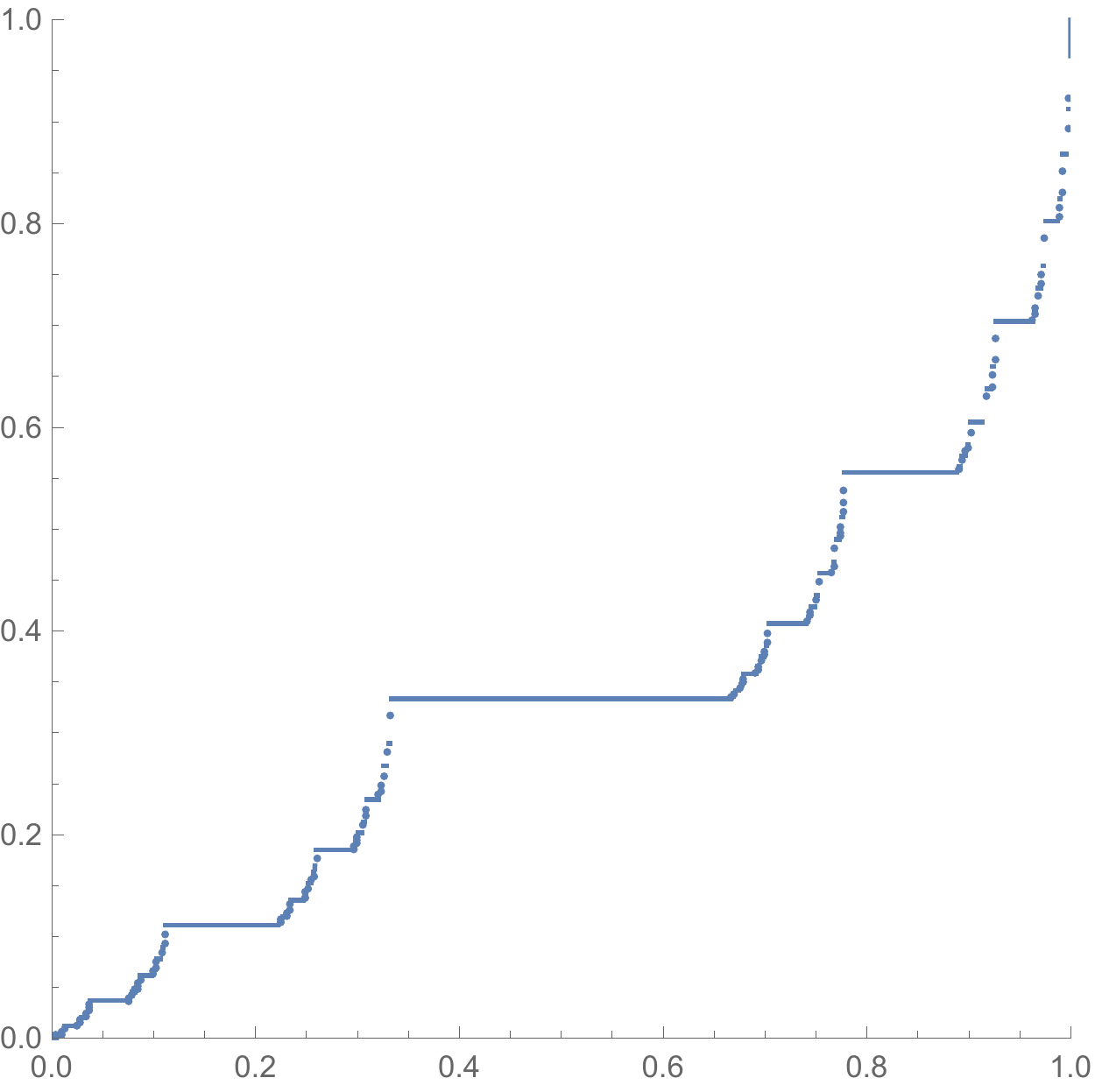}

}

\caption{\label{fig:lebesgue}$g_{\mu}\left(x\right)$ with $p=\left\{ \frac{1}{3},\frac{2}{3}\right\} $.}
\end{figure}

For the related IFS-measures and their cumulative distributions discussed
earlier, see \figref{devil} (\remref{sm}, scale-3 Cantor), \exaref{DE1},
\figref{c4} (scale-4 Cantor). For these cases, the fair-coin measure
is used. And by contrast, \figref{lebesgue} illustrates a choice
of biased Markov chain-weights.

In particular, it follows from \corref{D5} above that the measure
$\mu$ in \exaref{Cagain} (\ref{enu:case3}), see \figref{c3varA},
is mutually singular with respect to Lebesgue measure $\lambda$.
They are mutually singular despite the fact that both measures, $\mu$
and $\lambda$, on the unit-interval, arise from the same pair of
maps, $\left\{ \sigma_{i}\right\} $ by IFS-recursive iteration.

\section{\label{sec:sc}Special case (intervals) vs general measure spaces}

Observation: For general measure spaces $\left(X,\mathscr{B},\mu\right)$,
in an earlier paper, we established a canonical isometry $T_{\mu}$
of the $\text{RKHS}\left(K_{\mu}\right)$ onto $L^{2}\left(\mu\right)$.

In the special case of $X=$ an interval, and $\mathscr{B}=$ the
Borel sigma-algebra, $\mu$ a singular non-atomic measure, we also
have an operator $T_{\mu}$ and it is a special case of the $T_{\mu}$
we introduced in our earlier paper on RKHS theory. Background references
for this include \cite{MR3559001,MR885633,MR0214150,MR0343816,MR1189386,MR3687240,MR3838440,MR3996038}.

\begin{rem}[Distinction between first order and second order operators]
 Our KF-Laplacian (second order) has selfadjoint extensions, for
example $T^{*}T$. As we study the KF-Laplacian with minimal domain,
we study its selfadjoint extensions.
\end{rem}

Let $\left(X,\mathscr{B},\mu\right)$ be a $\sigma$-finite measure
space. We then consider the p.d. kernel on $\mathscr{B}_{fin}\times\mathscr{B}_{fin}$,
defined as 
\begin{equation}
K_{\mu}\left(A,B\right):=\mu\left(A\cap B\right),\quad A,B\in\mathscr{B}_{fin}\label{eq:F1}
\end{equation}
with $\mathscr{H}\left(K_{\mu}\right)$ being the associated RKHS. 
\begin{thm}
\label{thm:F1}We have 
\begin{align}
\mathscr{H}\left(K_{\mu}\right) & =\Big\{ F:F\text{ \ensuremath{\text{\ensuremath{\sigma}-finite measure on \ensuremath{\left(X,\mathscr{B}\right)}}} s.t. }\label{eq:F2}\\
 & \qquad F\ll\mu,\:T_{\mu}F:=dF/d\mu,\:\left\Vert F\right\Vert _{\mathscr{H}\left(K\right)}=\left\Vert dF/d\mu\right\Vert _{L^{2}\left(\mu\right)}\Big\}.\label{eq:F3}
\end{align}
\end{thm}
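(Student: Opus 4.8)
The plan is to exhibit an explicit unitary isomorphism between $\mathscr{H}\left(K_{\mu}\right)$ and $L^{2}\left(\mu\right)$ that carries each abstract RKHS element to its Radon--Nikodym derivative, and then to read off both the descriptive characterization (\ref{eq:F2})--(\ref{eq:F3}) and the norm identity directly from that isomorphism. First I would invoke the Moore--Aronszajn construction: $\mathscr{H}\left(K_{\mu}\right)$ is the Hilbert completion of the linear span of the kernel sections $K_{A}:=K_{\mu}\left(A,\cdot\right)=\mu\left(A\cap\cdot\right)$, $A\in\mathscr{B}_{fin}$, under the inner product fixed by $\left\langle K_{A},K_{B}\right\rangle _{\mathscr{H}\left(K_{\mu}\right)}=K_{\mu}\left(A,B\right)=\mu\left(A\cap B\right)$, with reproducing property $\left\langle F,K_{A}\right\rangle _{\mathscr{H}\left(K_{\mu}\right)}=F\left(A\right)$.

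Next I would define a map $W$ on the span of kernel sections by $WK_{A}=\chi_{A}\in L^{2}\left(\mu\right)$ and extend it linearly. The key computation is that $W$ is isometric on this dense span, since
\[
\left\langle \chi_{A},\chi_{B}\right\rangle _{L^{2}\left(\mu\right)}=\int_{X}\chi_{A}\chi_{B}\,d\mu=\mu\left(A\cap B\right)=\left\langle K_{A},K_{B}\right\rangle _{\mathscr{H}\left(K_{\mu}\right)}.
\]
Because finite linear combinations of the indicators $\chi_{A}$, $A\in\mathscr{B}_{fin}$, are exactly the $\mu$-integrable simple functions, and these are dense in $L^{2}\left(\mu\right)$ by standard measure theory (using $\sigma$-finiteness of $\mu$, together with the fact that $\left\{ \left|h\right|>\varepsilon\right\} $ has finite $\mu$-measure for every $h\in L^{2}\left(\mu\right)$), the isometry $W$ extends to a unitary isomorphism of $\mathscr{H}\left(K_{\mu}\right)$ onto all of $L^{2}\left(\mu\right)$. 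This is precisely the canonical isometry $T_{\mu}$ referred to just above the theorem.

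I would then transport the identification back. Given $F\in\mathscr{H}\left(K_{\mu}\right)$, set $h:=WF\in L^{2}\left(\mu\right)$. Combining the reproducing property with unitarity of $W$ gives
\[
F\left(A\right)=\left\langle F,K_{A}\right\rangle _{\mathscr{H}\left(K_{\mu}\right)}=\left\langle h,\chi_{A}\right\rangle _{L^{2}\left(\mu\right)}=\int_{A}h\,d\mu,\quad A\in\mathscr{B}_{fin},
\]
which exhibits $F$ as the measure $dF=h\,d\mu$; in particular $F\ll\mu$ and $T_{\mu}F=dF/d\mu=h$. The norm identity $\left\Vert F\right\Vert _{\mathscr{H}\left(K_{\mu}\right)}=\left\Vert h\right\Vert _{L^{2}\left(\mu\right)}=\left\Vert dF/d\mu\right\Vert _{L^{2}\left(\mu\right)}$ is then immediate, and $\sigma$-finiteness of $F$ follows from that of $\mu$: writing $X=\bigcup_{n}X_{n}$ with $\mu\left(X_{n}\right)<\infty$, Cauchy--Schwarz yields $\int_{X_{n}}\left|h\right|\,d\mu\leq\left\Vert h\right\Vert _{L^{2}\left(\mu\right)}\mu\left(X_{n}\right)^{1/2}<\infty$. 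Conversely, any $\sigma$-finite $F$ with $F\ll\mu$ and $dF/d\mu\in L^{2}\left(\mu\right)$ arises as $W^{-1}\left(dF/d\mu\right)$, so the two descriptions coincide.

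The step I expect to demand the most care is confirming that the \emph{abstract} completion elements are genuine $\sigma$-finite measures on $\mathscr{B}$ rather than mere functionals on $\mathscr{B}_{fin}$, and that countable additivity survives the completion. This is exactly what the displayed integral representation $F\left(A\right)=\int_{A}h\,d\mu$ secures, with the Cauchy--Schwarz bound above delivering absolute continuity, $\sigma$-finiteness, and $\sigma$-additivity (via dominated convergence) simultaneously. Everything else is routine Moore--Aronszajn bookkeeping plus the density of simple functions in $L^{2}\left(\mu\right)$.
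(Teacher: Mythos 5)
Your proposal is correct, and it is organized differently from the paper's own argument in a way worth noting. The paper works descriptively: it takes for granted (citing the authors' earlier RKHS work) that elements $F\in\mathscr{H}\left(K_{\mu}\right)$ are $\sigma$-additive set functions, pairs $F$ against the kernel sections $\mu\left(\cdot\cap A\right)$ via the reproducing property, computes $\frac{d\mu\left(\cdot\cap A\right)}{d\mu}=\chi_{A}$, and then verifies that the inner-product formula $\left\langle F,G\right\rangle _{\mathscr{H}\left(K_{\mu}\right)}=\int_{X}\frac{dF}{d\mu}\frac{dG}{d\mu}\,d\mu$ is consistent with the reproducing identity $F\left(A\right)=\left\langle F,\mu\left(\cdot\cap A\right)\right\rangle$. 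You instead build the isomorphism from scratch: Moore--Aronszajn gives $\mathscr{H}\left(K_{\mu}\right)$ as the completion of the span of kernel sections, the assignment $K_{A}\mapsto\chi_{A}$ is isometric because the Gram matrices agree, and --- this is the step the paper never supplies --- density of finitely supported simple functions in $L^{2}\left(\mu\right)$ (via $\sigma$-finiteness and the finite measure of $\left\{ \left|h\right|>\varepsilon\right\}$) makes the extension \emph{onto} $L^{2}\left(\mu\right)$. Surjectivity is exactly what delivers the reverse inclusion, i.e., that every $\sigma$-finite $F\ll\mu$ with $dF/d\mu\in L^{2}\left(\mu\right)$ actually lies in $\mathscr{H}\left(K_{\mu}\right)$; the paper's verification-style proof establishes at best the forward inclusion and the norm identity. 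Your route also replaces the paper's appeal to prior work for $\sigma$-additivity by the concrete representation $F\left(A\right)=\int_{A}h\,d\mu$, from which countable additivity, absolute continuity, and $\sigma$-finiteness of $F$ all follow by dominated convergence and Cauchy--Schwarz. The common core in both arguments is the correspondence between kernel sections and indicator functions; what your version buys is a self-contained proof of both set inclusions in (\ref{eq:F2})--(\ref{eq:F3}). One cosmetic caveat: as stated the theorem says ``measure,'' but the elements of a (real or complex) Hilbert space of this kind must be allowed to be \emph{signed} measures --- your $h=WF$ is an arbitrary element of $L^{2}\left(\mu\right)$ --- which is how the paper itself treats $F$ inside its proof.
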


\begin{proof}
We also included proof details for the conclusions (\ref{eq:F2})-(\ref{eq:F3})
when $K=K_{\mu}$ is specified as in (\ref{eq:F1}).

Recall that 
\begin{equation}
\mu\left(\cdot\cap A\right)\in\mathscr{H}\left(K_{\mu}\right).\label{eq:F4}
\end{equation}
So if $F$ is a signed measure on $\left(X,\mathscr{B}\right)$ and
$F\in\mathscr{H}\left(K_{\mu}\right)$, then we assign the inner product
\begin{equation}
\left\langle F,\mu\left(\cdot\cap A\right)\right\rangle _{\mathscr{H}\left(K_{\mu}\right)}=F\left(A\right),\label{eq:F5}
\end{equation}
using the reproducing property of $\mathscr{H}\left(K_{\mu}\right)$.

From (\ref{eq:F5}), $F$ is a function on $\mathscr{B}$, and we
showed that from the axioms of the RKHS $\mathscr{H}\left(K_{\mu}\right)$
that $F\left(\cdot\right)$ will be $\sigma$-additive, so a signed
measure. Specifically, if $B=\cup_{i}B_{i}$, $B_{i}\in\mathscr{B}$,
$B_{i}\cap B_{j}=\emptyset$ for $i\neq j$, one has 
\begin{equation}
F\left(B\right)=\sum_{i}F\left(B_{i}\right).
\end{equation}

But we also derive the axioms for $\mathscr{H}\left(K_{\mu}\right)$
as follows: 
\begin{equation}
F\ll\mu\rightsquigarrow\frac{dF}{d\mu}=\text{Radon Nikodym derivative}
\end{equation}
In particular if $A\in\mathscr{B}_{fin}$ is fixed, then 
\begin{equation}
\mu\left(\cdot\cap A\right)\ll\mu\label{eq:F8}
\end{equation}
and
\begin{equation}
\frac{d\mu\left(\cdot\cap A\right)}{d\mu}=\chi_{A}\left(\cdot\right).\label{eq:F9}
\end{equation}
To see (\ref{eq:F9}), note that 
\[
\int_{B}\chi_{A}\left(\cdot\right)d\mu=\mu\left(A\cap B\right).
\]

Moreover, for all $F,G\in\mathscr{H}\left(K_{\mu}\right)$, we have
\begin{equation}
\left\langle F,G\right\rangle _{\mathscr{H}\left(K_{\mu}\right)}=\int_{X}\frac{dF}{d\mu}\frac{dG}{d\mu}d\mu.\label{eq:F10}
\end{equation}
The formula (\ref{eq:F10}) offers another way to verify (\ref{eq:F5}).
Indeed,
\begin{eqnarray*}
\text{LHS}_{\left(\ref{eq:F5}\right)} & \underset{\text{by \ensuremath{\left(\ref{eq:F10}\right)}}}{=} & \int_{X}\frac{dF}{d\mu}\left(\cdot\right)\frac{d\mu\left(\cdot\cap A\right)}{d\mu}d\mu\\
 & \underset{\text{by \ensuremath{\left(\ref{eq:F9}\right)}}}{=} & \int_{X}\frac{dF}{d\mu}\left(\cdot\right)\chi_{A}\left(\cdot\right)d\mu\\
 & = & \int_{A}\frac{dF}{d\mu}d\mu\\
 & = & F\left(A\right)=\text{RHS}_{\left(\ref{eq:F5}\right)}.
\end{eqnarray*}
\end{proof}
\textbf{Conclusion. }Fix $\left(X,\mathscr{B},\mu\right)$. Recall:
$\mathscr{H}\left(K_{\mu}\right)$ RKHS of $K_{\mu}$, consisting
of signed measures $F$ s.t. $F\ll\mu$ and $dF/d\mu\in L^{2}\left(\mu\right)$.

\begin{tabular}{>{\centering}p{0.45\columnwidth}>{\centering}p{0.45\columnwidth}}
$F\in\xymatrix{\mathscr{H}\left(K_{\mu}\right)\ar@/^{1.3pc}/^{T_{\mu}}[rr] &  & L^{2}\left(\mu\right)\ar@/^{1.3pc}/^{T_{\mu}^{*}}[ll]}
$ & $T_{\mu}F=dF/d\mu\in L^{2}\left(\mu\right)$\tabularnewline
$T_{\mu}^{*}T_{\mu}=I_{\mathscr{H}\left(K_{\mu}\right)}$, and $T_{\mu}T_{\mu}^{*}=I_{L^{2}\left(\mu\right)}$ & $\left(T_{\mu}^{*}\psi\right)\left(A\right)=\int_{A}\psi d\mu$, $\forall A\in\mathscr{B}$\tabularnewline
\end{tabular}

If $F$ is represented as a signed Stieltjes measure, $F=df$, then
$\nabla^{\left(\mu\right)}f=T_{\mu}F$. 

\renewcommand{\arraystretch}{2}

\begin{table}[H]
\begin{tabular}{|>{\centering}p{0.45\columnwidth}|>{\centering}p{0.45\columnwidth}|}
\hline 
\multicolumn{2}{|c|}{RKHS; different settings for $K_{\mu}\left(A,B\right)=\mu\left(A\cap B\right)$}\tabularnewline
\hline 
\hline 
$\left(X,\mathscr{B}\right)$ general, $\mu\left(A\cap B\right)$ & special measures on $\left(X,\mathscr{B}\right)$\tabularnewline
\hline 
\multirow{2}{0.45\columnwidth}{$X\subset\mathbb{R}$, so $\left[0,1\right]$ or $\left[0,\infty\right]$
etc. $A=\left[0,x\right]$, $B=\left[0,y\right]$, $K_{\mu}\left(A,B\right)=\mu\left(\left[0,x\wedge y\right]\right)$} & \multirow{2}{0.45\columnwidth}{Special case $\mu=\lambda=dx=$ Lebesgue measure, $K_{\lambda}=x\wedge y$}\tabularnewline
 & \tabularnewline
\hline 
\multicolumn{2}{|>{\raggedright}p{1\columnwidth}|}{GENERAL $F\in\mathscr{H}\left(K_{\mu}\right)$ with $\sigma$-finite
measure s.t. $F\ll\mu$, $T_{\mu}F=\frac{dF}{d\mu}$}\tabularnewline
\hline 
\multicolumn{2}{|>{\raggedright}p{0.9\columnwidth}|}{$X\subset\mathbb{R}$, $F=df$ where $f$ is a bounded variation function
on $\mathbb{R}$; Stieltjes measure $df\ll\mu$, $T_{\mu}F=\frac{dF}{d\mu}$}\tabularnewline
\hline 
\end{tabular}

\caption{}
\end{table}

\renewcommand{\arraystretch}{1}

\begin{figure}[H]
\subfloat[{$X=\mathbb{R}$, $B=\left[0,x\right]$, $A=\left[x,y\right]$}]{\includegraphics[width=0.45\columnwidth]{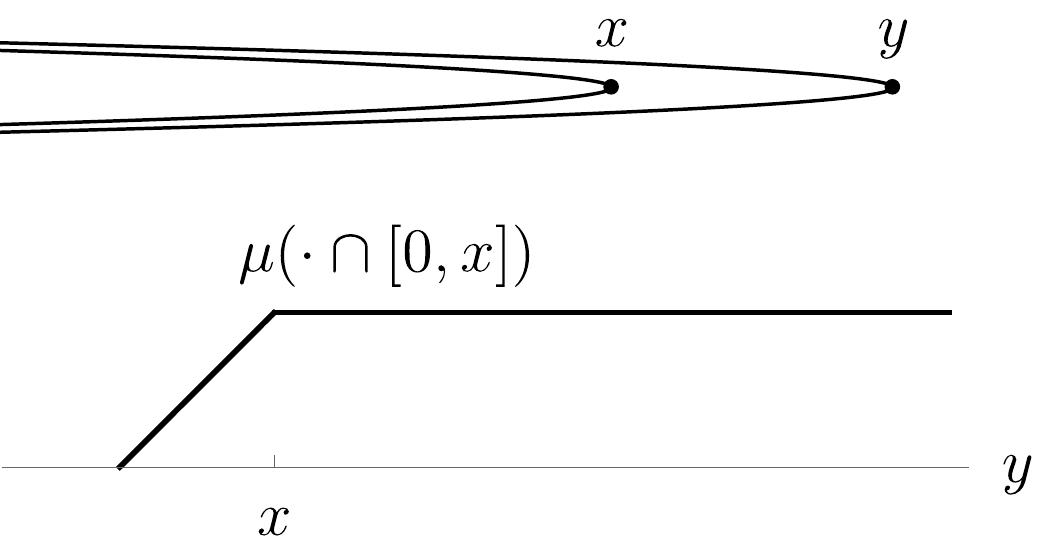}

}\subfloat[$\left(X,\mathscr{B},\mu\right)$ general measure space]{\includegraphics[width=0.45\columnwidth]{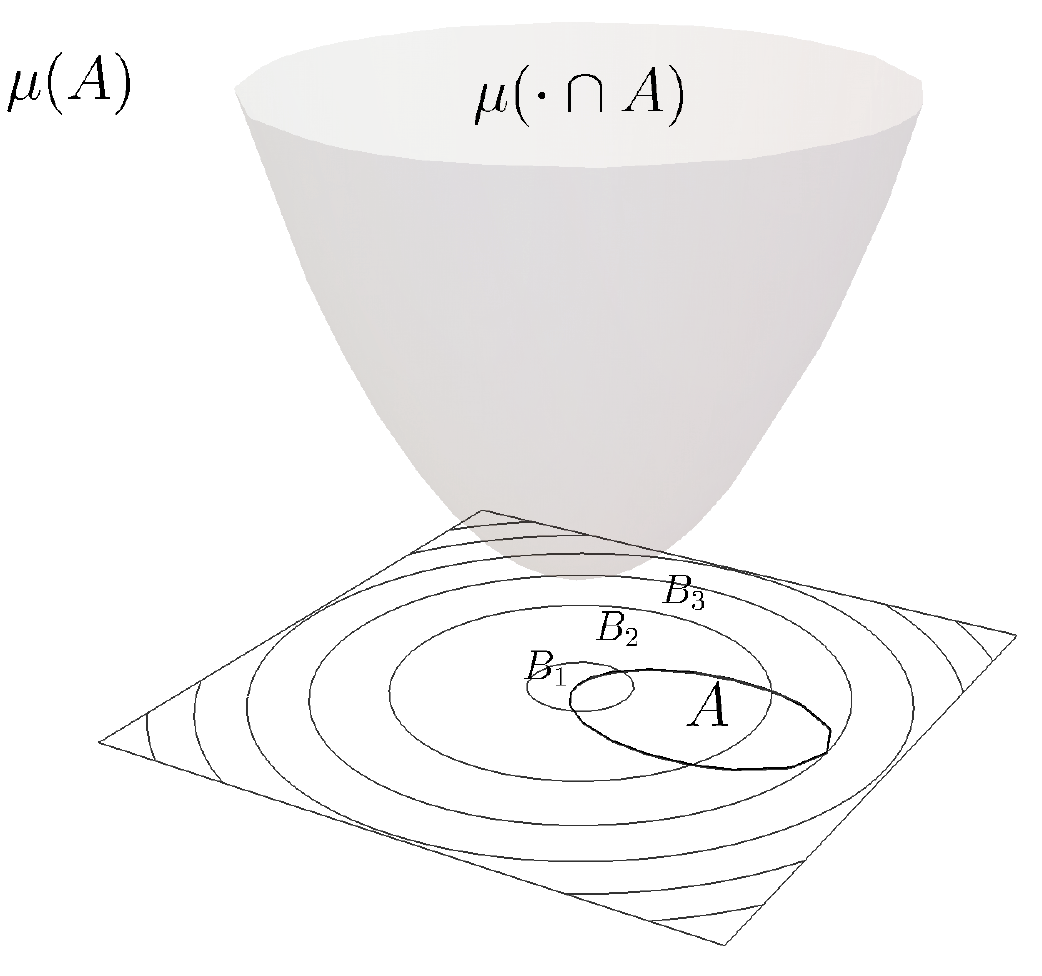}

}

\caption{$\mu\left(\cdot\cap A\right)$}

\end{figure}

\begin{flushleft}
\textbf{Stieltjes measures}
\par\end{flushleft}

It follows from \thmref{F1}, that elements $F$ in $\mathscr{H}\left(K_{\mu}\right)$
are signed measures on $\left(X,\mathscr{B}\right)$ s.t. $F\ll\mu$.
Set $T_{\mu}F:=\frac{dF}{d\mu}$, then 
\begin{equation}
T_{\mu}:\mathscr{H}\left(K_{\mu}\right)\xrightarrow{\;\approx\;}L^{2}\left(\mu\right)\label{eq:Fa1}
\end{equation}
is an \emph{isometric isomophism}, i.e., 
\begin{flushleft}
\begin{equation}
\left\Vert dF/d\mu\right\Vert _{L^{2}\left(\mu\right)}=\left\Vert F\right\Vert _{\mathscr{H}\left(\mu\right)}.\label{eq:Fa2}
\end{equation}
\par\end{flushleft}

In the special case of $\left(X,\mathscr{B}\right)=\left(\mathbb{R},\mathscr{B}\right)$,
or $J$ an interval, e.g., $J=\left[0,1\right]$ or $J=\left[0,\infty\right]$,
the general conclusions (\ref{eq:Fa1})-(\ref{eq:Fa2}) simplify as
follows. 

All signed measures on $\left(\mathbb{R},\mathscr{B}\right)$ have
the form $F=df$ for a bounded variation function $f$ on $\left(X,\mathscr{B}\right)$,
i.e., 
\begin{equation}
\int\varphi dF=\int\varphi df\label{eq:Fa3}
\end{equation}
as a Stieltjes measure, where 
\begin{equation}
df\left(\left[x,y\right]\right)=f\left(y\right)-f\left(x\right)\label{eq:Fa4}
\end{equation}
for intervals and extend to all $B\in\mathscr{B}$. Moreover the operator
$T_{\mu}f=f^{\left(\mu\right)}$ from before then agree (\ref{eq:Fa1})-(\ref{eq:Fa2}),
i.e., 
\begin{equation}
T_{\mu}f=\frac{df}{d\mu}\label{eq:Fa5}
\end{equation}
since 
\begin{equation}
df\in\mathscr{H}\left(K_{\mu}\right)\Longleftrightarrow df\ll\mu,\label{eq:Fa6}
\end{equation}
so that the Radon-Nikodym derivative $\frac{df}{d\mu}$ in (\ref{eq:Fa5})
is well defined and (see (\ref{eq:Fa4}))
\begin{equation}
f\left(y\right)-f\left(x\right)=\int_{X}\left(T_{\mu}f\right)d\mu,\label{eq:Fa7}
\end{equation}
which is the form we used before when $T_{\mu}f=f^{\left(\mu\right)}$.
And (\ref{eq:Fa7})$\Longrightarrow$ 
\begin{equation}
df\left(B\right)=\int_{B}f^{\left(\mu\right)}d\mu\label{eq:Fa8}
\end{equation}
where $df$ is the Stieltjes measure, $B\in\mathscr{B}$, and $\mathscr{B}$
a $\sigma$-algebra.

So we have $T_{\mu}f:=f^{\left(\mu\right)}$, and (\ref{eq:Fa8})
is the standard extension of $df$ defined first on intervals $df\left(\left[x,y\right]\right)=f\left(y\right)-f\left(x\right)$,
and then extended to Borel sets $df\left(B\right)$. Note that every
$\mu$ is a Stieltjes measure $f\left(x\right):=\mu\left([0,x]\right)$,
so of the form $\mu=df$. 

\section{\label{sec:class}A Hilbert space of equivalence classes}

In the earlier literature, authors typically only focus their analysis
on a fixed positive Borel measure $\mu$. This $\mu$ might be compared
to Lebesgue measure $\lambda$. When Stieltjes measures $df$ are
considered, it will then be relative to just this one measure $\mu$.
So when discussing $\nabla^{\left(\mu\right)}f=f^{\left(\mu\right)}$,
then consideration of the equation $df=f^{\left(\mu\right)}d\mu$
is really only picking out one component of $df$. Recall that the
family of Stieltjes measures $df$ account for all Borel measures.
And, in general, a Stieltjes measure will contain other non-zero components.

The focus in section \ref{sec:class} is the following: When we apply
the Jordan decomposition to a fixed Stieltjes measure $df$, then
the part of $df$ that is singular w.r.t. $\lambda$ may contain multiple
components, chosen in such a way that each of these components is
mutually singular w.r.t. the others. The emphasis below is this: We
introduce a Hilbert space $\mathscr{H}_{\text{class}}$ of \textquotedblleft sigma
functions\textquotedblright . Starting with a Stieltjes measure $df$,
we may identify its mutually singular components with orthogonal \textquotedblleft pieces\textquotedblright{}
in the Hilbert space $\mathscr{H}_{\text{class}}$.

We shall consider pairs $\left(f,\mu\right)$ where $f$ is a locally-bounded
variation function, and $\mu$ is a positive non-atomic Borel measure.
Following \cite{MR0282379}, one checks that the $\sim$ as specified
below will be an equivalence relation on pairs: 
\begin{equation}
\left(f_{1},\mu_{1}\right)\sim\left(f_{2},\mu_{2}\right)\;\underset{\left(\text{Defn}\right)}{\text{iff}}\;\exists\,\nu\:\left(\text{positive Borel measure}\right)\label{eq:Fh1}
\end{equation}
such that $\mu_{i}\ll\nu$, and 
\begin{equation}
\frac{df_{1}}{d\mu_{1}}\sqrt{\frac{d\mu_{1}}{d\nu}}=\frac{df_{2}}{d\mu_{2}}\sqrt{\frac{d\mu_{2}}{d\nu}}\quad\text{a.e. \ensuremath{\nu}.}\label{eq:Fh2}
\end{equation}
Moreover the set of such equivalence classes will form a Hilbert space,
with 
\begin{equation}
\left\Vert \text{class}\left(f,\mu\right)\right\Vert _{\mathscr{H}_{\text{class}}}^{2}=\int\left|\frac{df}{d\mu}\right|^{2}d\mu.\label{eq:Fh3}
\end{equation}
One further checks that orthogonality in the inner product in $\mathscr{H}_{\text{class}}$
happens precisely for classes with measures $\mu_{1}$ and $\mu_{2}$
which are mutually singular. 
\begin{thm}
If $f$ is given, locally of bounded variation. In addition, assume
that the sum in (\ref{eq:Fh4}) is finite, so the Stieltjes measure
$df$ is in $\mathscr{H}_{\text{class}}$. It follows that $\mathscr{H}_{\text{class}}$
induces a Hilbert norm as follows: 
\begin{equation}
\left\Vert f\right\Vert _{\mathscr{H}_{\text{class}}}^{2}=\sum_{\mu}\int\left|\frac{df}{d\mu}\right|^{2}d\mu\label{eq:Fh4}
\end{equation}
where the sum on the RHS in (\ref{eq:Fh4}) is over all $\mu$ s.t.
$df|_{supp\left(\mu\right)}\ll\mu|_{supp\left(\mu\right)}$, and distinct
terms in the sum correspond to mutually singular measures $\mu$. 
\end{thm}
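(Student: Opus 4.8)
The plan is to realize the element of $\mathscr{H}_{\text{class}}$ attached to $f$ as an orthogonal sum of its mutually singular Stieltjes components, and then to read off (\ref{eq:Fh4}) as the Pythagorean identity for that decomposition. First I would record the two structural facts already established before the theorem. By the invariance built into the equivalence relation (\ref{eq:Fh1})--(\ref{eq:Fh2}), the number $\int|df/d\mu|^{2}\,d\mu$ depends only on the class of $(f,\mu)$ and not on the particular representative $\mu$ within its mutual-absolute-continuity class; hence each summand on the right of (\ref{eq:Fh4}) is the squared norm $\|\text{class}(f,\mu)\|_{\mathscr{H}_{\text{class}}}^{2}$ of a well-defined vector, by (\ref{eq:Fh3}). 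Second, two such classes are orthogonal in $\mathscr{H}_{\text{class}}$ exactly when the underlying measures are mutually singular: choosing a common dominating $\nu$, the inner product integrand carries the factor $\sqrt{d\mu_{1}/d\nu}\,\sqrt{d\mu_{2}/d\nu}$, which vanishes $\nu$-a.e. precisely when $\mu_{1}\perp\mu_{2}$.

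Next I would produce the decomposition. Passing to the total variation $|df|$ of the signed Stieltjes measure and applying the Lebesgue decomposition theorem to peel off successive mutually singular layers, I obtain a family $\{\mu\}$ of pairwise mutually singular non-atomic measures, indexing the sum in (\ref{eq:Fh4}), together with the associated components $(df)_{\mu}:=(df/d\mu)\,d\mu$ satisfying $(df)_{\mu}\ll\mu$ and $df|_{\mathrm{supp}(\mu)}\ll\mu|_{\mathrm{supp}(\mu)}$. Each $(df)_{\mu}$ represents the vector $\text{class}(f,\mu)$, whose squared norm is the corresponding summand. Since the total sum in (\ref{eq:Fh4}) is assumed finite, only countably many summands are nonzero and the family $\{\text{class}(f,\mu)\}$ is square-summable, hence summable in the Hilbert space $\mathscr{H}_{\text{class}}$; I would then verify that the chosen layers exhaust $df$, so that the partial sums converge in $\mathscr{H}_{\text{class}}$-norm to the single vector $\Phi(f):=\sum_{\mu}\text{class}(f,\mu)$ representing $f$.

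With this in hand the conclusion is immediate: because the indexing measures are pairwise mutually singular, the vectors $\text{class}(f,\mu)$ are pairwise orthogonal, and Pythagoras in $\mathscr{H}_{\text{class}}$ yields
\[
\|f\|_{\mathscr{H}_{\text{class}}}^{2}=\|\Phi(f)\|_{\mathscr{H}_{\text{class}}}^{2}=\sum_{\mu}\|\text{class}(f,\mu)\|_{\mathscr{H}_{\text{class}}}^{2}=\sum_{\mu}\int\Big|\frac{df}{d\mu}\Big|^{2}\,d\mu,
\]
which is (\ref{eq:Fh4}); that this is a genuine Hilbert norm is inherited from the ambient inner-product structure of $\mathscr{H}_{\text{class}}$. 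The step I expect to be the main obstacle is the decomposition itself, namely showing that the mutually singular layers are essentially canonical (unique up to the mutual-absolute-continuity ambiguity that leaves each summand invariant) and, above all, that they exhaust the singular structure of $df$, so that the orthogonal series converges to $\Phi(f)$ rather than to a proper subsum.
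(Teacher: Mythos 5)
Your proposal is correct in substance and follows the same skeleton as the paper's argument, but the two differ in how the crucial decomposition step is discharged. The paper's proof is very short: it verifies exactly your first point --- that each summand $\int\left|df/d\mu\right|^{2}d\mu$ depends only on the equivalence class of $\left(f,\mu\right)$ under (\ref{eq:Fh1})--(\ref{eq:Fh2}), hence is the squared norm of a well-defined vector via (\ref{eq:Fh3}) --- and then delegates everything else (the existence, essential canonicity, and exhaustiveness of the mutually singular decomposition of $df$, together with the resulting orthogonal structure) to Nelson's commutative multiplicity theory, citing \cite[ch 6]{MR0282379}. You instead attempt that step by hand, peeling off mutually singular layers of $\left|df\right|$ by iterated Lebesgue decomposition and then invoking Pythagoras for the resulting orthogonal family. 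This is a legitimate and more self-contained route, and your observation that finiteness of the sum forces countably many nonzero layers and norm-convergence of the orthogonal series is a detail the paper never spells out. However, the obstacle you candidly flag at the end --- that the layers are canonical up to mutual absolute continuity and that they exhaust the singular structure of $df$, so the series converges to the class of $df$ and not a proper subsum --- is precisely the content the paper obtains from the multiplicity-theory citation; to make your version fully rigorous you would either need to prove that exhaustion claim (e.g., by showing the sum in (\ref{eq:Fh4}) is additive under refinement of a mutually singular splitting, so that all admissible decompositions give the same total), or fall back on the same reference. In short: same theorem-level strategy, but the paper buys the hard measure-theoretic step with a citation, while you expose it --- without quite closing it.
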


\begin{proof}
The result is immediate from the discussion above, and \cite[ch 6]{MR0282379},
commutative multiplicity theory. When the function $f$ is fixed,
one checks from the definition of the equivalence relation (\ref{eq:Fh1})--(\ref{eq:Fh2}),
and an easy calculation, that each of the individual terms on the
RHS in (\ref{eq:Fh4}) in the sum-expression only depends on the equivalence
class determined by the measures $\mu$ entering into the summation.
(Note that the Hilbert space $\mathscr{H}_{\text{class}}$ of equivalence
classes is also called the Hilbert space of sigma-functions.)
\end{proof}
\begin{rem}
Nelson's sigma Hilbert space \cite{MR0282379} serves as a tool allowing
us to make precise the formal assertion for Stieltjes measures: 
\begin{equation}
df=\sum_{\mu}\left(\nabla^{\left(\mu\right)}f\right)d\mu\label{eq:Fh5}
\end{equation}
where the measures $\mu$ in (\ref{eq:Fh5}) are specified as in (\ref{eq:Fh4}).
Hence (\ref{eq:Fh5}) is justified when the function $f$ (in (\ref{eq:Fh5}))
yields a finite sum for the RHS in (\ref{eq:Fh4}). 

For the Stieltjes measure $df$, we therefore get the following evaluation
formula: For all $B\in\mathscr{B}_{1}$ (= the Borel $\sigma$-algebra),
we have 
\begin{equation}
df\left(B\right)=\sum_{\mu\in\mathscr{M}_{+}\left(df\right)}\int_{B\cap\text{esssup\ensuremath{\left(\mu\right)}}}\left(\nabla^{\left(\mu\right)}f\right)d\mu,
\end{equation}
where ``$\text{esssup}$'' refers to essential support. 
\end{rem}

The following example illustrates that there are choices of functions
$f$ for which the sum might be infinite.
\begin{example}
Consider $J=\left[0,1\right]$. Let $\mu_{n}=$ Lebesgue measure restricted
to $\left[\frac{1}{2^{n+1}},\frac{1}{2^{n}}\right]$, $n\in\mathbb{N}$.
Let $f\left(x\right)=\sqrt{x}$ over $\left[0,1\right]$, so that
$f'\left(s\right)=\frac{1}{2\sqrt{x}}$. Then, 
\begin{align*}
\left\Vert f\right\Vert _{\mathscr{H}_{\text{class}}}^{2} & \geq\sum_{n=1}^{\infty}\int\left|\frac{df}{d\mu_{n}}\right|^{2}d\mu_{n}\\
 & =\frac{1}{4}\sum_{1}^{\infty}\int_{\frac{1}{2^{n+1}}}^{\frac{1}{2^{n}}}\frac{1}{x}dx\\
 & =\frac{1}{4}\sum_{1}^{\infty}\left(\ln2\right)=\infty.
\end{align*}
\end{example}

\bibliographystyle{amsalpha}
\bibliography{ref}

\end{document}